\newcommand{\ud}{\mathrm{d}}
\newcommand{\ii}{\mathrm{i}}
\newcommand{\cH}{\mathcal{H}}
\newcommand{\hh}{\mathfrak{h}}
\theoremstyle{plain}
\newtheorem{theorem}{Theorem}[section]
\newtheorem{lemma}[theorem]{Lemma}
\newtheorem{corollary}[theorem]{Corollary}
\newtheorem{proposition}[theorem]{Proposition}
\theoremstyle{definition}
\newtheorem{remark}[theorem]{Remark}
\newtheorem*{remark*}{Remark}
\numberwithin{equation}{section}
\begin{document}\hyphenation{Rieman-nian}

\title[Quantum particle across Grushin singularity]
{Quantum particle across Grushin singularity}
\author[M.~Gallone]{Matteo Gallone}
\address[M.~Gallone]{Mathematics Department ``F.~Enriques'', University of Milan \\ via C.~Saldini 50 \\ Milano 20133 (Italy).}
\email{matteo.gallone@unimi.it}
\author[A.~Michelangeli]{Alessandro Michelangeli}
\address[A.~Michelangeli]{Institute of Applied Mathematics and Hausdorff Center for Mathematics, University of Bonn \\ Endenicher Allee 60 \\ 
Bonn 53115  (Germany).}
\email{michelangeli@iam.uni-bonn.de}

\begin{abstract}
A class of models is considered for a quantum particle constrained on degenerate Riemannian manifolds known as Grushin cylinders, and moving freely subject only to the underlying geometry: the corresponding spectral and scattering analysis is developed in detail in view of the phenomenon of transmission across the singularity that separates the two half-cylinders. Whereas the classical counterpart always consists of a particle falling in finite time along the geodesics onto the metric's singularity locus, the quantum models may display geometric confinement, or on the opposite partial transmission and reflection. All the local realisations of the free (Laplace-Beltrami) quantum Hamiltonian are examined as non-equivalent protocols of transmission/reflection and the structure of their spectrum is characterised, including when applicable their ground state and positivity. Besides, the stationary scattering analysis is developed and transmission and reflection coefficients are calculated. This allows to comprehend the distinguished status of the so-called `bridging' transmission protocol previously identified in the literature, which we recover and study within our systematic analysis.
\end{abstract}

\date{\today}

\keywords{Grushin manifold; almost-Riemannian structure; Laplace-Beltrami operator; geometric quantum confinement; quantum transmission across singularity; differential self-adjoint operators; constant-fibre direct sum; Friedrichs extension; Kre\u{\i}n-Vi\v{s}ik-Birman self-adjoint extension theory; modified Bessel function; scattering across singular potential; transmission and reflection coefficients}

\thanks{Partially supported by the MIUR-PRIN 2017 project MaQuMA cod.~2017ASFLJR, and by the Alexander von Humboldt foundation}

\maketitle


\section{Introduction and background. \\ Quantum Hamiltonians of free evolution over Grushin cylinders}

This work ideally completes a cycle we recently started in \cite{GMP-Grushin-2018} and continued in \cite{GMP-Grushin2-2020}, both works in collaboration with E.~Pozzoli, and is part of the fast growing subject of geometric quantum confinement away from the metric's singularity, and transmission across it, for quantum particles on degenerate Riemannian manifolds. Such theme is particularly active with reference to Grushin structures on cylinder, cone, and plane \cite{Nenciu-Nenciu-2009,Boscain-Laurent-2013,Boscain-Prandi-Seri-2014-CPDE2016,Prandi-Rizzi-Seri-2016,Boscain-Prandi-JDE-2016,Franceschi-Prandi-Rizzi-2017,GMP-Grushin-2018,Boscain-Neel-2019,Boscain-Beschastnnyi-Pozzoli-2020}, as well as, more generally, on two-dimensional orientable compact manifolds \cite{Boscain-Laurent-2013}, and $d$-dimensional incomplete Riemannian manifolds \cite{Prandi-Rizzi-Seri-2016}.

In the present setting we are concerned with Grushin cylinders, i.e., Riemannian manifolds $M_\alpha\equiv(M,g_\alpha)$, with parameter $\alpha\in\mathbb{R}$, where
\begin{equation}
	M^\pm \; := \; \mathbb{R}^\pm_x \times \mathbb{S}^1_y \, \qquad \mathcal{Z} \;:=\; \{0\} \times \mathbb{S}^1_y \, , \qquad M\;:=\; M^+ \cup M^- \, 
\end{equation}
and with degenerate Riemannian metric 
\begin{equation}\label{eq:Grushin_Metric}
	g_\alpha \;:=\; \ud x \otimes \ud x +|x|^{-2\alpha} \ud y \otimes \ud y\,.
\end{equation}
Thus, $M_\alpha$ is a two-dimensional manifold built upon the cylinder $\mathbb{R} \times \mathbb{S}^1$, with singularity locus $\mathcal{Z}$ and incomplete Riemannian metric both on the right and the left half-cylinder $\mathbb{R}^\pm \times \mathbb{S}^1$. The values $\alpha=-1$, $\alpha=0$, and $\alpha=1$ select, respectively, the flat cone, the Euclidean cylinder, and the standard `\emph{Grushin cylinder}' \cite[Chapter 11]{Calin-Chang-SubRiemannianGeometry}: in the latter case one has an `\emph{almost-Riemannian structure}' on $\mathbb{R}\times\mathbb{S}^1=M^+\cup\mathcal{Z}\cup M^-$ in the rigorous sense of \cite[Sec.~1]{Agrachev-Boscain-Sigalotti-2008} or \cite[Sect.~7.1]{Prandi-Rizzi-Seri-2016}. In fact, $M_\alpha$ is a hyperbolic manifold whenever $\alpha>0$, with Gaussian (sectional) curvature $K_\alpha(x,y)=-\alpha(\alpha+1)/x^2$.

A pictorial representation of the ``distortion effect'' of the metric in the vicinity of the singularity locus is provided in Figure \ref{fig:cylinders}.

\begin{figure}[t]
\captionsetup[subfigure]{labelformat=empty} 
  \centering
  \subfloat[][$\alpha<0$]
  {\raisebox{1.4cm}{\includegraphics[width=0.3\textwidth]{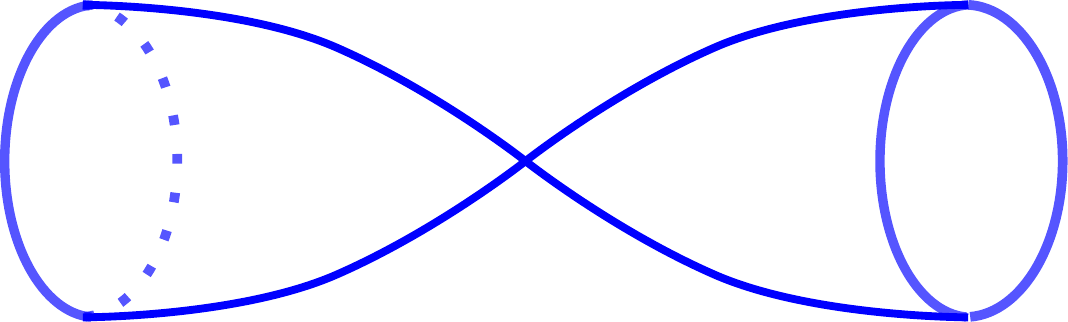} }}
  \quad \subfloat[][$\alpha=0$]
  {\raisebox{1.4cm}{\includegraphics[width=0.3\textwidth]{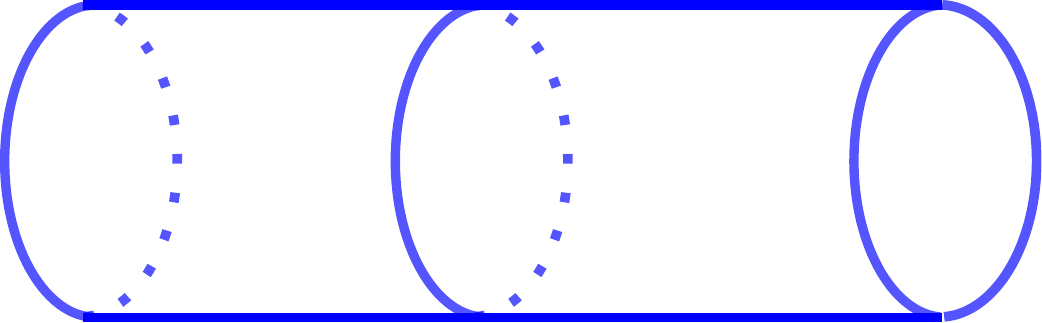} } }
    \quad \subfloat[][$\alpha>0$]
  {\includegraphics[width=0.3\textwidth]{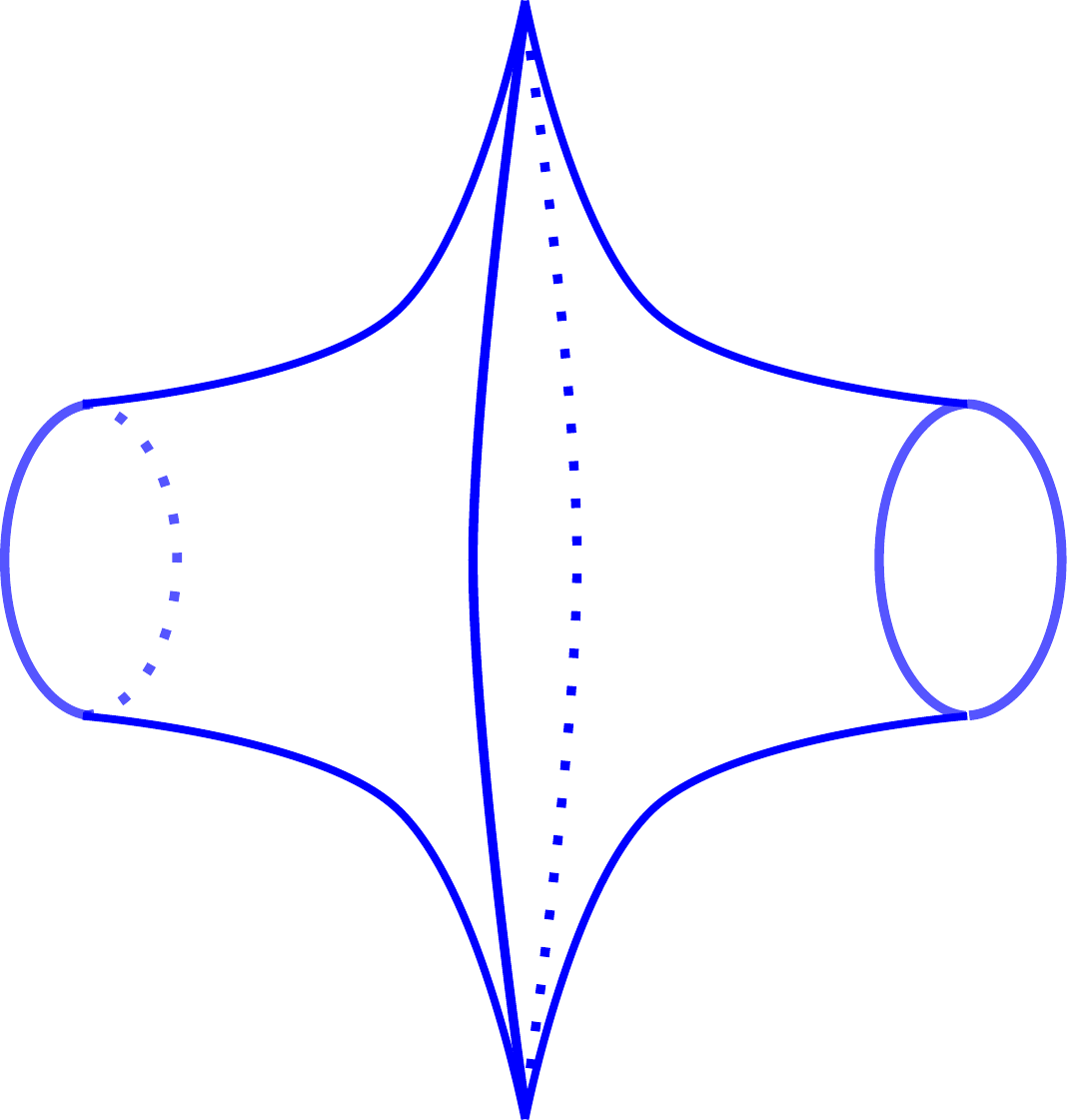} }
  \caption{Manifold $M_\alpha$ for different $\alpha$.}
  \label{fig:cylinders}
\end{figure}

Quantum-mechanically, Grushin cylinders provide the underlying structure for the following prototypical, relevant model: a non-relativistic quantum particle is constrained on $M_\alpha$ and is only subject to the geometric effects due to the non-flat metric, in other words it evolves ``freely'' under the Hamiltonian whose formal action is given by the Laplace-Beltrami operator. The latter, denoted henceforth as $\Delta_\alpha$, has the form
\begin{equation}\label{eq:IntroLaplaceBeltrami}
	\Delta_{\alpha} \; = \; \frac{\partial^2}{\partial x^2} +|x|^{2\alpha}\frac{\partial^2}{\partial y^2} -\frac{\alpha}{|x|} \frac{\partial}{\partial x}\,,
\end{equation}
as is easy to compute (see, e.g. \cite[Sect.~2]{GMP-Grushin-2018}), and acts on functions on $M$.

The Hilbert space for the considered quantum model is therefore
\begin{equation}
	\mathcal{H}_\alpha \;:=\; L^2(M,\ud \mu_\alpha) \, ,
\end{equation}
understood as the completion of $C^\infty_c(M)$ with respect to the scalar product
\begin{equation}
	\langle \psi, \phi \rangle_\alpha \;:=\; \iint_{\mathbb{R} \times \mathbb{S}^1} \overline{\psi(x,y)} \, \phi(x,y) \, \frac{1}{|x|^\alpha} \, \ud x \, \ud y \, ,
\end{equation}
where $\mu_\alpha$ is the Riemannian volume form associated to each $M_\alpha$, namely 
\begin{equation}
	\mu_\alpha\;:=\; \text{vol}_{g_\alpha} \;=\; \sqrt{\det g_\alpha} \, \ud x \wedge \ud y\,.
\end{equation}
Of course, the model becomes quantum-mechanically unambiguous upon realising the formal free Hamiltonian $-\Delta_{\alpha}$ self-adjointly on $\mathcal{H}_\alpha$.

The first (but not only) conceptual relevance of the above physical system is due to the following contrast between the classical and the quantum behaviour of a particle constrained on $M_\alpha$.

\emph{Classically}, for $\alpha>0$ the manifold $M_\alpha$ is geodesically incomplete (indeed, $M_\alpha$ is obviously incomplete as a metric space, which is seen by the non-convergent Cauchy sequence of points $(n^{-1},y_0)\in M$ as $n\to\infty$, then incompleteness follows from a standard Hopf-Rinow theorem \cite[Theorem 2.8, Chapter 7]{DoCarmo-Riemannian}), but not just in the sense that there exist \emph{one} geodesic curve that passes through a given arbitrary point $(x_0,y_0)\in M$ and reaches the boundary $\partial M=\mathcal{Z}$ in finite time in the past or in the future (this is evidently the horizontal line $y=y_0$). One has the much more restrictive feature that, as shown in \cite[Sect.~4.1]{GMP-Grushin-2018}, \emph{all} geodesics passing through $(x_0,y_0)$ at $t=0$ intercept the $y$-axis at finite times $t_\pm$ with $t_-<0<t_+$, with the sole exception of the geodesic line $y=y_0$ along which the boundary is reached only in one direction of time. The classical particle \emph{always} hits the singularity in finite time.

\emph{Quantum mechanically}, on the contrary, a particle whose wave-function is initially supported on one half-cylinder can reach the boundary and trespass it, or instead remain in the original half-cylinder while staying separated from the boundary at all later times, depending on the regimes of $\alpha$.

More precisely, to describe such an alternative one defines the `\emph{minimal free Hamiltonian}' on the space of smooth and compactly supported functions on $M$ 
\begin{equation}\label{Halpha}
	H_\alpha\;:=\; - \Delta_{\mu_\alpha} \, , \qquad \mathcal{D}(H_\alpha) \;:= \; C^\infty_c(M)\,,
\end{equation}
and proves the following \cite{Boscain-Laurent-2013,Boscain-Prandi-JDE-2016,GMP-Grushin-2018}:
  \begin{itemize}
   \item[(i)] if $\alpha\in(-\infty,-3]\cup[1,+\infty)$, then the operator $H_\alpha$ is essentially self-adjoint;
   \item[(ii)] if $\alpha\in(-3,-1]$, then the operator $H_\alpha$ is not essentially self-adjoint and it has deficiency index $2$;
   \item[(iii)] if $\alpha\in(-1,+1)$, then the operator $H_\alpha$ is not essentially self-adjoint and it has infinite deficiency index.
  \end{itemize}
 Let us recall that the deficiency index $\mathsf{d}$ of a densely defined and positive symmetric operator on Hilbert space, as \eqref{Halpha} above, which is technically defined as the cardinal number $\mathsf{d}:=\dim\ker( H_\alpha^*+\mathbbm{1})$, is an indicator that measures the size of the family of all possible self-adjoint realisations of $H_\alpha$, hence all extensions of $H_\alpha$ that have the physical meaning of quantum Hamiltonian. Such extension family is parametrised by $\mathsf{d}^2$ real parameters. Each self-adjoint realisations is thus the generator of a unitary quantum dynamics, and distinct realisations give rise to different unitary evolutions, hence different physics. The case of zero deficiency index corresponds to $H_\alpha$ being essentially self-adjoint, meaning that there is a unique way to extend $H_\alpha$ self-adjointly, which simply consists of taking its operator closure $\overline{H_\alpha}$. In this case there is no ambiguity in the physics described by $H_\alpha$.

Thus, case (i) above is interpreted as the regime of `\emph{geometric quantum confinement}': as $\overline{H_\alpha}$ is self-adjoint with respect to $L^2(M,\ud \mu_\alpha)$, so too are $\overline{H_\alpha^\pm}$ with respect to $L^2(M^\pm,\ud \mu_\alpha)$, having defined $H_\alpha^{\pm}$ analogously to \eqref{Halpha} with domain $C^\infty_c(M^{\pm})$, and with respect to the decomposition
 \begin{equation}\label{eq:decomp+-}
  L^2(M,\ud\mu_\alpha)\;\cong\;L^2(M^-,\ud \mu_\alpha)\oplus L^2(M^+,\ud \mu_\alpha)
 \end{equation}
 the overall unitary propagator is reduced as 
  \begin{equation}
  e^{-\ii t \overline{H_\alpha}}\;=\;e^{-\ii t \overline{H_\alpha^-}}\oplus e^{-\ii t \overline{H_\alpha^+}}\,,\qquad \forall\,t\in\mathbb{R}\,.
 \end{equation}
 Therefore, if a particle is initially prepared in a state $\psi_0\in\mathcal{D}(\overline{H_\alpha})$ with support only within $M^+$, the \emph{unique} solution $\psi\in C^1(\mathbb{R}_t,L^2(M,\ud\mu_\alpha))$ to the Schr\"{o}dinger initial value problem
 \begin{equation}
  \begin{cases}
   \;\ii\partial_t \psi \!\!&=\;\overline{H_\alpha}\,\psi \\
   \;\psi|_{t=0}\!\!&=\;\psi_0
  \end{cases}
 \end{equation}
 remains for all times supported (`confined') in $M^+$, thus never crossing the $y$-axis towards the left half-cylinder. Such confinement is, in a sense, only a consequence of the geometry of $M_\alpha$, because $H_\alpha$ is not qualified by boundary conditions at the singularity and the free Hamiltonian $\overline{H_\alpha}$ is realised canonically upon $H_\alpha$ as its operator closure.

 The regimes (ii) and (iii) listed above produce instead a variety of distinct physical `\emph{protocols of quantum transmission}' across the singularity, depending on the specific boundary conditions of self-adjointness imposed at $\partial M$, and this represents the other relevant feature of the class of models under consideration.

 Such occurrence is in fact a signature of the incompleteness of $M_\alpha$. For, the minimally defined Laplace-Beltrami operator is always essentially self-adjoint on complete Riemannian manifolds, whereas in general essential self-adjointness is broken if the manifold is incomplete (see, e.g., \cite{Masamune-2005} and references therein).

 The emergent physics is particularly rich and interesting in the regime $\alpha\in[0,1)$, owing to the simultaneous infinity of the deficiency index of the minimal operator $H_\alpha$ and singularity of the metric $g_\alpha$. Of course, a large portion of the self-adjoint realisations of $H_\alpha$ in this regime would correspond to Hamiltonians of scarce physical interest, owing to the non-local character of their boundary conditions as $|x|\to 0$, yet there remains an ample class of physically meaningful Hamiltonians, characterised by local boundary conditions at the singularity, which govern the transmission across it.

 In the work \cite{GMP-Grushin2-2020}, together with Pozzoli, we established an extensive and fairly explicit classification of the ``physical'' self-adjoint realisations of the Laplace-Beltrami operator on the manifold $M_\alpha$ in the significant regime $\alpha\in(0,1)$. The result is summarised as follows.


    \begin{theorem}[\cite{GMP-Grushin2-2020}]\label{thm:H_alpha_fibred_extensions}
 Let $\alpha\in[0,1)$. The densely defined and symmetric operator $H_\alpha$ defined in \eqref{Halpha} admits, among others, the following families of self-adjoint extensions with respect to $L^2(M,\ud\mu_\alpha)$:
 \begin{itemize}
  \item \underline{Friedrichs extension}: $H_{\alpha,\mathrm{F}}$;
  \item \underline{Family $\mathrm{I_R}$}: $\{H_{\alpha,\mathrm{R}}^{[\gamma]}\,|\,\gamma\in\mathbb{R}\}$;
  \item \underline{Family $\mathrm{I_L}$}: $\{H_{\alpha,\mathrm{L}}^{[\gamma]}\,|\,\gamma\in\mathbb{R}\}$;
  \item \underline{Family $\mathrm{II}_a$} with $a\in\mathbb{C}$: $\{H_{\alpha,a}^{[\gamma]}\,|\,\gamma\in\mathbb{R}\}$;
  \item \underline{Family $\mathrm{III}$}: $\{H_{\alpha}^{[\Gamma]}\,|\,\Gamma\equiv(\gamma_1,\gamma_2,\gamma_3,\gamma_4)\in\mathbb{R}^4\}$.
 \end{itemize}
 Each member of any such family acts precisely as the differential operator $-\Delta_{\mu_\alpha}$ on a domain of functions $f\in L^2(M,\ud\mu_\alpha)$ satisfying the following properties.
  \begin{itemize}
  \item[(i)] \underline{Integrability and regularity}:
  \begin{equation}\label{eq:DHalpha_cond1}
  \sum_{\pm}\;\iint_{\mathbb{R}_x^\pm\times\mathbb{S}^1_y}\big|(\Delta_{\mu_\alpha}f^\pm)(x,y)\big|^2\,\ud\mu_\alpha(x,y)\;<\;+\infty\,.
 \end{equation}
  \item[(ii)] \underline{Boundary condition}: The limits
 \begin{eqnarray}
  f_0^\pm(y)\!\!&=&\!\!\lim_{x\to 0^\pm}f^\pm(x,y) \label{eq:DHalpha_cond2_limits-1}\\
  f_1^\pm(y)\!\!&=&\!\!\pm(1+\alpha)^{-1}\lim_{x\to 0^\pm}\Big(\frac{1}{\:|x|^\alpha}\,\frac{\partial f(x,y)}{\partial x}\Big) \label{eq:DHalpha_cond2_limits-2}
  \end{eqnarray}
 exist and are finite for almost every $y\in\mathbb{S}^1$, and depending on the considered type of extension, and for almost  every $y\in\mathbb{R}$,
 \begin{eqnarray}
  f_0^\pm(y)\,=\,0 \qquad \quad\;\;& & \textrm{if }\;  f\in\mathcal{D}(H_{\alpha,\mathrm{F}})\,, \label{eq:DHalpha_cond3_Friedrichs}\\
  \begin{cases}
   \;f_0^-(y)= 0  \\
   \;f_1^+(y)=\gamma f_0^+(y)
  \end{cases} & & \textrm{if }\;  f\in\mathcal{D}(H_{\alpha,\mathrm{R}}^{[\gamma]})\,, \\
   \begin{cases}
   \;f_1^-(y)=\gamma f_0^-(y) \\
   \;f_0^+(y)= 0 
  \end{cases} & & \textrm{if }\;  f\in\mathcal{D}(H_{\alpha,\mathrm{L}}^{[\gamma]}) \,, \label{eq:DHalpha_cond3_L}\\
     \begin{cases}
   \;f_0^+(y)=a\,f_0^-(y) \\
   \;f_1^-(y)+\overline{a}\,f_1^+(y)=\gamma f_0^-(y)
  \end{cases} & & \textrm{if }\;  f\in\mathcal{D}(H_{\alpha,a}^{[\gamma]})\,, \label{eq:DHalpha_cond3_IIa} \\
   \begin{cases}
   \;f_1^-(y)=\gamma_1 f_0^-(y)+(\gamma_2+\ii\gamma_3) f_0^+(y) \\
   \;f_1^+(y)=(\gamma_2-\ii\gamma_3) f_0^-(y)+\gamma_4 f_0^+(y)
  \end{cases} & & \textrm{if }\;  f\in\mathcal{D}(H_{\alpha}^{[\Gamma]})\,. \label{eq:DHalpha_cond3_III}
 \end{eqnarray} 
 \end{itemize} 
     Moreover,
 \begin{equation}\label{eq:traceregularity}
  f_0^\pm \in H^{s_{0,\pm}}(\mathbb{S}^1, \ud y)\qquad\textrm{ and }\qquad f_1^\pm\in H^{s_{1,\pm}}(\mathbb{S}^1,\ud y)
 \end{equation}
 with
 \begin{itemize}
 	\item $s_{1,\pm}=\frac{1}{2}\frac{1-\alpha}{1+\alpha}$\qquad\qquad\qquad\qquad\qquad\; for the Friedrichs extension,
 	\item $s_{1,-}=\frac{1}{2}\frac{1-\alpha}{1+\alpha}$, $s_{0,+}=s_{1,+}=\frac{1}{2}\frac{3+\alpha}{1+\alpha}$ \quad for extensions of type $\mathrm{I_R}$,
 	\item  $s_{1,+}=\frac{1}{2}\frac{1-\alpha}{1+\alpha}$, $s_{0,-}=s_{1,-}=\frac{1}{2}\frac{3+\alpha}{1+\alpha}$ \quad for extensions of type $\mathrm{I_L}$,
 	\item $s_{1,\pm}=s_{0,\pm}=\frac{1}{2}\frac{1-\alpha}{1+\alpha}$ \qquad\qquad\qquad \;\;\;\,for extensions of type $\mathrm{II}_a$,
 	\item $s_{1,\pm}=s_{0,\pm}=\frac{1}{2}\frac{3+\alpha}{1+\alpha}$ \qquad\qquad\qquad \;\; for extensions of type $\mathrm{III}$.
 \end{itemize}
\end{theorem}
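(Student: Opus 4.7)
The plan is to reduce the problem to a fibred analysis in the $y$-variable and then apply one-dimensional extension theory to each fibre, finally reassembling the information into the stated local boundary conditions at $\mathcal{Z}$.

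\textbf{Step 1: Fibre decomposition.} Since $H_\alpha$ acts trivially in the $y$-variable as $|x|^{2\alpha}\partial_y^2$, and the measure $\ud\mu_\alpha = |x|^{-\alpha}\ud x\,\ud y$ factorises, I would exploit the Fourier expansion on $\mathbb{S}^1$ to decompose
\begin{equation*}
	L^2(M,\ud\mu_\alpha)\;\cong\;\bigoplus_{k\in\mathbb{Z}} L^2(\mathbb{R}_x\setminus\{0\},|x|^{-\alpha}\ud x)
\end{equation*}
and to write $H_\alpha$ as the constant-fibre direct sum of the one-dimensional symmetric operators
\begin{equation*}
	h_{\alpha,k}\;=\; -\frac{\ud^2}{\ud x^2} +\frac{\alpha}{|x|}\frac{\ud}{\ud x}+k^2|x|^{2\alpha}\,,\qquad \mathcal{D}(h_{\alpha,k})=C^\infty_c(\mathbb{R}\setminus\{0\})\,.
\end{equation*}
Each $h_{\alpha,k}$ is symmetric on $L^2(\mathbb{R}\setminus\{0\},|x|^{-\alpha}\ud x)$ and splits further into a right and a left half-line component, which are coupled only by the choice of matching condition at $x=0$.

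\textbf{Step 2: One-dimensional extension theory fibre by fibre.} For each $k$ I would compute the deficiency subspaces $\ker(h_{\alpha,k}^*\pm\ii)$ by solving the corresponding second-order ODE on each half-line; the solutions are expressible via modified Bessel functions of order $\nu=\frac{1}{2}\frac{1+\alpha}{1-\alpha}$ (after the standard Liouville change of variable), and for $\alpha\in[0,1)$ each half-line has one $L^2$-solution at $0$ and one at $\infty$, so $h_{\alpha,k}$ has deficiency indices $(2,2)$. At this point the Kre\u{\i}n--Vi\v{s}ik--Birman theory classifies all self-adjoint extensions via $2\times 2$ self-adjoint matrices linking the boundary functionals at $0^\pm$, once those functionals are isolated from the $x\to 0$ asymptotics of functions in $\mathcal{D}(h_{\alpha,k}^*)$. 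I would show that every $u\in\mathcal{D}(h_{\alpha,k}^*)$ admits
\begin{equation*}
	u(x)\;=\;u_0^\pm+u_1^\pm\,(\pm x)|x|^\alpha+o(|x|^{1+\alpha})\quad\text{as }x\to 0^\pm\,,
\end{equation*}
so that the natural trace functionals are precisely the $u_0^\pm$ and the rescaled derivatives $\pm(1+\alpha)^{-1}|x|^{-\alpha}u'(x)\big|_{0^\pm}$, matching \eqref{eq:DHalpha_cond2_limits-1}--\eqref{eq:DHalpha_cond2_limits-2}. The Friedrichs extension is characterised by killing $u_0^\pm$, while the other families I--III correspond to the various self-adjoint rank-one or rank-two constraints relating $(u_0^-,u_0^+)$ to $(u_1^-,u_1^+)$, in exact correspondence with \eqref{eq:DHalpha_cond3_Friedrichs}--\eqref{eq:DHalpha_cond3_III}.

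\textbf{Step 3: Reassembly of fibres into extensions of $H_\alpha$.} Each listed family in the statement corresponds to the same matching condition being imposed in \emph{every} Fourier mode $k$; such a uniform, $k$-independent choice defines a self-adjoint extension on the whole $L^2(M,\ud\mu_\alpha)$ by direct sum, and conversely these are the self-adjoint extensions of $H_\alpha$ whose boundary condition is local in $y$. I would verify domain membership by checking that the decay of the Bessel-function tails in $k$ guarantees the $L^2$ integrability condition \eqref{eq:DHalpha_cond1}, and that existence of the pointwise limits \eqref{eq:DHalpha_cond2_limits-1}--\eqref{eq:DHalpha_cond2_limits-2} holds for a.e.~$y\in\mathbb{S}^1$.

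\textbf{Step 4: Trace regularity.} The Sobolev indices $s_{0,\pm},s_{1,\pm}$ in \eqref{eq:traceregularity} would be read off from the large-$k$ asymptotics of the fibre decomposition: the boundary trace of a Fourier mode $k$ gets a factor in $k$ that reflects the power of $|x|$ at which $u_0^\pm$ or $u_1^\pm$ appears in the expansion, and one sees by dimensional analysis that $|k|\sim |x|^{-(1+\alpha)}$ near the singularity, so that each derivative loss corresponds to a factor $|k|^{(1+\alpha)^{-1}}$ in the $\ell^2_k$ norm; summing over the fibres yields the stated $H^{s}(\mathbb{S}^1)$ regularity, with the exponents $\tfrac{1}{2}\tfrac{1-\alpha}{1+\alpha}$ (for the ``regular'' trace $f_1$, absent $f_0$-boundary data) and $\tfrac{1}{2}\tfrac{3+\alpha}{1+\alpha}$ (when both $f_0$ and $f_1$ are non-trivial and the stronger coupling of the boundary condition gives the higher regularity).

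The main obstacle will be the careful identification of the asymptotic expansion of a generic element of $\mathcal{D}(h_{\alpha,k}^*)$ at $x=0$ in a way that is uniform in $k$, since this is what allows one to pass from the fibre-wise Kre\u{\i}n--Vi\v{s}ik--Birman classification to the global, $y$-dependent boundary conditions \eqref{eq:DHalpha_cond3_Friedrichs}--\eqref{eq:DHalpha_cond3_III} and to extract the sharp Sobolev exponents \eqref{eq:traceregularity}. Once that asymptotic analysis is under control, the rest of the proof is an orderly bookkeeping of half-line extension theory and of the summation of Fourier modes.
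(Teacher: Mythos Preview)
Your proposal is correct and follows essentially the same route as the paper (which summarises the argument from \cite{GMP-Grushin2-2020} in Section~\ref{sec:UEProb}): Fourier decomposition in $y$, fibre-wise Kre\u{\i}n--Vi\v{s}ik--Birman extension theory for the resulting one-dimensional operators with deficiency index $2$, identification of the boundary functionals from the short-distance asymptotics, and reassembly via uniform ($k$-independent) boundary conditions to obtain the local families $\mathrm{F},\mathrm{I_R},\mathrm{I_L},\mathrm{II}_a,\mathrm{III}$. The only notable difference is that the paper first applies the unitary multiplication $U_\alpha:f\mapsto |x|^{-\alpha/2}f$ to bring each fibre to standard Schr\"odinger form $-\tfrac{\ud^2}{\ud x^2}+k^2|x|^{2\alpha}+\tfrac{\alpha(2+\alpha)}{4x^2}$ before the Bessel analysis (so the correct Bessel index is $\nu=\tfrac{1+\alpha}{2}$, not your $\tfrac{1}{2}\tfrac{1+\alpha}{1-\alpha}$), but this is a computational convenience rather than a different strategy.
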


Theorem \ref{thm:H_alpha_fibred_extensions} has a partial precursor in the work \cite{Boscain-Prandi-JDE-2016} by Boscaini and Prandi, where a few distinguished realisations (the Friedrichs extension and the one of type-$\mathrm{II}_a$ corresponding to $a=1$ and $\gamma=0$) were identified by direct methods.

Let us stress that \eqref{eq:DHalpha_cond3_Friedrichs}-\eqref{eq:DHalpha_cond3_III} all express \emph{local} boundary conditions.
                         
In short, here is the physical picture emerging from Theorem \ref{thm:H_alpha_fibred_extensions}.
\begin{itemize}
 \item The Friedrichs extension $H_{\alpha,\mathrm{F}}$ models quantum confinement on each half of the Grushin cylinder, with no interaction of the particle with the boundary and no dynamical transmission between the two halves. 
 \item Type-$\mathrm{I_R}$ and type-$\mathrm{I_L}$ extensions model systems with no dynamical transmission across $\mathcal{Z}$, but with possible non-trivial interaction of the quantum particle with the boundary respectively from the right or from the left, with confinement on the opposite side. For instance, a quantum particle governed by $H_{\alpha,\mathrm{R}}^{[\gamma]}$ may `touch' the boundary from the right, but not from the left, and moreover it cannot trespass the singularity region.
 \item Type-$\mathrm{II}_a$ and type-$\mathrm{III}$ extensions model in general, dynamical transmission through the boundary.
\end{itemize}

\section{Main results}

The variety of physically meaningful protocols of quantum transmission emerging from the analysis of Theorem \ref{thm:H_alpha_fibred_extensions} poses a number of  questions that are fundamental in the applications of each such model, and that constitute the goals of the present work.


\textbf{I. Positivity.} Namely, the identification of those Laplace-Beltrami realisations that are non-negative self-adjoint operators on $\cH_\alpha$. Observe that the minimal model $H_\alpha$ introduced in \eqref{Halpha} is non-negative, so here one is inquiring which self-adjoint realisations $\widetilde{H}_\alpha$ preserve non-negativity -- all others creating strictly negative bound states. Apart from its quantum-mechanical relevance in the Schr\"{o}dinger evolution, hence for the \emph{unitary group} $(e^{-\ii t \widetilde{H}_\alpha})_{t\in\mathbb{R}}$, this information is crucial for the associated \emph{semi-group} $(e^{-t \widetilde{H}_\alpha})_{t\geqslant 0}$ for the study of the corresponding heat equation 
\begin{equation}\label{eq:heat}
 \partial_t f\;=\;\Delta_\alpha f\,.
\end{equation}
Among non-negative generators, it then becomes of interest to select those that in addition are `\emph{Markovian}' and hence generate `\emph{Markovian semi-groups}', defined by the property
\begin{equation}
 0\leqslant f\leqslant 1\quad(x,y)\textrm{-a.e.}\qquad\Rightarrow\qquad 0\leqslant e^{-t \widetilde{H}_\alpha}f\leqslant 1\quad(x,y)\textrm{-a.e.},\;\;\;\forall t\geqslant 0\,. 
\end{equation}
Each such Markovian extension $\widetilde{H}_\alpha$ therefore generates a Markov processs $(X_t)_{t\geqslant 0}$, for which it is relevant to inquire the possible stochastic completeness and recurrence. The former property, in particular, expresses the circumstance that the process $(X_t)_{t\geqslant 0}$ has infinite lifespan almost surely, which is interpreted as the fact that along the evolution \eqref{eq:heat} the heat is not absorbed by $\mathcal{Z}$. Such a programme (see, e.g., \cite{Fukushima-Oshima-Takeda}) was carried out in \cite{Boscain-Prandi-JDE-2016} for certain distinguished non-negative realisations of $\Delta_\alpha$ and, in systematic form, is the object of a forthcoming separate work of ours.

\textbf{II. Negative point spectrum.} This concerns the low-energy behaviour of those transmission protocols that produce confinement around the singularity locus $\mathcal{Z}$, and in particular the quantification of the number of negative bound states for those self-adjoint Laplace-Beltrami realisations with negative point spectrum.

\textbf{III. Ground state.} Namely, the quantification for each transmission protocol of the (negative) lowest-energy bound state and its explicit wave function (together with the control of its non-degeneracy). In connection to that, it is of relevance to characterise, at least to estimate the lowest (strictly positive) eigenvalue embedded in the continuum spectrum of the \emph{non-transmitting} protocol, namely the Friedrichs extension $H_{\alpha,\mathrm{F}}$.

\textbf{IV. Scattering.} For those protocols that allow for an actual transmission, a relevant information is the quantification of the transmitted flux of particles across the Grushin singularity, and the reflected flux of particles bouncing backwards, once a given incident flux is injected into the manifold at given positive energy and shot at large distances towards the origin. In particular, one would like to characterise the transmission and reflection coefficients in terms of the energy of the incident particles, including the high- and low-energy regimes.

In view of the above goals, let us present now our main results. First, we do characterise all transmission protocols that are generated by a positive (meaning: non-negative) self-adjoint realisation of $-\Delta_\alpha$.

\begin{theorem}[Positive extensions]\label{thm:positivity}
Let $\alpha\in[0,1)$. With respect to the self-adjoint extensions of the minimal operator $H_\alpha$ classified in Theorem \ref{thm:H_alpha_fibred_extensions}, and in terms of the extension parameters $\gamma$ and $\Gamma$ introduced therein,
\begin{itemize}
		\item the Friedrichs extension $H_{\alpha,\mathrm{F}}$ is non-negative;
		\item extensions in the family $\mathrm{I_R}$, $\mathrm{I_L}$, and $\mathrm{II}_a$, $a\in\mathbb{C}$, are non-negative if and only if $\gamma\geqslant 0$;
		\item extensions in the family $\mathrm{III}$ are non-negative if and only if so is the matrix
		\[
		 \widetilde{\Gamma}\;:=\;\begin{pmatrix}
		  \gamma_1 & \gamma_2+\ii\gamma_3 \\
		  \gamma_2-\ii\gamma_3 & \gamma_4
		 \end{pmatrix},
		\]
                i.e., if and only if $\gamma_1 + \gamma_4 > 0$ and $\gamma_1 \gamma_4\geqslant \gamma_2^2 + \gamma_3^2$.
\end{itemize}
\end{theorem}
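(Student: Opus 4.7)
The overall strategy is to pass to the one-dimensional fibred decomposition underlying Theorem \ref{thm:H_alpha_fibred_extensions}, to re-express the quadratic form of each fibre as a manifestly non-negative bulk integral plus a boundary form in the traces $f_0^\pm,f_1^\pm$, and finally to read off the positivity condition by substituting the boundary relations of each extension family. Since both $-\Delta_{\mu_\alpha}$ and the local boundary conditions \eqref{eq:DHalpha_cond3_Friedrichs}--\eqref{eq:DHalpha_cond3_III} decouple under the Fourier transform in $y$, each extension $\widetilde{H}_\alpha$ is unitarily equivalent to a constant-fibre direct sum $\bigoplus_{k\in\mathbb{Z}}\widetilde{h}_{\alpha,k}$ on $L^2(\mathbb{R},|x|^{-\alpha}\,\ud x)$, and $\widetilde{H}_\alpha\geqslant 0$ iff every fibre is non-negative; the mode $k=0$ is decisive, since the extra term $k^2|x|^{2\alpha}|u(x)|^2$ appearing on fibre $k\neq 0$ only helps positivity and does not couple to the boundary relations.

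On each fibre, integration by parts on $\mathbb{R}^\pm$ combined with the normalisation \eqref{eq:DHalpha_cond2_limits-2} yields
\begin{equation*}
\langle u,\widetilde{h}_{\alpha,k}u\rangle \;=\; \sum_{\pm}\int_{\mathbb{R}^\pm}\!\Bigl(|u'(x)|^2+k^2|x|^{2\alpha}|u(x)|^2\Bigr)\frac{\ud x}{|x|^\alpha}\;+\;B[u],
\end{equation*}
where $B[u]$ is proportional, up to a positive constant fixed by \eqref{eq:DHalpha_cond2_limits-2}, to $u_1^+\overline{u_0^+}+u_1^-\overline{u_0^-}+\mathrm{c.c.}$; the bulk integral is manifestly non-negative. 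Inserting the boundary relations of each family, one finds: $B[u]=0$ for the Friedrichs extension (hence automatic non-negativity); $B[u]$ proportional to $\gamma|u_0^+|^2$ for $\mathrm{I_R}$, to $\gamma|u_0^-|^2$ for $\mathrm{I_L}$, and to $\gamma|u_0^-|^2$ for $\mathrm{II}_a$ (hence non-negativity iff $\gamma\geqslant 0$); and
\begin{equation*}
B[u] \;=\; 2\,\bigl\langle (u_0^-,u_0^+),\,\widetilde{\Gamma}\,(u_0^-,u_0^+)\bigr\rangle_{\mathbb{C}^2}
\end{equation*}
for family $\mathrm{III}$, so that non-negativity is equivalent to $\widetilde{\Gamma}\geqslant 0$, i.e.\ to the stated trace and determinant conditions. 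Sufficiency then follows at once from the non-negativity of the bulk. For necessity, the trace regularity \eqref{eq:traceregularity} allows the construction, for any prescribed boundary data in $\mathbb{C}$ or $\mathbb{C}^2$, of test functions $u$ in the domain of the extension realising those data while making the bulk contribution arbitrarily small via concentration near $\mathcal{Z}$; violation of the positivity condition on $\gamma$ or $\widetilde{\Gamma}$ then produces $\langle u,\widetilde{h}_{\alpha,0}u\rangle<0$.

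\textbf{Main obstacle.} The delicate step is the rigorous justification of the integration by parts identity for $B[u]$: the weight $|x|^{-\alpha}$ is singular at $\mathcal{Z}$, the derivative $u'$ is controlled only in a weighted $L^2$-sense near the origin, and the traces $f_0^\pm,f_1^\pm$ belong to the distinct Sobolev spaces prescribed in \eqref{eq:traceregularity}. Turning the formal computation into a bona fide identity valid on the full domain \eqref{eq:DHalpha_cond1}--\eqref{eq:DHalpha_cond3_III} requires weighted Hardy-type estimates adapted to $\alpha\in[0,1)$, paired with the precise near-origin asymptotic expansions of functions in the deficiency subspaces that underlie the Kre\u{\i}n-Vi\v{s}ik-Birman parametrisation behind Theorem \ref{thm:H_alpha_fibred_extensions}. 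Once these ingredients are in place, the scheme above closes uniformly in $k$ and yields the claimed dichotomy.
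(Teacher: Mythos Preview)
Your approach is correct in outline and constitutes a genuinely different route from the paper's. The paper never computes the quadratic form of the extensions at all. Instead it proceeds spectrally: passing through the unitary equivalence $H_\alpha\cong\mathscr{H}_\alpha=\bigoplus_k A_\alpha(k)$, it (a) solves the negative-eigenvalue problem for $A_\alpha(0)$ explicitly via modified Bessel functions (Proposition~\ref{prop:SpectrumK0}), (b) reads the negative spectrum of $A_\alpha(k)$, $k\neq 0$, off the sign of the Kre\u{\i}n--Vi\v{s}ik--Birman parameters $\beta_k,\tau_k,T_k$ in \eqref{eq:GammaIRIL}--\eqref{eq:IIkTheorem51} (Proposition~\ref{prop:NegativeEigenvaluesKDiffZero}), and then (c) observes that absence of negative spectrum on every fibre is governed by the $k=0$ fibre alone (Corollary~\ref{cor:positivity}). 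Your variational argument, by contrast, avoids both special functions and the Birman-parameter machinery: once the fibre form is split as bulk plus the boundary bilinear form in $(f_0^\pm,f_1^\pm)$, the positivity question reduces to elementary linear algebra on $\mathbb{C}$ or $\mathbb{C}^2$. The paper's route is heavier but delivers the exact eigenvalues and their count as a by-product (this is what feeds Theorems~\ref{thm:MainSpectral} and~\ref{thm:GroundStateCH}); your route is lighter and self-contained for the bare positivity statement.

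Two small points to tighten. First, the overall constant in your boundary form is $(1+\alpha)$ rather than $2$ (this follows directly from the normalisation \eqref{eq:DHalpha_cond2_limits-2}), though of course only the sign matters. Second, ``concentration near $\mathcal{Z}$'' is not quite the right mechanism for the necessity direction: to keep $f_0^\pm$ fixed while driving the bulk to zero you must \emph{spread} the $f_0$-carrying piece (e.g.\ $f_0^+\chi(x/R)$ with $R\to\infty$, which makes the gradient contribution $O(R^{-1-\alpha})$), and separately \emph{localise} the $f_1$-carrying piece (e.g.\ $f_1^+ x^{1+\alpha}\psi(x/\delta)$ with $\delta\to 0$, which makes its gradient contribution $O(\delta^{1+\alpha})$). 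With both scalings the bulk vanishes while $B[u]$ stays fixed at $(1+\alpha)\gamma|f_0^+|^2$, resp.\ $(1+\alpha)\langle\mathbf{f}_0,\widetilde{\Gamma}\mathbf{f}_0\rangle$; the test function so built is easily checked to lie in the operator domain since $1$ and $x^{1+\alpha}$ are exact local solutions of the homogeneous equation on the $k=0$ fibre. With that correction your scheme closes.
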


In practice, the positivity of each realisation is determined by the positivity of the corresponding extension parameter ($\gamma$ or $\widetilde{\Gamma}$), a characterisation that takes such simple form thanks to the efficient choice of the labelling for each extension, made in \cite{GMP-Grushin2-2020} by exploiting the Kre\u{\i}n-Vi\v{s}ik-Birman extension scheme \cite{GMO-KVB2017}.

Next, we describe the spectra of each self-adjoint Hamiltonian of the family of the local transmission protocols described by Theorem \ref{thm:H_alpha_fibred_extensions}. The structure of each spectrum turns out to consist of a common essential spectrum, the non-negative half line, in which an infinity of eigenvalues are embedded, plus a finite negative discrete spectrum.

We shall use the convenient notation
	\begin{equation}
		\lfloor x \rfloor \;:=\; \begin{cases}
			\;n & \textrm{if $x \in (n,n+1]$ for some }n\in\mathbb{N}_0\,, \\
			\;0 & \textrm{if $x \leqslant 0$}\,.
		\end{cases}
	\end{equation}

\begin{theorem}[Spectral analysis of $H_\alpha$]\label{thm:MainSpectral}
Let $\alpha\in[0,1)$. With respect to the self-adjoint extensions of the minimal operator $H_\alpha$ classified in Theorem \ref{thm:H_alpha_fibred_extensions}, and in terms of the extension parameters $\gamma$ and $\Gamma$ introduced therein, each such extension has finite negative discrete spectrum, and essential spectrum equal to $[0,+\infty)$. In particular, any such operator is lower semi-bounded. The essential spectrum contains in each case a (countable) infinity of embedded eigenvalues, with no accumulation, each of finite multiplicity. The number $\mathcal{N}_-(\widetilde{H}_\alpha)$ of negative eigenvalues for each considered extension $\widetilde{H}_\alpha$, counted with their multiplicity, is computed as follows.
\begin{equation}
 \mathcal{N}_-\big(H_{\alpha,\mathrm{F}}\big)\;=\;0\,.
\end{equation}
\begin{equation}
 \mathcal{N}_-\big(H_{\alpha,\mathrm{R}}^{[\gamma]}\big)\;=\;\mathcal{N}_-\big(H_{\alpha,\mathrm{L}}^{[\gamma]}\big)\;=\;2 \lfloor (1+\alpha) |\gamma| \rfloor +1\,,\qquad\gamma<0\,.
\end{equation}
\begin{equation}
 \mathcal{N}_-\big(H_{\alpha,a}^{[\gamma]}\big)\;=\;2\left\lfloor \frac{1+\alpha}{1+|a|^2} |\gamma| \right\rfloor	 +1\,,\qquad \gamma<0\,,\;a\in\mathbb{C}\,.
\end{equation}
  \begin{equation}\label{eq:multnegspec-III}
   \begin{split}
    \mathcal{N}_-\big(H_{\alpha}^{[\Gamma]}\big)\;&=\;\textstyle 2\lfloor -(1+\alpha)\big(\gamma_1+\gamma_4+\sqrt{(\gamma_1-\gamma_4)^2+4 (\gamma_2^2 + \gamma_3^2)}\,\big)\rfloor \\
    & \quad + \textstyle 2\lfloor -(1+\alpha)\big(\gamma_1+\gamma_4-\sqrt{(\gamma_1-\gamma_4)^2+4 (\gamma_2^2 + \gamma_3^2)}\,\big)\rfloor \\
    & \quad + n_0(\Gamma)
   \end{split}
  \end{equation}
   with
   \begin{equation}
    n_0(\Gamma)\,:=\,\begin{cases}
   \;2 & \textrm{if}\quad\gamma_1 \, \gamma_4>\gamma_2^2 + \gamma_3^2 \quad \text{and} \quad \gamma_1+\gamma_4<0 \\
   \;1 & \textrm{if}\quad \gamma_1 \, \gamma_4<\gamma_2^2 + \gamma_3^2\quad\textrm{or}\quad
   \begin{cases}
    \;\gamma_1 \, \gamma_4 =\gamma_2^2 + \gamma_3^2 \\
    \;\gamma_1+\gamma_4<0
   \end{cases} \\
   \;0 & \textrm{if}\quad \gamma_1 \, \gamma_4\geqslant\gamma_2^2 + \gamma_3^2 \quad \text{and} \quad \gamma_1+\gamma_4>0\,.
  \end{cases}
   \end{equation}
\end{theorem}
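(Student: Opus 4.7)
\emph{Plan and fibered decomposition.} The core tool is the constant-fibre direct-sum structure already exploited in \cite{GMP-Grushin2-2020}. Fourier series in the periodic variable $y$ produces a unitary $\mathcal{F}_y:L^2(M,\ud\mu_\alpha)\to\bigoplus_{k\in\mathbb{Z}}L^2(\mathbb{R}_x,|x|^{-\alpha}\ud x)$ that diagonalises every self-adjoint extension from Theorem \ref{thm:H_alpha_fibred_extensions} as $\widetilde{H}_\alpha=\bigoplus_{k\in\mathbb{Z}}\widetilde{h}_{\alpha,k}$, each fibre acting as
\[
 \widetilde{h}_{\alpha,k}\;=\;-\frac{\ud^2}{\ud x^2}+\frac{\alpha}{|x|}\frac{\ud}{\ud x}+k^2|x|^{2\alpha}
\]
on $\mathbb{R}\setminus\{0\}$ with the same local boundary condition at $x=0$ imposed on the $k$-th Fourier coefficients $f_0^\pm(k),f_1^\pm(k)$. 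Consequently the spectrum of $\widetilde{H}_\alpha$ is the closure of $\bigcup_{k\in\mathbb{Z}}\sigma(\widetilde{h}_{\alpha,k})$, with multiplicities summed over $k$.

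\emph{Essential spectrum and fibrewise discreteness.} For $k=0$ the fibre operator has no confining potential and, under the unitary $g(x)=|x|^{-\alpha/2}f(x)$, becomes a 1D Schr\"{o}dinger operator with an inverse-square singularity at the origin and zero potential at infinity; its absolutely continuous spectrum is $[0,+\infty)$ (by a direct Weyl-sequence argument, or by the explicit generalised eigenfunctions produced in the later scattering analysis), with only finitely many negative bound states. For $k\neq 0$ the potential $k^2|x|^{2\alpha}$ is coercive at infinity, so $\widetilde{h}_{\alpha,k}$ has compact resolvent and purely discrete spectrum accumulating only at $+\infty$. Hence $\sigma_{\mathrm{ess}}(\widetilde{H}_\alpha)=[0,+\infty)$, the strictly positive eigenvalues of $\widetilde{H}_\alpha$ are those contributed by $k\neq 0$, each of finite multiplicity (no two modes $k,k'$ can share more than finitely many eigenvalues, by the Weyl-type asymptotics $E_{n,k}\sim c_n|k|^{2\alpha/(1+\alpha)}$), and they have no finite accumulation point.

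\emph{Counting negative eigenvalues.} Fix $k$ and set $E=-\kappa^2<0$. A power-law substitution turns $\widetilde{h}_{\alpha,k}\psi=-\kappa^2\psi$ on each half-line into a modified Bessel equation whose order $\nu$ depends only on $\alpha$; the unique solution lying in $L^2(|x|^{-\alpha}\ud x)$ at $|x|\to\infty$ is built from $K_\nu$, and its small-$|x|$ expansion gives $f_0^\pm,f_1^\pm$ as explicit functions of $\kappa$ and $k$. Substitution into the boundary conditions \eqref{eq:DHalpha_cond3_Friedrichs}--\eqref{eq:DHalpha_cond3_III} produces, for each extension type, a transcendental equation in $\kappa$ whose positive roots are precisely the negative eigenvalues of $\widetilde{h}_{\alpha,k}$. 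In the scalar families $\mathrm{I_R}$, $\mathrm{I_L}$, $\mathrm{II}_a$, strict monotonicity of the Bessel quotient $K_{\nu-1}/K_\nu$ on $(0,+\infty)$ reduces this to counting integers $|k|$ below a threshold that is linear in $|\gamma|$ with slope $(1+\alpha)$ (respectively $(1+\alpha)/(1+|a|^2)$); pairing $\pm k$ and adding the $k=0$ contribution yields the stated formulas. In family $\mathrm{III}$ the two half-lines are coupled: after diagonalising $\widetilde{\Gamma}$ the condition decouples into two scalar problems, one per eigenvalue $\lambda_\pm=\tfrac12\!\left(\gamma_1+\gamma_4\pm\sqrt{(\gamma_1-\gamma_4)^2+4(\gamma_2^2+\gamma_3^2)}\right)$ of $\widetilde{\Gamma}$, producing the two floor contributions in \eqref{eq:multnegspec-III}.

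\emph{Main obstacle.} The subtle step is the $k=0$ correction $n_0(\Gamma)$ in family $\mathrm{III}$: there the transcendental equation degenerates algebraically (the $K_\nu$-quotient tends to a finite non-zero limit as $\kappa\to 0^+$ instead of diverging, as is the case for the non-zero Fourier modes), and the three branches $n_0=0,1,2$ separate exactly at the thresholds $\gamma_1+\gamma_4=0$ and $\gamma_1\gamma_4=\gamma_2^2+\gamma_3^2$ -- the positivity boundaries from Theorem \ref{thm:positivity}. Matching the three-line definition of $n_0(\Gamma)$ requires a careful discussion of strict versus non-strict inequalities together with the signature of $\widetilde{\Gamma}$, and this is where the bulk of the technical work of the proof lies.
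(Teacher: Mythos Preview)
Your overall architecture --- fibred decomposition, essential spectrum from the $k=0$ mode, discrete contributions from $k\neq 0$, and summing the negative eigenvalues fibre by fibre --- is exactly the paper's route (Proposition~\ref{prop:spectra-in-direct-sum} and Theorem~\ref{thm:fibredHalpha-spectrum}). The treatment of the $k=0$ fibre via modified Bessel functions also matches the paper (Proposition~\ref{prop:SpectrumK0}).

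The genuine gap is in your counting for $k\neq 0$. You claim that a power-law substitution turns the eigenvalue equation at energy $-\kappa^2<0$ into a modified Bessel equation. It does not. After the $|x|^{\alpha/2}$ gauge the fibre ODE reads
\[
 g''(x)\;=\;\Big(k^2|x|^{2\alpha}+\frac{\alpha(2+\alpha)}{4x^2}+\kappa^2\Big)g(x)\,,
\]
with three terms of different homogeneity ($|x|^{2\alpha}$, $|x|^{-2}$, constant). No single power-law or scaling substitution collapses these to the two-term Bessel form; this only happens when one term drops out, namely $k=0$ (your case) or $\kappa=0$. Hence the ``transcendental equation via $K_\nu$'' and the ``monotonicity of $K_{\nu-1}/K_\nu$'' argument are simply unavailable for $k\neq 0$ at non-zero energy, and your count of negative eigenvalues per mode has no foundation as written.

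The paper circumvents this by never solving the $k\neq 0$ eigenvalue problem at negative energy. Instead it invokes the Kre\u{\i}n--Vi\v{s}ik--Birman machinery: for each extension the Birman parameter $\beta_k$, $\tau_k$, or $T_k$ (formulas \eqref{eq:GammaIRIL}--\eqref{eq:IIkTheorem51}, computed in \cite{GMP-Grushin2-2020} from the deficiency functions at energy \emph{zero}, where the ODE \emph{is} Bessel-reducible) has the same number of negative eigenvalues as the extension itself. The threshold $|k|<(1+\alpha)|\gamma|$ etc.\ then falls out of the explicit sign of these one- or two-dimensional parameters, with no transcendental equation in sight (Proposition~\ref{prop:NegativeEigenvaluesKDiffZero}). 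This is the missing idea you need.

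A secondary remark: you flag the $n_0(\Gamma)$ correction as ``where the bulk of the technical work lies'', but in the paper this is a routine $2\times 2$ eigenvalue computation (Proposition~\ref{prop:SpectrumK0}(v)). The actual non-trivial ingredient --- the Birman-parameter shortcut for $k\neq 0$ --- is precisely what your proposal replaces with an unworkable step.
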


For those transmission protocols whose Hamiltonian admits negative bound states and hence a negative lowest-energy eigenstate (the ground state), we are able to characterize the ground state's energy and wave function.

The latter shall be expressed in terms of the special function $K_{\frac{1+\alpha}{2}}$, where $K_\nu$ denotes the modified Bessel function 
\begin{equation}
 K_\nu(z)\;=\;{\textstyle\frac{\pi}{\,2\,\sin\nu\pi}}\big(e^{\ii\frac{\pi}{2}\nu}J_{-\nu}(\ii z)-e^{-\ii\frac{\pi}{2}\nu}J_{\nu}(\ii z)\big)\,,
\end{equation}
and $J_\nu$ is the ordinary Bessel function of first kind \cite[Eq.~(9.6.2)-(9.6.3)]{Abramowitz-Stegun-1964}. $K_\nu$ is smooth on $\mathbb{R}^+$; further details will be given in Sect.~\ref{subsec:Spectrum0}.

\begin{theorem}[Ground-states]\label{thm:GroundStateCH}
Let $\alpha\in[0,1)$, $\gamma\in\mathbb{R}$, $a\in\mathbb{C}$, $\Gamma\equiv(\gamma_1,\gamma_2,\gamma_3,\gamma_4)\in\mathbb{R}^4$.
\begin{itemize}
 \item[(i)] When $H_{\alpha,\mathrm{R}}^{[\gamma]}$ (respectively, $H_{\alpha,\mathrm{L}}^{[\gamma]}$) has negative spectrum, i.e., when $\gamma<0$, it has a unique ground state with energy $ E_0\big(H_{\alpha,\mathrm{R}}^{[\gamma]}\big)$ (respectively, $ E_0\big(H_{\alpha,\mathrm{L}}^{[\gamma]}\big)$) given by
 \begin{equation}\label{eq:gsenergyR}
  E_0\big(H_{\alpha,\mathrm{R}}^{[\gamma]}\big)\;=\;E_0\big(H_{\alpha,\mathrm{L}}^{[\gamma]}\big)\;=\; -\left(2\,\frac{\Gamma(\frac{1+\alpha}{2})}{\,\Gamma(-\frac{1+\alpha}{2})}\, \gamma \right)^{\frac{2}{1+\alpha}} 
 \end{equation}
 and non-normalised eigenfunction $\Phi_{\alpha}^{(\mathrm{I_R})}$ (respectively, $\Phi_{\alpha}^{(\mathrm{I_L})}$) given by
 \begin{equation}\label{eq:gs--R}
 \begin{split}
  \Phi_{\alpha}^{(\mathrm{I_R})}(x,y)\;&=\;
  \begin{cases}
   \qquad 0\,, & x<0\,, \\
   \;x^{\frac{1+\alpha}{2}}K_{\frac{1+\alpha}{2}}(x\sqrt{E})\,, & x>0\,,
  \end{cases} \\
 \Phi_{\alpha}^{(\mathrm{I_L})}(x,y)\;&=\;
  \begin{cases}
   \;(-x)^{\frac{1+\alpha}{2}}K_{\frac{1+\alpha}{2}}(-x\sqrt{E})\,, & x<0\,, \\
   \qquad 0\,, & x>0\,,
  \end{cases}
 \end{split}
\end{equation}
  where for short $E\equiv -E_0\big(H_{\alpha,\mathrm{R}}^{[\gamma]}\big)=-E_0\big(H_{\alpha,\mathrm{L}}^{[\gamma]}\big)$.
  \item[(ii)] When $H_{\alpha,a}^{[\gamma]}$ has negative spectrum, i.e., when $\gamma<0$, it has a unique ground state with energy $E_0\big(H_{\alpha,a}^{[\gamma]}\big)$ given by
 \begin{equation}
  E_0\big(H_{\alpha,a}^{[\gamma]}\big)\;=\;-\left(\frac{2\,\Gamma(\frac{1+\alpha}{2})}{(1+|a|^2)\Gamma(-\frac{1+\alpha}{2})} \,\gamma \right)^{\frac{2}{1+\alpha}}
 \end{equation}
 and non-normalised eigenfunction $\Phi_{\alpha}^{(\mathrm{II}_a)}$ given by
 \begin{equation}
  \Phi_{\alpha}^{(\mathrm{II}_a)}(x,y)\;=\;
  \begin{cases}
   \; (-x)^{\frac{1+\alpha}{2}}K_{\frac{1+\alpha}{2}}(-x\sqrt{E})\,, & x<0\,, \\
   \;a x^{\frac{1+\alpha}{2}}K_{\frac{1+\alpha}{2}}(x\sqrt{E})\,, & x>0\,,
  \end{cases}
\end{equation}
  where for short $E\equiv -E_0\big(H_{\alpha,a}^{[\gamma]}\big)$.
 \item[(iii)] When $H_{\alpha}^{[\Gamma]}$ has negative spectrum (see Theorems \ref{thm:positivity}-\ref{thm:MainSpectral}), its ground state energy $E_0\big(H_{\alpha}^{[\Gamma]}\big)$ is given by
 \begin{equation}
  E_0\big(H_{\alpha}^{[\Gamma]}\big)\;=\;-\left(\frac{\,2^\alpha\,\Gamma(\frac{1+\alpha}{2})}{\Gamma(-\frac{1+\alpha}{2})} \Big( \gamma_1+\gamma_4 - \sqrt{(\gamma_1-\gamma_4)^2 +4 (\gamma_2^2+\gamma_3^2)} \,\Big)\right)^{\frac{2}{1+\alpha}}.
 \end{equation}
  The ground state has at most two-fold degeneracy. It is non-degenerate if and only if, under the conditions for its negativity (thus, $\gamma_1+\gamma_4<\sqrt{(\gamma_1-\gamma_4)^2 +4 (\gamma_2^2+\gamma_3^2)}$), additionally one has $\gamma_1\neq\gamma_4$ or $\gamma_2^2+\gamma_3^2>0$, in which case the non-normalised  eigenfunction $\Phi_{\alpha}^{(\mathrm{III})}$ is given by
\begin{equation}
\begin{split}
 & \Phi_{\alpha}^{(\mathrm{III})}(x,y)\;=\;\begin{cases}
  \!\!\begin{array}{l}
   \big({\textstyle \gamma_1-\gamma_4 - \sqrt{(\gamma_1-\gamma_4)^2 +4 (\gamma_2^2+\gamma_3^2)}}\big)\,\times \\
   \qquad \times \,(-x)^{\frac{1+\alpha}{2}}K_{\frac{1+\alpha}{2}}(-x\sqrt{E})\,,
  \end{array}
  & x<0\,, \\
  \;2(\gamma_2-\ii\gamma_3)\,x^{\frac{1+\alpha}{2}}\,K_{\frac{1+\alpha}{2}}(x\sqrt{E})\,, & x>0\,,
 \end{cases}
\end{split}
\end{equation}
where for short $E\equiv -E_0\big(H_{\alpha}^{[\Gamma]}\big)$.
 The ground state is two-fold degenerate if and only if $\gamma_1=\gamma_4<0$ and $\gamma_2=\gamma_3=0$, in which case its eigenspace is spanned by the non-normalised eigenfunctions $\Phi_{\alpha,c_1,c_2}^{(\mathrm{III})}$ given by
  \begin{equation}
  \Phi_{\alpha,c_1,c_2}^{(\mathrm{III})}(x,y)\;=\;
  \begin{cases}
   \; c_1 (-x)^{\frac{1+\alpha}{2}}K_{\frac{1+\alpha}{2}}(-x\sqrt{E})\,, & x<0\,, \\
   \; c_2\, x^{\frac{1+\alpha}{2}}K_{\frac{1+\alpha}{2}}(x\sqrt{E})\,, & x>0\,,
  \end{cases}
  \end{equation}
 where $c_1,c_2\in\mathbb{C}$, and again $E\equiv -E_0\big(H_{\alpha}^{[\Gamma]}\big)$.
\end{itemize}
\end{theorem}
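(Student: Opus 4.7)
The strategy is to exploit the translation invariance of every extension in Theorem \ref{thm:H_alpha_fibred_extensions} in the $y$-variable, perform a Fourier reduction in $y$, and solve explicitly in the zero-momentum sector where the ground state must live. Since all boundary conditions \eqref{eq:DHalpha_cond3_Friedrichs}--\eqref{eq:DHalpha_cond3_III} act pointwise in $y$, each self-adjoint extension $\widetilde{H}_\alpha$ decomposes as a constant-fibre direct sum $\bigoplus_{k\in\mathbb{Z}}\widetilde{H}_\alpha^{(k)}$, where $\widetilde{H}_\alpha^{(k)}$ acts on $L^2(\mathbb{R}\setminus\{0\},|x|^{-\alpha}\,\mathrm{d}x)$ as $-\partial_x^2+(\alpha/|x|)\partial_x+k^2|x|^{2\alpha}$ with the \emph{same} boundary condition at $x=0$ as $\widetilde{H}_\alpha$. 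The term $k^2|x|^{2\alpha}\geqslant 0$ is a positive perturbation, so a min-max comparison gives $\inf\sigma(\widetilde{H}_\alpha^{(k)})\geqslant \inf\sigma(\widetilde{H}_\alpha^{(0)})$ for all $k$, strictly for $k\neq 0$. Consequently the ground state of $\widetilde{H}_\alpha$, whenever it exists, lies in the $k=0$ fibre.

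In the $k=0$ sector the eigenvalue problem $-f''+(\alpha/|x|)f'=-Ef$ (with $E>0$) reduces, via the substitution $f(x)=|x|^{(1+\alpha)/2}u(|x|\sqrt{E})$, to the modified Bessel equation $t^2u''+tu'-(t^2+\nu^2)u=0$ of order $\nu=(1+\alpha)/2$. On each half-line the only solution belonging to $L^2(|x|^{-\alpha}\,\mathrm{d}x)$ at infinity is $K_\nu$, so the bound-state candidates are $c_\pm\,|x|^{(1+\alpha)/2}K_{(1+\alpha)/2}(|x|\sqrt{E})$ on $x\gtrless 0$, which are precisely the wave functions appearing in the statement. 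From the small-argument expansion $K_\nu(z)\sim\Gamma(\nu)\,2^{\nu-1}z^{-\nu}$, the identity $\bigl(z^\nu K_\nu(z)\bigr)'=-z^\nu K_{\nu-1}(z)$, the reflection $K_{-\nu}=K_\nu$, and the functional relation $\Gamma\bigl(\tfrac{1-\alpha}{2}\bigr)=-\tfrac{1+\alpha}{2}\,\Gamma\bigl(-\tfrac{1+\alpha}{2}\bigr)$, an elementary computation of the limits \eqref{eq:DHalpha_cond2_limits-1}--\eqref{eq:DHalpha_cond2_limits-2} yields the master proportionality
\[
 f_1^\pm \;=\; \frac{1}{2^{1+\alpha}}\,\frac{\Gamma\bigl(-\tfrac{1+\alpha}{2}\bigr)}{\Gamma\bigl(\tfrac{1+\alpha}{2}\bigr)}\,E^{(1+\alpha)/2}\,f_0^\pm
\]
valid whenever $f^\pm$ is a multiple of the above Bessel-$K$ solution on the corresponding half-cylinder.

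Inserting this into each family's boundary condition reduces the eigenvalue problem to a linear homogeneous system in $(c_-,c_+)$: for types $\mathrm{I_R}$, $\mathrm{I_L}$, $\mathrm{II}_a$ it is a single scalar relation between $E^{(1+\alpha)/2}$ and the extension parameter $\gamma$, whose unique positive root gives \eqref{eq:gsenergyR} and its type-$\mathrm{II}_a$ analogue (the sign constraint $\gamma<0$ being necessary and sufficient for a positive $E$ to exist, in accord with Theorems \ref{thm:positivity}--\ref{thm:MainSpectral}); for type $\mathrm{III}$ the system reads $\bigl(\widetilde{\Gamma}-\lambda(E)\,I\bigr)\binom{f_0^-}{f_0^+}=0$ with $\lambda(E)=\tfrac{1}{2^{1+\alpha}}\tfrac{\Gamma(-(1+\alpha)/2)}{\Gamma((1+\alpha)/2)}\,E^{(1+\alpha)/2}$, so nontrivial solvability forces $\lambda(E)$ to be an eigenvalue of $\widetilde{\Gamma}$. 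Selecting the lower eigenvalue $\gamma_1+\gamma_4-\sqrt{(\gamma_1-\gamma_4)^2+4(\gamma_2^2+\gamma_3^2)}$ (the only choice giving a \emph{minimal} $E>0$ when negative spectrum occurs) and inverting $E\mapsto\lambda(E)$ produces the displayed expression for $E_0\bigl(H_\alpha^{[\Gamma]}\bigr)$.

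Finally, multiplicity is read off from the dimension of the kernel of the boundary system. Since only one $L^2$-admissible solution exists on each half-line, that kernel has dimension at most two; it is one-dimensional, giving a simple ground state, in Families $\mathrm{I}$, $\mathrm{II}_a$, and generic $\mathrm{III}$, and it is two-dimensional precisely when the coefficient matrix $\widetilde\Gamma-\lambda(E)\,I$ vanishes identically---that is, when $\widetilde\Gamma$ is a scalar multiple of the identity with negative scalar, i.e.\ $\gamma_1=\gamma_4<0$ and $\gamma_2=\gamma_3=0$---yielding the two-parameter family $\Phi_{\alpha,c_1,c_2}^{(\mathrm{III})}$. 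The genuinely delicate step is the Family $\mathrm{III}$ analysis: tracking the $\Gamma$-function factors through the $2\times 2$ diagonalisation so that the compact radical form of $E_0\bigl(H_\alpha^{[\Gamma]}\bigr)$ emerges, and cleanly separating the generic simple ground state from the exceptional two-fold degenerate case by inspecting the rank of $\widetilde\Gamma-\lambda(E)\,I$.
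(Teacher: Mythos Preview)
Your plan is correct and mirrors the paper's own argument: Fourier-decompose in $y$, argue that the ground state lives in the $k=0$ sector by comparing the fibre operators, solve the $k=0$ eigenvalue problem via the modified Bessel equation, and match the boundary data \eqref{eq:DHalpha_cond2_limits-1}--\eqref{eq:DHalpha_cond2_limits-2} to the extension conditions. The only cosmetic difference is that the paper first passes to the unitarily equivalent picture $g=|x|^{-\alpha/2}f$ (so the fibre ODE becomes $-g''+\tfrac{\alpha(2+\alpha)}{4x^2}g=-Eg$ with solution $\sqrt{|x|}\,K_\nu$, Proposition~\ref{prop:SpectrumK0}) and then transforms back, whereas you work directly with $f$ and get $|x|^{(1+\alpha)/2}K_\nu$ at once; the two computations are equivalent and your ``master proportionality'' coincides with the ratio $g_{E,1}^\pm/g_{E,0}^\pm$ from \eqref{eq:gE_boudaryvalues}.

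Two small points deserve tightening. First, the ``min-max comparison'' step relies on a domain (or form-domain) inclusion $\mathcal{D}\big(\widetilde{H}_\alpha^{(k)}\big)\subset\mathcal{D}\big(\widetilde{H}_\alpha^{(0)}\big)$: since $k^2|x|^{2\alpha}$ is unbounded at infinity, this is not entirely automatic, and the paper supplies it explicitly in Lemma~\ref{lem:OrderRelationOps} by quoting the domain characterisations from \cite{GMP-Grushin2-2020} and \cite{Bruneau-Derezinski-Georgescu-2011}. Second, in your type-$\mathrm{III}$ step the eigenvalues of $\widetilde{\Gamma}$ are $\tfrac{1}{2}\bigl(\gamma_1+\gamma_4\pm\sqrt{(\gamma_1-\gamma_4)^2+4(\gamma_2^2+\gamma_3^2)}\bigr)$, not the quantity you wrote without the $\tfrac12$; this missing factor is exactly what turns the $2^{1+\alpha}$ in your $\lambda(E)$ into the $2^{\alpha}$ appearing in the displayed formula for $E_0\big(H_\alpha^{[\Gamma]}\big)$.
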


 The Friedrichs extension $H_{\alpha,\mathrm{F}}$ is not covered by Theorem \ref{thm:GroundStateCH} because it has no negative spectrum, its spectrum being $[0,+\infty)$ and purely essential. It is of interest, nevertheless, to get information on the first (lowest) energy eigenstate of $H_{\alpha,\mathrm{F}}$ embedded in the essential spectrum (as described in Theorem \ref{thm:MainSpectral}). We find the following.

  \begin{proposition}\label{prop:Friedrichs-groundstate}
  For given $\alpha\in[0,1)$, the (lowest) energy eigenstate of $H_{\alpha,\mathrm{F}}$ has eigenvalue $E_0(H_{\alpha,\mathrm{F}})$ embedded in $[0,+\infty)$, which is two-fold degenerate, and is estimated as
 \begin{equation}\label{eq:estimate-Fgroundstate}
	(1+\alpha)\left({\textstyle\frac{2+\alpha}{4} }\right)^{\frac{\alpha}{1+\alpha}}  \; \leqslant \;    E_0(H_{\alpha,\mathrm{F}}) \; \leqslant \;  \frac{2^{\frac{1-\alpha}{1+\alpha}} (1+\alpha)^{\frac{1+3\alpha}{1+\alpha}}}{\alpha^{\frac{\alpha}{1+\alpha}} \Gamma(\frac{3+\alpha}{1+\alpha})}\,.
\end{equation}
  In particular, $E_0(H_{\alpha,\mathrm{F}})$ is strictly positive, and the lowest spectral point $0$ in the spectrum of $H_{\alpha,\mathrm{F}}$ is not an eigenvalue.  
 \end{proposition}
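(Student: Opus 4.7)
The plan is to reduce $H_{\alpha,\mathrm{F}}$ to a one-parameter family of one-dimensional Schr\"odinger operators via two successive decompositions and then to bound the bottom of the discrete part of their spectra. First I would exploit that the Friedrichs boundary condition $f_0^\pm=0$ completely decouples the two halves, writing $H_{\alpha,\mathrm{F}}=H_{\alpha,\mathrm{F}}^+\oplus H_{\alpha,\mathrm{F}}^-$ with the two summands unitarily equivalent under $x\mapsto-x$. Next I would use the invariance under $y$-translations on $\mathbb{S}^1$ to Fourier-decompose $H_{\alpha,\mathrm{F}}^+=\bigoplus_{k\in\mathbb{Z}} h_k$, and apply the multiplication unitary $U:L^2(\mathbb{R}^+,x^{-\alpha}\ud x)\to L^2(\mathbb{R}^+,\ud x)$, $(Uf)(x)=x^{-\alpha/2}f(x)$, which conjugates each fiber into the Schr\"odinger operator
\[
 \tilde h_k\;=\;-\frac{\ud^2}{\ud x^2}+V_k(x)\,,\qquad V_k(x)\;=\;\frac{\alpha(\alpha+2)}{4\,x^2}+k^2\,x^{2\alpha}\,,
\]
on $L^2(\mathbb{R}^+,\ud x)$ with Friedrichs boundary at $x=0$.

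I would then analyse the modes. For $k=0$ the pure Bessel operator $-\partial_x^2+\frac{\alpha(\alpha+2)}{4x^2}$ has purely absolutely continuous spectrum $[0,\infty)$ and no $L^2$-eigenfunction, as a direct Frobenius analysis of the zero-energy ODE confirms. For $|k|\geq 1$ the potential $V_k$ is confining, so $\tilde h_k$ has compact resolvent and a strictly positive ground-state energy; monotonicity of $V_k$ in $k^2$ gives $E_0(\tilde h_k)\geq E_0(\tilde h_1)$, so the minimum is attained at $|k|=1$, which produces the stated two-fold degeneracy from the pair $k=\pm 1$. Consequently $E_0(H_{\alpha,\mathrm{F}})=E_0(\tilde h_1)$, and the strict positivity (established below) precludes $0$ from being an eigenvalue.

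For the lower bound I would use $-\partial_x^2\geq 0$ in form sense to obtain $\tilde h_1\geq V_1$, so $E_0(\tilde h_1)\geq\inf_{x>0}V_1(x)$. Elementary calculus gives the unique critical point $x_\ast=(\frac{2+\alpha}{4})^{1/(2(1+\alpha))}$ and
\[
 V_1(x_\ast)\;=\;(1+\alpha)\Big(\frac{2+\alpha}{4}\Big)^{\!\alpha/(1+\alpha)}\!,
\]
matching the claimed lower estimate. For the upper bound I would apply the min-max principle with a trial function in the Friedrichs form domain that exhibits the correct Frobenius behaviour $u(x)\sim x^{(2+\alpha)/2}$ as $x\to 0^+$ and WKB-type decay $\sim\mathrm{e}^{-x^{1+\alpha}/(1+\alpha)}$ at infinity. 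A natural one-parameter ansatz is $u_\lambda(x)=x^{(2+\alpha)/2}\mathrm{e}^{-\lambda x^{1+\alpha}}$: the substitution $t=2\lambda x^{1+\alpha}$ reduces both $\|u_\lambda\|^2$ and $\langle u_\lambda,\tilde h_1 u_\lambda\rangle$ to $\Gamma$-function integrals, so the Rayleigh quotient becomes a rational expression in $\lambda$; optimising in $\lambda$ and simplifying via Gamma identities should yield precisely the stated constant, the $\Gamma(\frac{3+\alpha}{1+\alpha})$ factor emerging naturally from the normalisation integral.

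The main obstacle will be the upper bound: arranging the trial function so that its optimised Rayleigh quotient matches \emph{exactly} the announced constant $\frac{2^{(1-\alpha)/(1+\alpha)}(1+\alpha)^{(1+3\alpha)/(1+\alpha)}}{\alpha^{\alpha/(1+\alpha)}\Gamma((3+\alpha)/(1+\alpha))}$. The lower bound is immediate once the reduction to $\tilde h_1$ is in place; the upper-bound constant instead requires the right scaling (likely dictated by an $x_\ast$-adapted choice of $\lambda$) and careful bookkeeping with the Gamma-function algebra, and any slack in the ansatz would only weaken the estimate.
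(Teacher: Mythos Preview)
Your approach is essentially identical to the paper's: the same fibre reduction to half-line Schr\"odinger operators $\tilde h_k$, the same identification of $|k|=1$ as the lowest contributing mode, the lower bound via $\inf_{x>0}V_1(x)$ (which is exactly the paper's preserved lower bound \eqref{eq:lowerboundAak} at $k=1$), and the variational upper bound with the very same trial family $u_\lambda(x)=x^{1+\alpha/2}e^{-\lambda x^{1+\alpha}}$, whose optimised Rayleigh quotient the paper computes to be precisely the stated constant.

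One bookkeeping point to watch: having first split $H_{\alpha,\mathrm{F}}=H^+_{\alpha,\mathrm{F}}\oplus H^-_{\alpha,\mathrm{F}}$ and then argued a two-fold degeneracy on $H^+_{\alpha,\mathrm{F}}$ from $k=\pm 1$, your own decomposition, taken literally, produces a $2\times 2$-fold degeneracy on the full operator once you restore the $H^-$ summand; be sure to reconcile this with the claimed multiplicity when you write up the degeneracy count.
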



 Figure \ref{fig:variational-estimate} shows the effectiveness and narrowness of the bounds \ref{eq:estimate-Fgroundstate} for all admissible values of $\alpha$.


The structure of the ground state wave functions described in Theorem \ref{thm:GroundStateCH} is qualitatively the same for each model and consists of a behaviour $|x|^{\frac{1+\alpha}{2}}K_{\frac{1+\alpha}{2}}(|x|)$, on both half lines when applicable, while being constant in $y$. The latter feature, as will emerge from next sections' analysis, is due to the compactness of the $y$-variable in $M$: the lowest energy level of the Hamiltonian is a contribution from the ``zero-th'' mode of functions in $y$, the constant-in-$y$ functions. The function $|x|^{\frac{1+\alpha}{2}}K_{\frac{1+\alpha}{2}}(|x|)$ is localised around $x=0$ with exponential fall off at infinity, thus expressing the localisation of the ground states around the Grushin singularity of the manifold. The $\sqrt{E}$-factor in the argument of the Bessel function does not affect such conclusion. In fact, irrespective of $\alpha$, given any negative number $-E$ there is one model out of each family of Theorem \ref{thm:H_alpha_fibred_extensions} with ground state energy level precisely equal to $-E$: indeed, it is always possible to choose the extension parameters $\gamma=\gamma^{(\mathrm{I})}$ for $\mathrm{I_R}$ or $\mathrm{I_L}$,  $\gamma=\gamma^{(\mathrm{II}_a)}$ for $\mathrm{II}_a$, and $\Gamma$ for $\mathrm{III}$ such that
\[
 \textstyle\gamma^{(\mathrm{I})}\;=\;\frac{1}{\,1+|a|^2}\,\gamma^{(\mathrm{II}_a)}\;=\;2^{\alpha-1}\big(\gamma_1+\gamma_4 - \sqrt{(\gamma_1-\gamma_4)^2 +4 (\gamma_2^2+\gamma_3^2)}\big)\;=:\;\theta\;<0\,,
\]
in which case Theorem \ref{thm:GroundStateCH} implies 
\[
 -E\;\equiv\;E_0\big(H_{\alpha,\mathrm{R}}^{[ \gamma^{(\mathrm{I})}]}\big)\;=\;E_0\big(H_{\alpha,a}^{[\gamma^{(\mathrm{II}_a)}]}\big)\;=\;E_0\big(H_{\alpha}^{[\Gamma]}\big)\;=\;-\textstyle\left(\frac{\,2\,\theta\,\Gamma(\frac{1+\alpha}{2})}{\,\Gamma(-\frac{1+\alpha}{2})}\right)^{\frac{2}{1+\alpha}}.
\]
In particular, we observe (Figure \ref{fig:3}) that at larger $\alpha$'s the delocalisation away from $x=0$ becomes more pronounced, as the Grushin metric becomes more singular.

\begin{figure}[t!]
	\includegraphics[width=6.5cm]{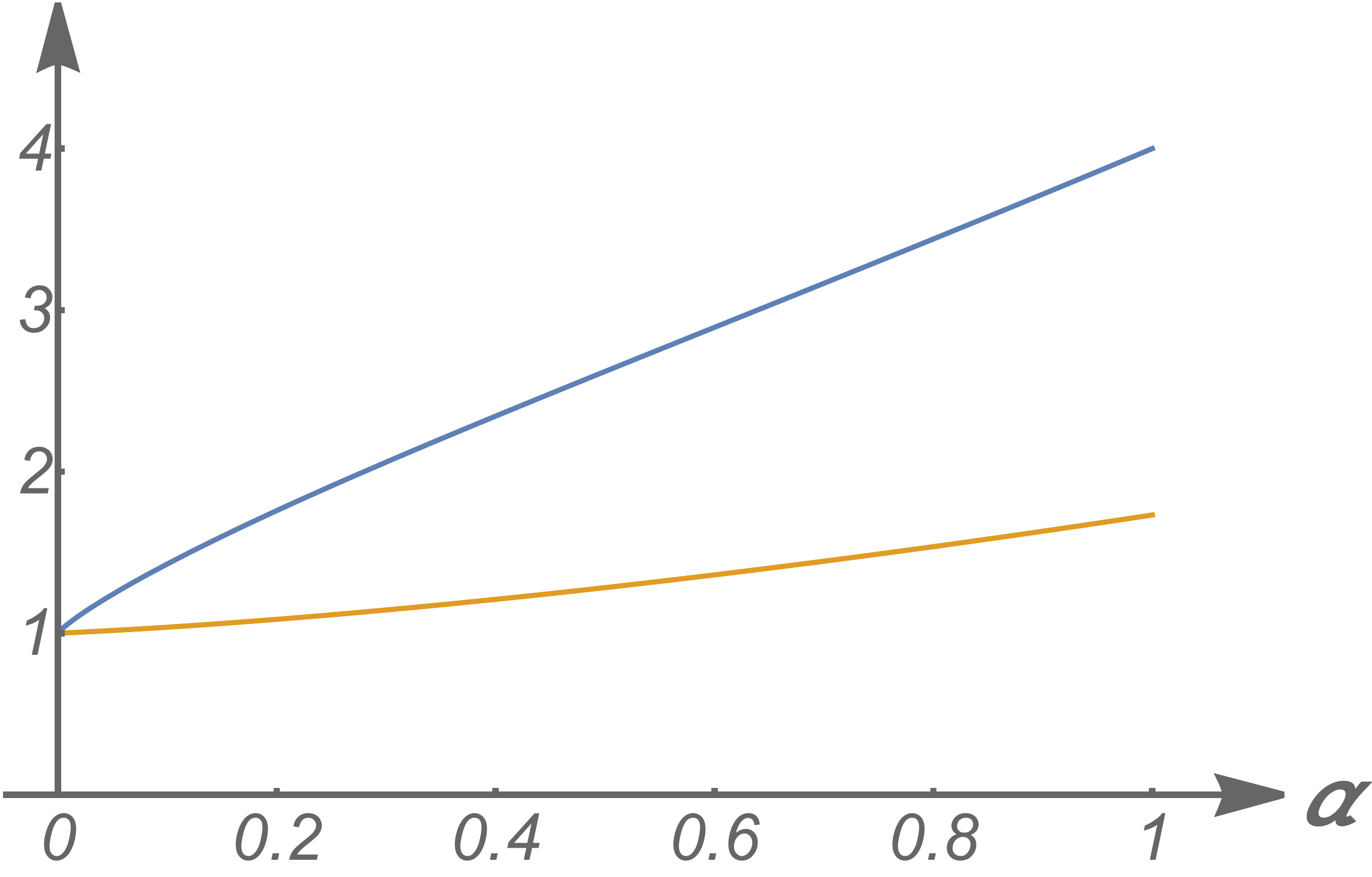}
	\caption{Lower and upper bounds \eqref{eq:estimate-Fgroundstate}}
	\label{fig:variational-estimate}
\end{figure}%

\begin{figure}[t!]
	\includegraphics[width=0.46\textwidth]{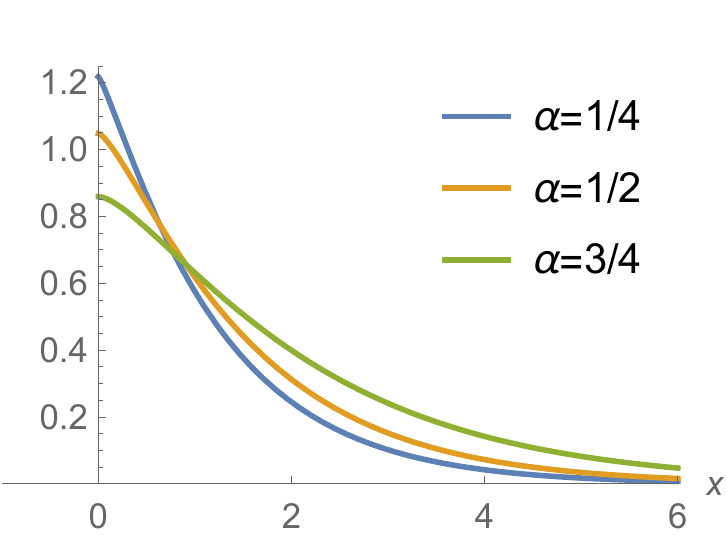} $\quad$  
\includegraphics[width=0.46\textwidth]{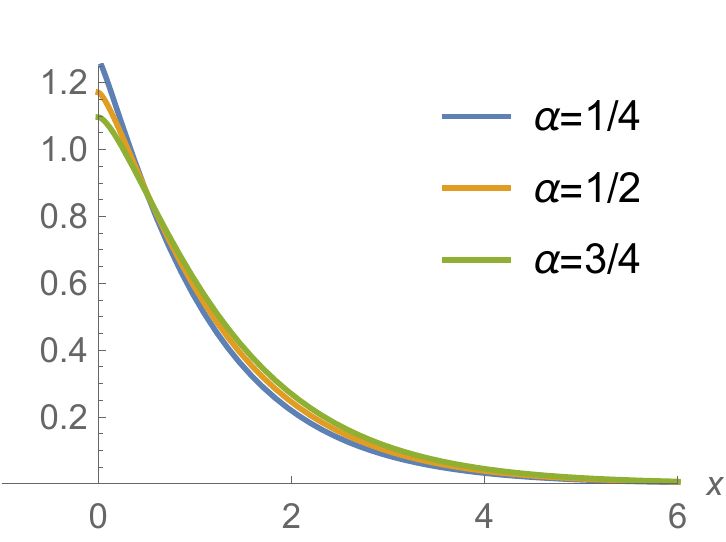}
	\caption{$x$-profile of the (constant in $y$) normalised ground state wave function of the Hamiltonian $H_{\alpha,\mathrm{R}}^{[\gamma]}$ for various $\alpha$. Left: case $\gamma=-1$, thus with different ground state energies depending on $\alpha$ according to \eqref{eq:gs--R}. Right: case with ground state energy $E_0\big(H_{\alpha,\mathrm{R}}^{[\gamma]}\big)=-1$, thus with different $\gamma$ for each $\alpha$, according to \eqref{eq:gsenergyR}. The smaller the parameter $\alpha$, the more pronounced the localisation of the wave function around the metric's singularity.}
	\label{fig:3}
\end{figure}

\begin{figure}[!t]
\captionsetup[subfigure]{labelformat=empty} 
  \centering
  \subfloat[][$\begin{array}{c}\textrm{excited state of mode $k=\pm 1$} \\ \textrm{for $H_{\alpha,a}^{[\gamma]}$ with $a=1,\gamma=1$}\end{array}$]
  {\includegraphics[width=0.45\textwidth]{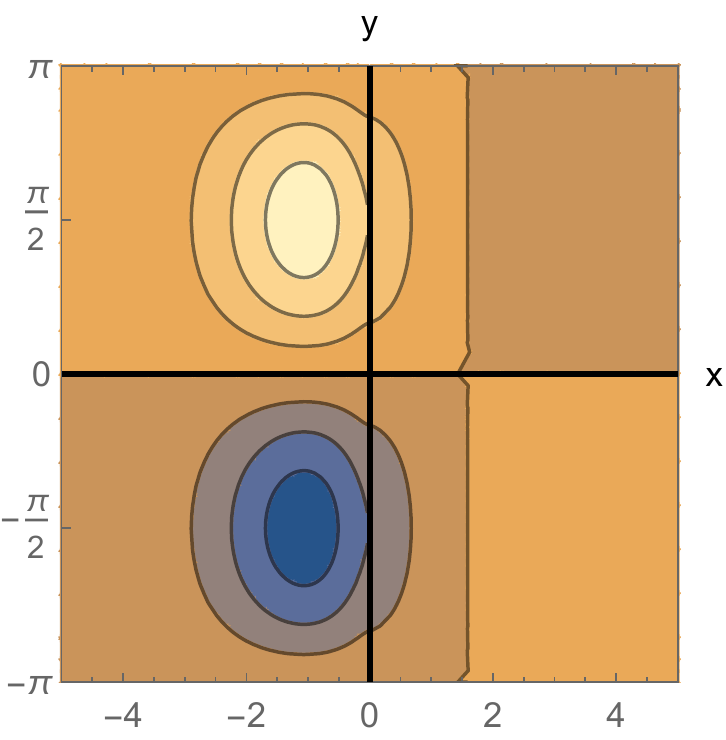} }
  \subfloat[][$\begin{array}{c}\textrm{excited state of mode $k=\pm 1$} \\ \textrm{for $H_{\alpha,a}^{[\gamma]}$ with $a=-2,\gamma=-1$}\end{array}$]
  {\includegraphics[width=0.45\textwidth]{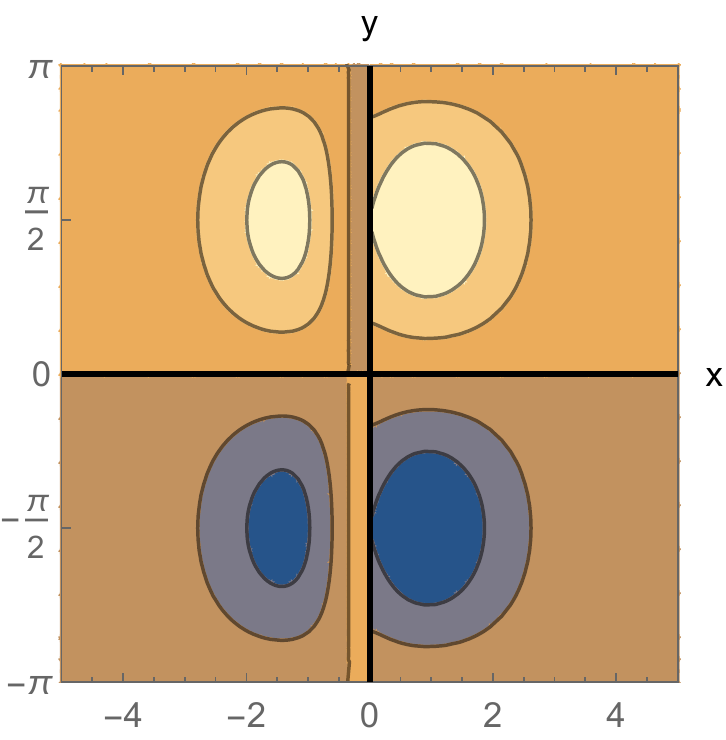} } \\
  \vspace{-0.4cm}
  \subfloat[][$\begin{array}{c}\textrm{eigenstate of mode $k=\pm 1$} \\ \textrm{for $H_{\alpha,\mathrm{F}}$}\end{array}$]
  {\includegraphics[width=0.45\textwidth]{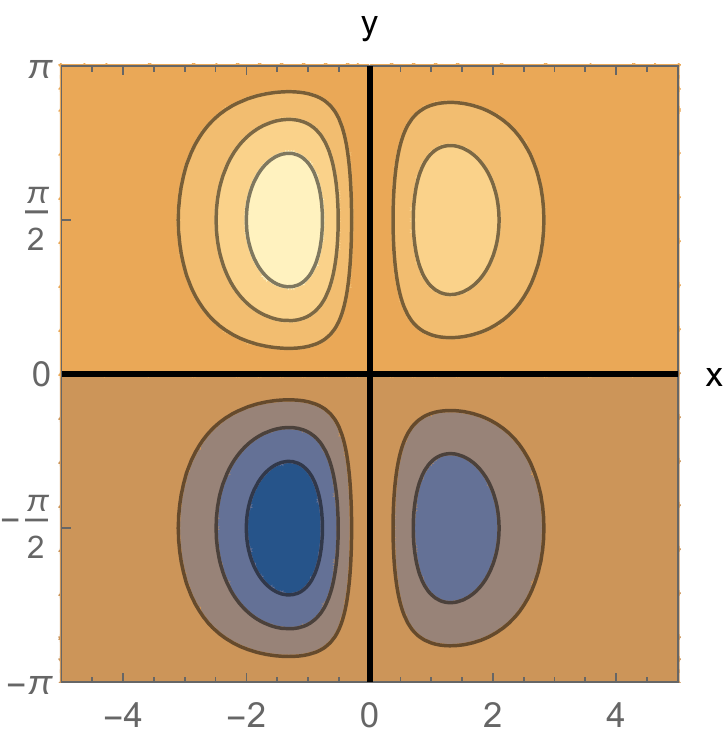} } 
  \subfloat[][$\begin{array}{c}\textrm{excited state of mode $k=\pm 1$} \\ \textrm{for $H_{\alpha,a}^{[\gamma]}$ with $a=1,\gamma=0$  (bridging)}\end{array}$]
  {\includegraphics[width=0.45\textwidth]{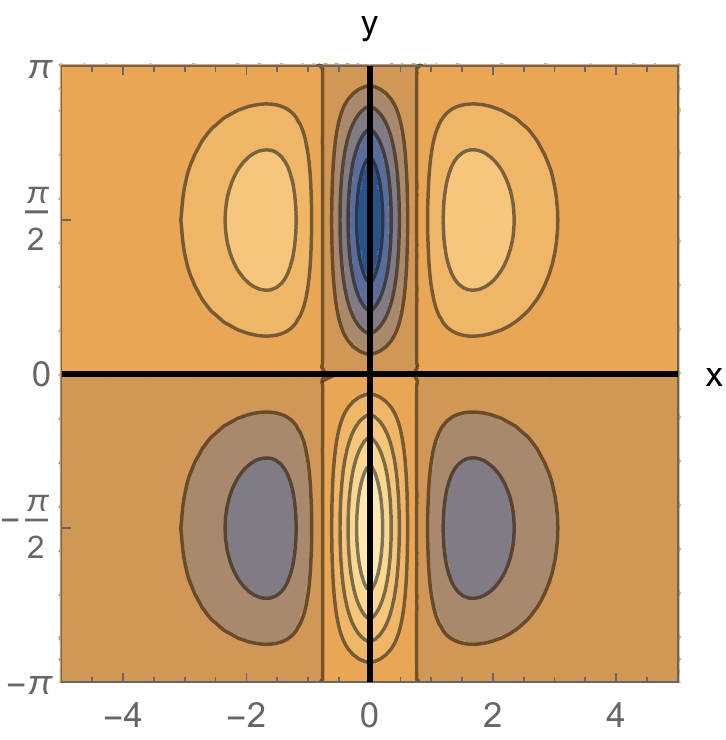} }
  \caption{Contour plot of bound states for various Hamiltonians of Theorem \ref{thm:H_alpha_fibred_extensions}, all with $\alpha=\frac{1}{2}$. The considered bound states, whose energy is embedded in the spectral interval $[0,+\infty)$, are produced by the mode $k=\pm 1$ of the fibred representation of the Hamiltonian described in Section \ref{sec:UEProb} and Theorem \ref{thm:fibredHalpha-spectrum}: they thus display the simplest admissible $y$-oscillation ($\sin y$).}
  \label{fig:contourplots}
\end{figure}

Above the ground state described in Theorem \ref{thm:GroundStateCH}, each Hamiltonian $H_{\alpha,\mathrm{F}}$, $H_{\alpha,\mathrm{R}}^{[\gamma]}$, $H_{\alpha,\mathrm{L}}^{[\gamma]}$, $H_{\alpha,a}^{[\gamma]}$, $H_{\alpha}^{[\Gamma]}$ exhibits an infinite multitude of eigenvalues, all of finite multiplicity, a finite number of them negative, all the others embedded in the essential spectrum $[0,+\infty)$ (Theorem \ref{thm:MainSpectral}). Unlike the ground state wave function, all such excited states do display oscillatory behaviour in the $y$-variable.

Figure \ref{fig:contourplots} displays the contour plots on $(-L,L)_x\times [0,2\pi]_y$, for conveniently large $L>0$, of eigenstate wave functions of various types of Hamiltonians, in all cases with energy level given by the ``$k=\pm 1$ mode'' (in the precise sense of Theorem \ref{thm:fibredHalpha-spectrum} below), meaning that the $y$-oscillation is of the form $\sin y$. At one extreme of the range of possible behaviours there is the Friedrichs extension, whose bound states are well confined \emph{away} from the Grushin singularity $x=0$. Intermediate behaviours are those with some degree of discontinuity at $x=0$, in which case the transmission protocol governed by the corresponding Hamiltonian is affected by partial absorption at the Grushin singularity. At the other extreme, the distinguished protocol governed by $H_{\alpha,a}^{[\gamma]}$ with $a=1$ and $\gamma=0$ results instead in a smooth behaviour of the eigenstate wave functions (like the one displayed in Figure \ref{fig:contourplots}) around $x=0$, a signature of complete communication between the two half-cylinders.

In fact, the Hamiltonian $H_{\alpha,a}^{[\gamma]}$ with $a=1$ and $\gamma=0$ imposes the local behaviour (see  \eqref{eq:DHalpha_cond2_limits-1}-\eqref{eq:DHalpha_cond2_limits-2} and \eqref{eq:DHalpha_cond3_IIa} above) 
\begin{equation}\label{eq:bridging_conditions}
 \begin{split}
  \lim_{x\to 0^-}f(x,y)\;&=\;\lim_{x\to 0^+}f(x,y) \\
  \lim_{x\to 0^-}\Big(\frac{1}{\:|x|^\alpha}\,\frac{\partial f(x,y)}{\partial x}\Big)\;&=\;\lim_{x\to 0^+}\Big(\frac{1}{\:|x|^\alpha}\,\frac{\partial f(x,y)}{\partial x}\Big)\,,
 \end{split}
\end{equation}
which quantum-mechanically is interpreted as the continuity of the spatial probability density of the particle in the region around $\mathcal{Z}$ and of the momentum in the direction orthogonal to $\mathcal{Z}$, defined with respect to the weight $|x|^{-\alpha}$ induced by the metric. This occurrence corresponds to the `optimal' transmission across the boundary, with the dynamics developing the best `bridging' between left and right half-cylinder. Such Hamiltonian is indeed referred to as the `\emph{bridging}' realisation of the free Hamiltonian on Grushin cylinder, an extension identified first in \cite[Proposition 3.11]{Boscain-Prandi-JDE-2016} (clearly, here we are able to recover and study it as a distinguished element of the general classification of Theorem \ref{thm:H_alpha_fibred_extensions}).

To clarify the peculiarity of the bridging transmission protocol, we finally come to the last object of the present study, namely the scattering over the Grushin cylinder.

Intuitively speaking, far away from the Grushin singularity $\mathcal{Z}$ the metric tends to become flat and the action $-\Delta_\alpha$ of each free Hamiltonians considered so far tends to resemble that of the free Laplacian $-\Delta$, plus the correction due to the $(|x|^{-1}\partial_x)$-term, on wave functions $f(x,y)$ that are constant in $y$. This suggests that at very large distances a particle evolves free from the effects of the underlying geometry, and one can speak of scattering states of energy $E>0$. The precise shape of the wave function $f_{\mathrm{scatt}}$ of such a scattering state, at this informal level, can be easily guessed to be of the form
\begin{equation}\label{eq:fscatt}
 f_{\mathrm{scatt}}(x,y)\;\sim\;|x|^{\frac{\alpha}{2}}e^{\pm\ii x\sqrt{E}}\qquad \textrm{as }|x|\to +\infty\,.
\end{equation}
Indeed, $-\Delta_\alpha f_{\mathrm{scatt}}\sim Ef_{\mathrm{scatt}}+\frac{\alpha(2+\alpha)}{4|x|^2}f_{\mathrm{scatt}}$, that is, up to a very small $O(|x|^{-2})$-correction, $f_{\mathrm{scatt}}$ is a generalised eigenfunction of $-\Delta_\alpha$ with eigenvalue $E$. In fact, this is fully justified on a mathematically rigorous level, once one studies the scattering of a convenient unitarily equivalent version of $-\Delta_\alpha$ on flat space $L^2(\mathbb{R}_x\times\mathbb{S}^1_y,\ud x\ud y)$: we shall introduce such unitary equivalence in Section \ref{sec:UEProb} and exploit it throughout the present work, and based on the specific analysis of Section \ref{sec:scattering-fibre} we shall see that indeed scattering states are constant in $y$ and of the form \eqref{eq:fscatt}.

We can then examine, as in the standard stationary scattering analysis, the scattering of a flux of particles injected into the Grushin cylinder at large distances and shot towards the singularity $\mathcal{Z}$ with given energy $E>0$: by monitoring the spatial density of the transmitted flux and the reflected flux, normalised with respect to the density of the incident flux, one quantifies in the usual way the `\emph{transmission coefficient}' and `\emph{reflection coefficient}' for the scattering.

Obviously no scattering across the singularity occurs for Friedrichs, or type-$\mathrm{I}_\mathrm{R}$, or type-$\mathrm{I}_\mathrm{L}$ quantum protocols. We shall focus on type-$\mathrm{II}_a$ scattering, as it includes in particular the bridging protocol; the conclusions for type-$\mathrm{III}$ scattering are qualitatively analogous.

\begin{theorem}[Scattering]\label{thm:scattering} Let $\alpha\in[0,1)$, $a\in\mathbb{C}$, $\gamma\in\mathbb{R}$.
 The transmission coefficient $T_{\alpha,a,\gamma}(E)$ and the reflection coefficient $R_{\alpha,a,\gamma}(E)$ at given energy $E>0$, as defined above for the transmission protocol governed by the Hamiltonian $H_{\alpha,a}^{[\gamma]}$, are given by
  \begin{equation}\label{eq:TR-IIa-thm}
  \begin{split}
   T_{\alpha,a,\gamma}(E)\;&=\;\left|\frac{\,E^{\frac{1+\alpha}{2}}(1+e^{\ii\pi\alpha})\,\Gamma(\frac{1-\alpha}{2})\,\overline{a}}{\,E^{\frac{1+\alpha}{2}}\Gamma(\frac{1-\alpha}{2})(1+|a|^2)+\ii\,\gamma\,2^{1+\alpha}e^{\ii\frac{\pi}{2}\alpha}\Gamma(\frac{3+\alpha}{2})} \right|^2 \,, \\
   R_{\alpha,a,\gamma}(E)\;&=\;\left|\frac{\,E^{\frac{1+\alpha}{2}}\Gamma(\frac{1-\alpha}{2})\,(1-|a|^2\,e^{\ii\pi\alpha})+\ii\,\gamma\,2^{1+\alpha}e^{\ii\frac{\pi}{2}\alpha}\Gamma(\frac{3+\alpha}{2})}{\,E^{\frac{1+\alpha}{2}}\Gamma(\frac{1-\alpha}{2})(1+|a|^2)+\ii\,\gamma\,2^{1+\alpha}e^{\ii\frac{\pi}{2}\alpha}\Gamma(\frac{3+\alpha}{2})}\right|^2\,.
  \end{split}
 \end{equation}
  They satisfy 
  \begin{equation}\label{eq:TpEi1-thm}
  T_{\alpha,a,\gamma}(E)+R_{\alpha,a,\gamma}(E)\;=\;1\,,
 \end{equation}
 and when $\gamma=0$ they are independent of $E$ .
 The scattering is reflection-less ($R_{\alpha,a,\gamma}(E)=0$) when
 \begin{equation}\label{eq:Etransm-thm}
   E\;=\;\bigg(\frac{\,2^{1+\alpha}\,\gamma\,\Gamma(\frac{3+\alpha}{2})\sin\frac{\pi}{2}\alpha\,}{\,\Gamma(\frac{1-\alpha}{2})(1-\cos\pi\alpha)\,}\bigg)^{\!\frac{2}{1+\alpha}},	
 \end{equation}
 provided that $\alpha\in(0,1)$, $|a|=1$, and $\gamma>0$. In the high energy limit the scattering is independent of the extension parameter $\gamma$ and one has
 \begin{equation}\label{eq:highenergyscatt-thm}
  \begin{split}
   \lim_{E\to+\infty}T_{\alpha,a,\gamma}(E)\;&=\;\frac{\,2\,|a|^2(1+\cos\pi\alpha)}{(1+|a|^2)^2}\,, \\
   \lim_{E\to+\infty}R_{\alpha,a,\gamma}(E)\;&=\;\frac{\,1+|a|^4-2|a|^2\cos\pi\alpha}{(1+|a|^2)^2}\,,
  \end{split}
 \end{equation}
 whereas in the low energy limit, for $\gamma\neq 0$,
  \begin{equation}\label{eq:lowenergyscatt-thm}
  \begin{split}
   \lim_{E\downarrow 0}T_{\alpha,a,\gamma}(E)\;&=\;0\,, \\
   \lim_{E\downarrow 0}R_{\alpha,a,\gamma}(E)\;&=\;1\,.
  \end{split}
 \end{equation}
\end{theorem}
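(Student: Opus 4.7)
I would exploit the fibered decomposition of $-\Delta_\alpha$ introduced in Section~\ref{sec:UEProb}: since the $y$-variable is compact, each self-adjoint realisation splits over the Fourier modes $e^{\ii k y}$, $k\in\mathbb{Z}$, and the scattering states \eqref{eq:fscatt} are supported in the $k=0$ mode only. In this fibre, the eigenvalue equation $-\Delta_\alpha f=Ef$ reduces to the radial ODE $-u''+\tfrac{\alpha}{|x|}u'=Eu$ on each half-line $\mathbb{R}^\pm$. Substituting $u(x)=|x|^{(1+\alpha)/2}v(|x|\sqrt{E})$ turns it into the Bessel equation of order $\beta=(1+\alpha)/2$, so the generalised eigenfunctions on each half-line are
\[
 |x|^{\beta}\bigl(A_\pm H^{(1)}_\beta(|x|\sqrt{E})+B_\pm H^{(2)}_\beta(|x|\sqrt{E})\bigr),
\]
and I would encode the scattering ansatz (incident flux from $x\to-\infty$, no flux from $x\to+\infty$) by imposing the large-argument Hankel asymptotics to identify $B_-$ as the incident amplitude, $A_-$ as the reflected amplitude, $A_+$ as the transmitted amplitude, and setting $B_+=0$. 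Because the $|x|^{\alpha/2}$-prefactor (produced by $H^{(j)}_\beta(z)\sim \sqrt{2/(\pi z)}\,e^{\pm\ii z}$ times $|x|^\beta$) is common to incident, reflected and transmitted waves, the transmission and reflection coefficients reduce to the plain ratios $T=|A_+/B_-|^2$ and $R=|A_-/B_-|^2$.

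Next I would compute the boundary data $f_0^\pm,f_1^\pm$ defined in \eqref{eq:DHalpha_cond2_limits-1}--\eqref{eq:DHalpha_cond2_limits-2} by expanding $H^{(1,2)}_\beta$ in power series near $z=0$: the leading constant term of $|x|^\beta H^{(j)}_\beta(|x|\sqrt{E})$ comes from $J_{-\beta}$ inside $Y_\beta$ and contributes to $f_0^\pm$, while the $|x|^{1+\alpha}$ term comes from $J_\beta$ and contributes to $f_1^\pm$. Explicit small-$z$ asymptotics yield $f_0^\pm$, $f_1^\pm$ as linear combinations of $A_\pm,B_\pm$ with explicit coefficients involving $(k/2)^{\pm\beta}$, $\Gamma(\beta+1)=\Gamma(\tfrac{3+\alpha}{2})$, $\Gamma(1-\beta)=\Gamma(\tfrac{1-\alpha}{2})$ and $\sin(\beta\pi)=\cos(\tfrac{\pi\alpha}{2})$. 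Inserting these expressions into the type-$\mathrm{II}_a$ boundary conditions \eqref{eq:DHalpha_cond3_IIa} gives a $2\times 2$ linear system which I would solve by first eliminating $A_+$ via $A_+=a(A_--B_-)$ and then obtaining $A_-/B_-$ and $A_+/B_-$ as rational expressions in a single parameter
\[
 \mu\;:=\;|a|^2+\ii\cot(\beta\pi)(1+|a|^2)+\ii\gamma\,C_0/C_1,
\]
where $C_0/C_1$ collects the $(k/2)^{-2\beta}$-factor together with the Gamma/sine quotients. A direct algebraic manipulation, using $\sin(\beta\pi)+\ii\cos(\beta\pi)=\ii e^{-\ii\beta\pi}$ and the identity $|1+e^{\ii\pi\alpha}|^2=4\cos^2(\tfrac{\pi\alpha}{2})$, converts $|\mu+1|^2$ and $|\mu-1|^2$ precisely into the denominator $|D|^2$ and the numerator of $R$ appearing in \eqref{eq:TR-IIa-thm}, up to a unimodular factor $e^{\ii\pi\alpha/2}$ that I would absorb without altering the moduli. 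The unitarity identity \eqref{eq:TpEi1-thm} then follows automatically from $\Re\mu=|a|^2$, or equivalently from Wronskian conservation on each half-line.

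Finally I would derive the three special regimes by direct inspection of \eqref{eq:TR-IIa-thm}. The high-energy limit \eqref{eq:highenergyscatt-thm} is immediate because the $\gamma$-term in $D$ is $O(1)$ while the rest grows like $E^{(1+\alpha)/2}$, and the factor $|1+e^{\ii\pi\alpha}|^2=2(1+\cos\pi\alpha)$ produces the stated limit. The low-energy limit \eqref{eq:lowenergyscatt-thm} is equally immediate, the $\gamma$-term now dominating in $D$ while the numerator of $T$ vanishes. The reflection-less condition \eqref{eq:Etransm-thm} is obtained by requiring the numerator of $R$ to vanish: writing its real and imaginary parts and using $\sin\pi\alpha=2\sin(\tfrac{\pi\alpha}{2})\cos(\tfrac{\pi\alpha}{2})$ and $1-\cos\pi\alpha=2\sin^2(\tfrac{\pi\alpha}{2})$, both equations collapse to a single one which forces $|a|^2=1$, $\gamma>0$, $\alpha\in(0,1)$ and the stated value of $E$.

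\textbf{Main obstacle.} The calculation itself is an ODE matching problem of textbook type, so the substantive difficulty lies entirely in the bookkeeping of phases: the two small-$z$ tails of $H^{(1,2)}_\beta$ are complex combinations of $J_{\pm\beta}$, the two large-$z$ tails are complex exponentials, and the four complex amplitudes $A_\pm,B_\pm$ interact through the non-self-adjoint-looking type-$\mathrm{II}_a$ conditions \eqref{eq:DHalpha_cond3_IIa}. Matching the resulting expression against the precise form of the numerator and denominator in \eqref{eq:TR-IIa-thm}, in particular recognising that the apparent factor $e^{-\ii\pi\alpha/2}/\cos(\tfrac{\pi\alpha}{2})$ in $\mu+1$ conspires with $(k/2)^{-(1+\alpha)}$ to produce the stated combination $E^{(1+\alpha)/2}\Gamma(\tfrac{1-\alpha}{2})(1+|a|^2)+\ii\gamma 2^{1+\alpha}e^{\ii\pi\alpha/2}\Gamma(\tfrac{3+\alpha}{2})$, is the one non-mechanical step.
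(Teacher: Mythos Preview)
Your proposal is correct and follows essentially the same approach as the paper: the paper likewise reduces to the $k=0$ fibre, writes the generalised eigenfunctions in terms of the Hankel functions $H^{(1,2)}_{(1+\alpha)/2}$, extracts the boundary values from their small-argument expansion, and solves the resulting $2\times 2$ linear system imposed by the type-$\mathrm{II}_a$ conditions (this is Proposition~\ref{prop:scatteringIIa}, which Theorem~\ref{thm:scattering} then invokes via unitary equivalence). The only cosmetic difference is that the paper first applies the unitary map $U_\alpha$ of Section~\ref{sec:UEProb} to convert the first-order ODE $-u''+\tfrac{\alpha}{|x|}u'=Eu$ into the Schr\"odinger form $-g''+\tfrac{\alpha(2+\alpha)}{4x^2}g=Eg$ (so its solutions are $\sqrt{|x|}\,H^{(j)}_\beta$ rather than your $|x|^\beta H^{(j)}_\beta$), and it sets up the incident wave from $x\to+\infty$ rather than $-\infty$; neither change affects the substance of the matching computation.
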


 Figure \ref{fig:genericTR} displays two representative behaviours of $ T_{\alpha,a,\gamma}(E)$ and $ R_{\alpha,a,\gamma}(E)$.
 
 \begin{figure}[!h]
\captionsetup[subfigure]{labelformat=empty} 
  \centering
  \subfloat[][$\alpha=\frac{2}{3},a=\ii,\gamma=\frac{1}{2}$]
  {\includegraphics[width=0.45\textwidth]{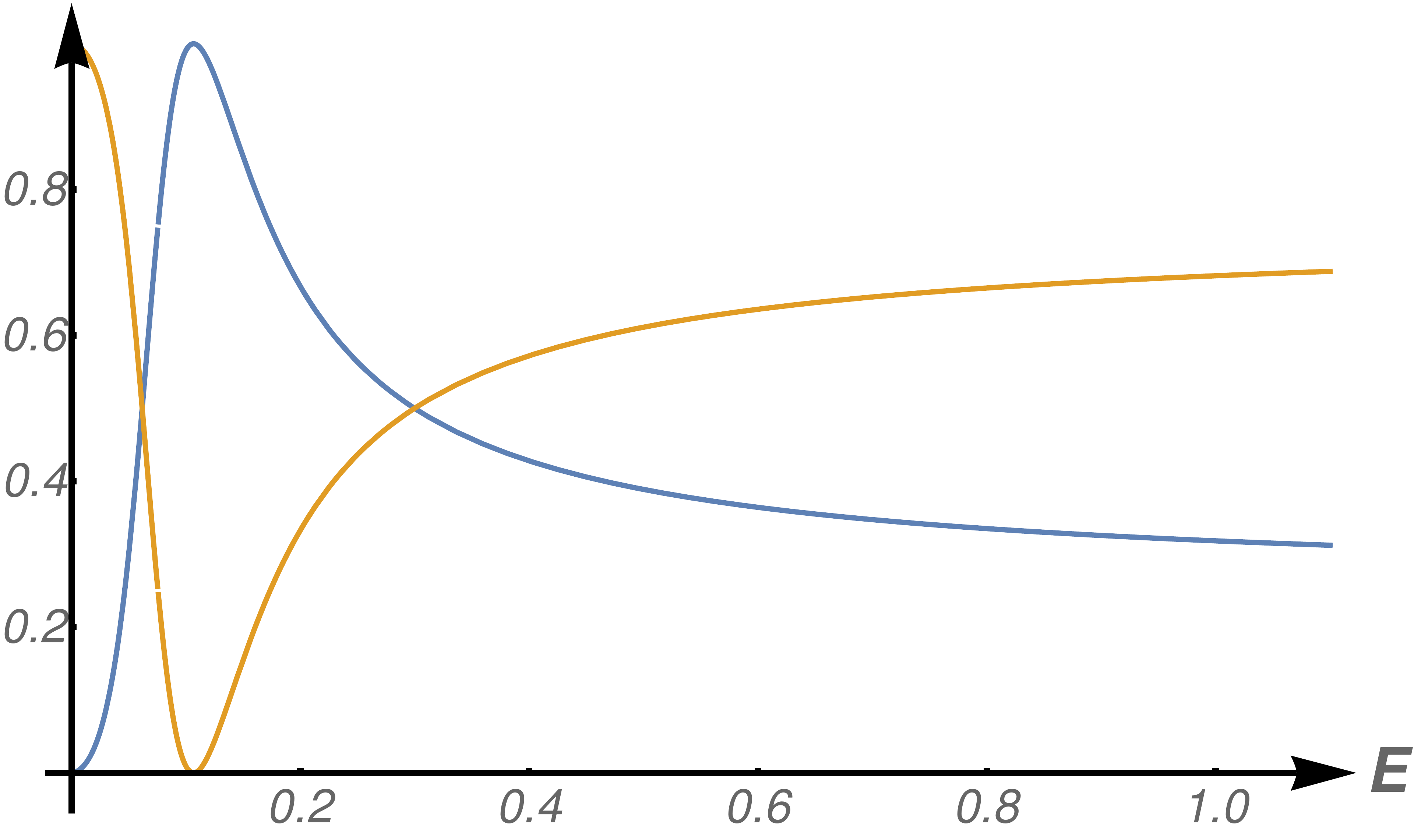} }
  \subfloat[][$\alpha=\frac{2}{5},a=\frac{\ii}{2},\gamma=-\frac{1}{5}$]
  {\includegraphics[width=0.45\textwidth]{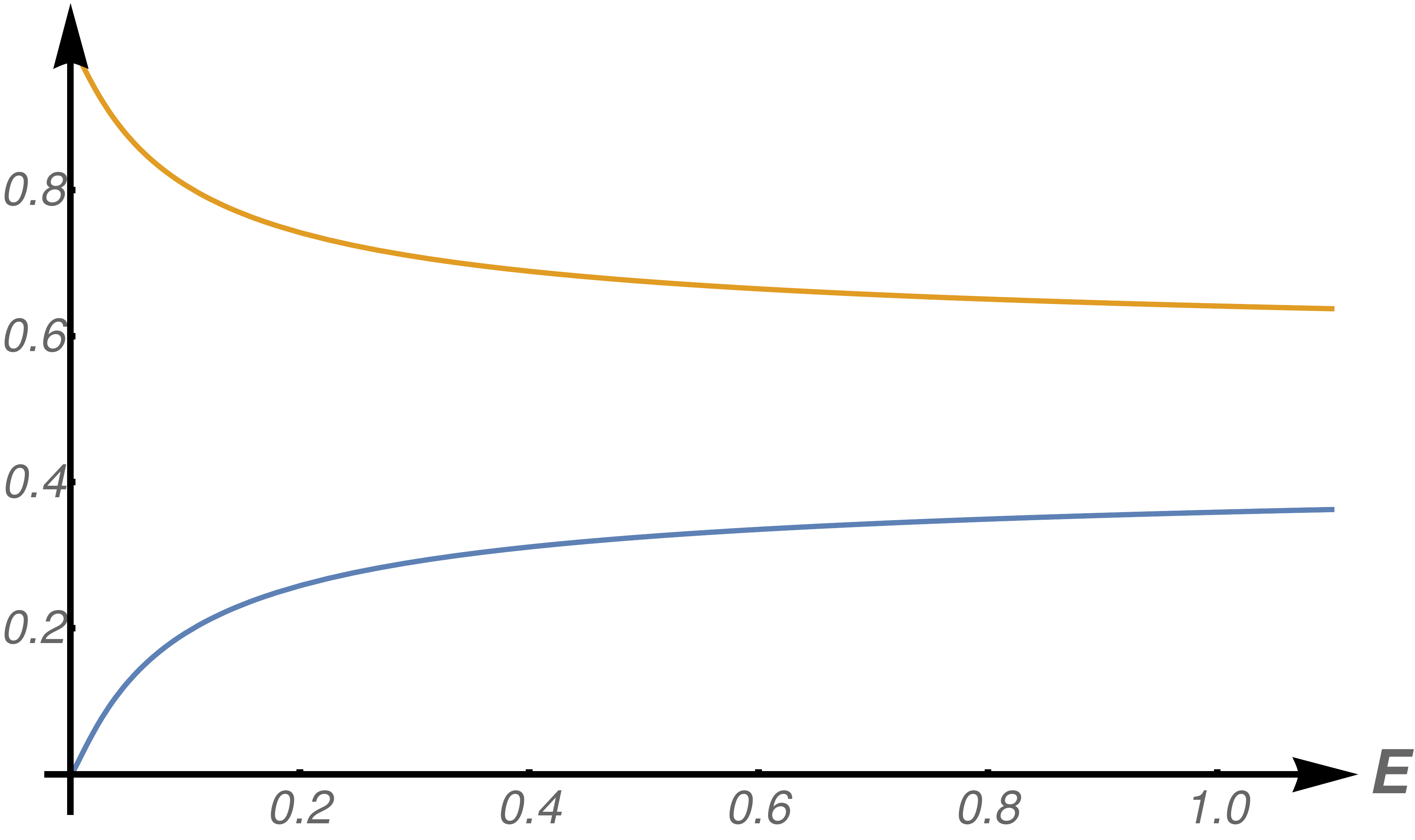} } \\
  \caption{Plot of the coefficients $ T_{\alpha,a,\gamma}(E)$ (blue curve) and $ R_{\alpha,a,\gamma}(E)$ (orange curve) according to formula \eqref{eq:TR-IIa-thm}. In the first case, for a special value of $E$ the scattering is reflection-less.}
  \label{fig:genericTR}
\end{figure}

Specialising the general results of Theorems \ref{thm:H_alpha_fibred_extensions} and \ref{thm:scattering} for the bridging protocol ($a=1$, $\gamma=0$), one may summarise its distinguished status as follows:
\begin{itemize}
 \item \emph{no spatial filter:} the bridging Hamiltonian, as well as all type-$\mathrm{II}_a$ protocols with $a=1$, imposes the local continuity of the wave function at $\mathcal{Z}$ (first identity in \eqref{eq:bridging_conditions}), thus a transmission with no jump in the particle's probability density from one side to the other of the singularity;
 \item \emph{no energy filter}: in the scattering governed by the bridging Hamiltonian, as well as by all type-$\mathrm{II}_a$ protocols with $\gamma=0$, the fraction of transmitted (and reflected) flux does not depend on the incident energy,  
 \begin{equation}
 \begin{split}
   T^{\mathrm{bridg.}}(E)\;:=\;T_{\alpha,1,0}(E)\;&=\;\frac{1}{2}\,(1+\cos\pi\alpha)\,, \\
   R^{\mathrm{bridg.}}(E)\;:=\; R_{\alpha,1,0}(E)\;&=\;\frac{1}{2}\,(1-\cos\pi\alpha)\,,
  \end{split}
\end{equation}
 meaning that the singularity does not act as a filter in the energy.  
\end{itemize}

%

Scattering-wise, one last observation is surely worthwhile. At the upper edge of the considered range for the parameter $\alpha$, based on \eqref{eq:TR-IIa-thm} above we find
	\begin{equation}
		\begin{split}
			&\lim_{\alpha \uparrow 1} T_{\alpha,a,\gamma}(E) \;=\; 0 \, , \\
			&\lim_{\alpha \uparrow 1} R_{\alpha,a,\gamma}(E) \;=\; 1 \, .
		\end{split}
	\end{equation}
Thus, the more singular the metric, the less transmitting the type-$\mathrm{II}_a$ protocol, up to the threshold $\alpha=1$ corresponding to the regime of geometric quantum confinement, where indeed the scattering becomes transmission-less (complete reflection).

We should like to make two additional remarks on the previous results. First, it is worth underlying that the occurrence of the scattering only in the mode $k=0$ is inherently connected with the ``shape'' of the Grushin cylinder as $|x| \to +\infty$. If $\alpha \in(0,1)$, the expression \eqref{eq:Grushin_Metric} of the metric indicates that the larger $|x|$ the more the cylinder shrinks in the transversal direction, up to closing to a single point at infinity: this forces the incident particle incoming from infinity to only have zero angular momentum. When $\alpha=0$ the Grushin cylinder is an actual infinite cylinder in the Euclidean metric: thus, if one were to replace the singular interaction, supported on the circle $\{0\}\times\mathbb{S}^1_y$, of the models considered here, with a localised potential around $x=0$ and with $y$ dependence, one would then be able to engineer a flow of incoming particles with non-zero angular momentum, thus spiraling around the cylinder's axis and scattering through different sectors. In the present model, instead, the \emph{local}  boundary conditions at the singularity locus are $y$-\emph{independent}, also when $\alpha=0$: this makes the analysis independent in each sector.

Second, concerning the possible existence of the special energy value \eqref{eq:Etransm-thm} at which the scattering is reflection-less, this is a phenomenon one is familiar with already from toy models such as the one-dimensional scattering over a finite rectangular barrier, and occurs when the incoming wave at that energy can ``conspire in the most efficient way with the boundary conditions at the scattering centre, so as to have zero reflection. It is worth observing that as $\alpha\to 1^-$, namely when the magnitude of the singularity of the metric reaches the threshold beyond which there is only geometric quantum confinement, thus no scattering, the reflection-less energy \eqref{eq:Etransm-thm} decreases with $\alpha$ up to vanishing: this means that at the $\alpha=1$ threshold the reflection-less scattering disappears, consistently with the fact that the whole scattering is inhibited.

Theorems \ref{thm:positivity}, \ref{thm:MainSpectral}, \ref{thm:GroundStateCH}, and \ref{thm:scattering}, and Proposition \ref{prop:Friedrichs-groundstate} are proved in Sect.~\ref{sec:inverseunitarytransf} after an amount of preparations, that is: the analysis set-up in a convenient, unitarily equivalent framework (Section \ref{sec:UEProb}) in which the fibred structure of the extension problem emerges, as a consequence of the compactness of the $y$-variable of $M$; the spectral analysis in each fibre (Section \ref{sec:spectral-in-fibre}); the scattering analysis on the zero-mode fibre (Section \ref{sec:scattering-fibre}); the reconstruction of the spectral content of the fibred extensions (Sections \ref{prop:spectra-in-direct-sum} and \ref{sec:inverseunitarytransf}).

\section{Unitarily equivalent problem} \label{sec:UEProb}

As a matter of fact, the self-adjoint extension problem for $H_\alpha$ in $L^2(M, \ud \mu_\alpha)$ is more conveniently dealt with in a suitable unitarily equivalent re-formulation that exploits the natural fibered structure of the Hilbert space once the Fourier transform is taken in the compact variable $y$. In this Section we collect the relevant properties of this construction required for our subsequent analysis, referring to our previous works \cite{GMP-Grushin-2018,GMP-Grushin2-2020} for further details and proofs.

Keeping in mind the left-right orthogonal decomposition \eqref{eq:decomp+-}, we switch from the Hilbert spaces $L^2(M^\pm,\ud\mu_\alpha)$ to the new Hilbert spaces
\begin{equation}\label{eq:global-unitary-pm}
  \cH^\pm\;:=\;\mathcal{F}_2^{\pm} U_\alpha^{\pm}L^2(M^\pm,\ud\mu_\alpha)
\end{equation}
where $U_\alpha^{\pm}$ and $\mathcal{F}_2^{\pm}$ are unitary transformations defined, respectively, as
\begin{equation}\label{eq:unit1}
\begin{split}
 U_\alpha^\pm:L^2(\mathbb{R}^\pm\times\mathbb{S}^1,|x|^{-\alpha}\ud x\ud y)&\stackrel{\cong}{\longrightarrow}L^2(\mathbb{R}^\pm\times\mathbb{S}^1,\ud x\ud y) \\
 f &\; \mapsto\;\phi\;:=\;  |x|^{-\frac{\alpha}{2}}f\,,
\end{split}
\end{equation}
and
\begin{equation}\label{eq:defF2}
 \begin{split}
  \mathcal{F}_2^{\pm}:L^2(\mathbb{R}^\pm\times\mathbb{S}^1,\ud x\ud y)&\stackrel{\cong}{\longrightarrow}
 L^2(\mathbb{R}^\pm,\ud x)\otimes\ell^2(\mathbb{Z})\,, \\
  \phi &\;\mapsto\;\psi\;\equiv\;(\psi_k)_{k\in\mathbb{Z}}\,, \\
  e_k(y)\;:=\;\frac{e^{\ii k y}}{\sqrt{2\pi}}\,,&\qquad \psi_k(x)\,:=\int_0^{2\pi}\overline{e_k(y)}\,\phi(x,y)\,\ud y\,,\qquad x\in\mathbb{R}^{\pm}
 \end{split}
\end{equation}
(thus, $\phi(x,y)=\sum_{k\in\mathbb{Z}}\psi_k(x)e_k(y)$ in the $L^2$-convergent sense).

Up to canonical isomorphisms,
\begin{equation}\label{eq:EquivalentHSpaces}
	\mathcal{H}^\pm \;= \; \bigoplus_{k \in \mathbb{Z}} L^2(\mathbb{R}^\pm, \ud x) \; \cong  \; \ell^2(\mathbb{Z},L^2(\mathbb{R}^\pm)) \; \cong \; L^2(\mathbb{R}^\pm) \otimes \ell^2(\mathbb{Z})\,,
\end{equation}
therefore $\mathcal{H}^+$ and $\mathcal{H}^-$ display a natural \emph{`constant-fibre' orthogonal sum structure}
\begin{equation}\label{L^2directDecomp}
  \cH^\pm\;=\;\bigoplus_{k\in\mathbb{Z}} \;\mathfrak{h}^\pm\,,\qquad \mathfrak{h}_\pm\;:=\;L^2(\mathbb{R}^\pm,\ud x)
\end{equation}
with \emph{constant fiber} $\mathfrak{h}_\pm$ and scalar product 
\begin{equation}
 \big\langle (\psi_k)_{k\in\mathbb{Z}} , (\widetilde{\psi}_k)_{k\in\mathbb{Z}} \big\rangle_{\cH^{\pm}}=\;\sum_{k\in\mathbb{Z}}\,\int_{\mathbb{R}^\pm}\overline{\psi_k(x)}\,\widetilde{\psi}_k(x)\,\ud x\;\equiv\;\sum_{k\in\mathbb{Z}}\,\langle \psi_k,\widetilde{\psi}_k\rangle_{\mathfrak{h}^\pm}\,.
\end{equation}

In complete analogy one defines $\mathcal{F}_2:=\mathcal{F}_2^-\oplus\mathcal{F}_2^+$, $U_\alpha:=U_\alpha^-\oplus U_\alpha^+$, whence $\mathcal{F}_2U_\alpha=\mathcal{F}_2^- U_\alpha^-\oplus \mathcal{F}_2^+ U_\alpha^+$, and
\begin{equation}\label{eq:Hxispace}
 \begin{split}
   \cH\;&:=\;\mathcal{F}_2U_\alpha L^2(M,\ud\mu_\alpha)\;\cong\;\ell^2(\mathbb{Z},L^2(\mathbb{R},\ud x))\;\cong\;\cH^-\oplus\cH^+\;\cong\;\bigoplus_{k\in\mathbb{Z}}\;\mathfrak{h}\,, \\
    \mathfrak{h}\;&:=\;L^2(\mathbb{R}^-,\ud x)\oplus L^2(\mathbb{R}^+,\ud x)\;\cong\;L^2(\mathbb{R},\ud x)\,.
 \end{split}
\end{equation}

In terms of the transformation \eqref{eq:global-unitary-pm} we then switch from the operators $H_\alpha^\pm$ to their unitarily equivalent counterparts
\begin{equation}\label{eq:unitary_transf_pm}
 \mathscr{H}_\alpha^\pm\;:=\;\mathcal{F}^{\pm}_2\, U_\alpha^\pm \,H_\alpha^\pm \,(U_\alpha^\pm)^{-1}(\mathcal{F}_2^{\pm})^{-1}
\end{equation}
acting on $\cH^{\pm}$. Explicitly,
\begin{equation}\label{eq:actiondomainHalpha}
  \begin{split}
  \mathcal{D}(\mathscr{H}_\alpha^\pm)\;&=\;\Big\{\psi\equiv(\psi_k)_{k\in\mathbb{Z}}\in \bigoplus_{k\in\mathbb{Z}} L^2(\mathbb{R}^\pm,\ud x)\,\Big|\,\psi\in\mathcal{F}_2^{\pm}C^\infty_c(\mathbb{R}^\pm_x\times\mathbb{S}^1_y)\Big\} \\
    \mathscr{H}_\alpha^\pm\psi\;&=\;\Big(\Big(-\frac{\ud^2}{\ud x^2}+k^2 |x|^{2\alpha}+\frac{\,\alpha(2+\alpha)\,}{4x^2}\Big)\psi_k\Big)_{k\in\mathbb{Z}} \,.
 \end{split}
\end{equation}
Completely analogous formulas hold for  $\mathscr{H}_\alpha$, defined in the obvious way, and acting on the Hilbert space $\cH$ (see \eqref{eq:Hxispace} above).


The self-adjoint extensions of $H_\alpha$ with respect to $L^2(M, \ud \mu_\alpha)$ and their spectral properties are more conveniently read out through the above unitary equivalence from the corresponding $\mathscr{H}_\alpha$ with respect to the Hilbert spaces $\cH$.

In particular, the self-adjoint extension problem was solved in \cite{GMP-Grushin2-2020} based on the crucial circumstance that the adjoint 
of $\mathscr{H}_\alpha$ 
is reduced, with respect to the Hilbert space orthogonal decomposition \eqref{eq:Hxispace}, 
as
\begin{equation}\label{eq:Halphaadj-closure-decomp}
 \begin{split}
   \mathscr{H}_\alpha^*\;&=\;\bigoplus_{k\in\mathbb{Z}} \,A_\alpha(k)^*\,, \\
 \end{split}
\end{equation}
where the auxiliary operators $A_\alpha(k)=A^-_\alpha(k)\oplus A^+_\alpha(k)$ all act on the `bilateral fibre' Hilbert space $ \mathfrak{h}\cong L^2(\mathbb{R}^-,\ud x)\oplus L^2(\mathbb{R}^+,\ud x)$ and are defined by
\begin{equation}\label{eq:Axi}
 \begin{split}
  \mathcal{D}(A_\alpha(k))\;&:=\;C^\infty_c(\mathbb{R}^-)\boxplus C^\infty_c(\mathbb{R}^+) \\
  A_\alpha^\pm(k)\;&:=\;-\frac{\ud^2}{\ud x^2}+k^2 |x|^{2\alpha}+\frac{\,\alpha(2+\alpha)\,}{4x^2}\,.
 \end{split}
\end{equation}
(It is worth remarking that instead $\mathscr{H}_\alpha\varsubsetneq\bigoplus_{k\in\mathbb{Z}} A_\alpha(k)$.) Tacitly, the symbols for adjoint have different meaning in the two sides of \eqref{eq:Halphaadj-closure-decomp}: each one refers to the corresponding Hilbert space.

This observation bridges the self-adjoint extension problem for $\mathscr{H}_\alpha$ (which has \emph{infinite} deficiency index) to the collection of the self-adjoint extension problems for the $A_\alpha(k)$'s (each of which has deficiency index \emph{equal to two}), up to a final reconstruction of the global extensions by means of the direct sum \eqref{eq:Halphaadj-closure-decomp}.

And based on the very structure \eqref{eq:Halphaadj-closure-decomp} we shall proceed in this work to characterise the spectral properties of self-adjoint extensions of $\mathscr{H}_\alpha$ 
by determining first the spectral properties of self-adjoint extensions of the $A_\alpha(k)$'s.

For the time being let us continue with listing relevant properties and formulas from the previous analysis \cite{GMP-Grushin2-2020}, which are going to be useful in the next Sections.

Let us re-cap first of all the solution to the self-adjoint extension problem for each $A_\alpha(k)$. Each such operator is densely defined, symmetric, and lower semi-bounded on $L^2(\mathbb{R})$, with
\begin{equation}\label{eq:lowerboundAak}
 A_\alpha(k)\;\geqslant\;(1+\alpha)\big(\textstyle{\frac{2+\alpha}{4}}\big)^{\frac{\alpha}{1+\alpha}}|k|^{\frac{2}{1+\alpha}}\,\mathbbm{1}\,.
\end{equation}

One finds
\begin{equation}\label{eq:Afstar}
 \begin{split}
  A_{\alpha}(k)^*\;&=\;A_{\alpha}^-(k)^*\oplus A_{\alpha}^+(k)^*\,, \\
  \mathcal{D}(A_{\alpha}^\pm(k)^*)\;&=\;
  \left\{\!\!
  \begin{array}{c}
   g^\pm\in L^2(\mathbb{R}^\pm,\ud x)\;\;\textrm{such that} \\
   \big(-\frac{\ud^2}{\ud x^2}+k^2 |x|^{2\alpha}+\frac{\,\alpha(2+\alpha)\,}{4x^2}\big)g^\pm\in L^2(\mathbb{R}^\pm,\ud x)
  \end{array}
  \!\!\right\}, \\
   A_{\alpha}^\pm(k)^*g^\pm\;&=\;\Big(-\frac{\ud^2}{\ud x^2}+k^2 |x|^{2\alpha}+\frac{\,\alpha(2+\alpha)\,}{4x^2}\Big) g^\pm\,,
 \end{split}
\end{equation}
and proves that a generic $g\in \mathcal{D}(A_\alpha(k)^*)$ has the short-range asymptotics
\begin{equation}\label{eq:gbilateralasympt}
 g(x)\;\equiv\;\begin{pmatrix} g^-(x) \\ g^+(x) \end{pmatrix}\;\stackrel{x\to 0}{=} \;\begin{pmatrix} g_0^- \\ g_0^+ \end{pmatrix} |x|^{-\frac{\alpha}{2}}+\begin{pmatrix} g_1^- \\ g_1^+ \end{pmatrix}|x|^{1+\frac{\alpha}{2}}+o(|x|^{\frac{3}{2}})
\end{equation}
for suitable $g_0^\pm,g_1^\pm\in\mathbb{C}$ given by the limits
 \begin{equation}\label{eq:bilimitsg0g1}
  \begin{split}
   g_0^\pm\;&=\;\lim_{x\to 0^\pm} |x|^{\frac{\alpha}{2}}g^\pm(x) \\
   g_1^\pm\;&=\;\lim_{x\to 0^\pm} |x|^{-(1+\frac{\alpha}{2})}\big(g^\pm(x)-g_0^\pm |x|^{-\frac{\alpha}{2}}\big)\,.
  \end{split}
 \end{equation}

In turn, for each fixed $k\in\mathbb{Z}$, one demonstrates that the self-adjoint extensions of $A_\alpha(k)$ can be grouped into the following families
\begin{equation}\label{eq:families-ext-Aak}
 \begin{split}
  & A_{\alpha,\mathrm{F}}(k) \qquad\qquad\quad\;\;\, \textrm{(the Friedrichs extension)}\\
  & \{A_{\alpha,\mathrm{R}}^{[\gamma]}(k)\,|\,\gamma\in\mathbb{R}\} \\
  & \{A_{\alpha,\mathrm{L}}^{[\gamma]}(k)\,|\,\gamma\in\mathbb{R}\} \\
  & \{A_{\alpha,a}^{[\gamma]}(k)\,|\,\gamma\in\mathbb{R}\}\qquad\textrm{for fixed $a\in\mathbb{C}$} \\
  & \{A_{\alpha}^{[\Gamma]}(k)\,|\,\Gamma\equiv(\gamma_1,\gamma_2,\gamma_3,\gamma_4)\in\mathbb{R}^4\}\,,
 \end{split}
\end{equation}
where each operator appearing in \eqref{eq:families-ext-Aak} is a restriction of $A_\alpha(k)^*$ (hence acts as the differential operator in \eqref{eq:Afstar} above) on the corresponding domain, given, in terms of the boundary values $g_0^\pm,g_1^\pm$, respectively by 
\begin{eqnarray}
 \mathcal{D}(A_{\alpha,\mathrm{F}}(k))\!\!&:=&\!\!\{g\in\mathcal{D}(A_\alpha(k)^*)\,|\,g_0^-=0=g_0^+\}\,,  \label{eq:extAak-F} \\
 \mathcal{D}(A_{\alpha,\mathrm{R}}^{[\gamma]}(k))\!\!&:=&\!\!\{g\in\mathcal{D}(A_\alpha(k)^*)\,|\,g_0^-=0\,,\;g_1^+=\gamma g_0^+\}\,,  \label{eq:extAak-IR}\\
 \mathcal{D}(A_{\alpha,\mathrm{L}}^{[\gamma]}(k))\!\!&:=&\!\!\{g\in\mathcal{D}(A_\alpha(k)^*)\,|\,g_0^+=0\,,\;g_1^-=\gamma g_0^-\}\,,  \label{eq:extAak-IL} \\
 \mathcal{D}(A_{\alpha,a}^{[\gamma]}(k))\!\!&:=&\!\!\left\{g\in\mathcal{D}(A_\alpha(k)^*)\left|\!
   \begin{array}{c}
    g_0^+=a\,g_0^- \\
    g_1^-+\overline{a}\,g_1^+=\gamma\, g_0^-
   \end{array}\!\!
   \right.\right\},  \label{eq:extAak-IIa}\\
 \mathcal{D}(A_{\alpha}^{[\Gamma]}(k))\!\!&:=&\!\!\left\{g\in\mathcal{D}(A_\alpha(k)^*)\left|\!
   \begin{array}{c}
    g_1^-=\gamma_1 g_0^-+\zeta g_0^+ \\
    g_1^+=\overline{\zeta} g_0^-+\gamma_4 g_0^+ \\
    \zeta:=\gamma_2+\ii\gamma_3
   \end{array}\!\!
   \right.\right\}. \label{eq:extAak-III}
\end{eqnarray}

This completes the self-adjoint extension problem for $A_\alpha(k)$.

Now, the actual family of self-adjoint extensions of $\mathscr{H}_\alpha$ (self-adjoint restrictions of $\mathscr{H}_\alpha^*$) is a vast collection of operators identified by suitable boundary conditions as $x\to 0$ that in general couple different $k$-modes and in this sense are non-local (in momentum) and hence physically not relevant (the size of this enormous family, in fact its full classification, can be inferred from \cite[Eq.~(6.16)]{GMP-Grushin2-2020}).

Yet, a sub-class of physically meaningful self-adjoint extensions can be singled out, constituted by the operators
\begin{eqnarray}
  \mathscr{H}_{\alpha,\mathrm{F}}\!\!&:=&\!\!\bigoplus_{k\in\mathbb{Z}}\,A_{\alpha,\mathrm{F}}(k)\,,\label{eq:HalphaFriedrichs_unif-fibred} \\
  \mathscr{H}_{\alpha,\mathrm{R}}^{[\gamma]}\!\!&:=&\!\!\bigoplus_{k\in\mathbb{Z}}A_{\alpha,\mathrm{R}}^{[\gamma]}(k)\,, \label{eq:HalphaR_unif-fibred} \\
  \mathscr{H}_{\alpha,\mathrm{L}}^{[\gamma]}\!\!&:=&\!\!\bigoplus_{k\in\mathbb{Z}}A_{\alpha,\mathrm{L}}^{[\gamma]}(k)\,, \label{eq:HalphaL_unif-fibred} \\
  \mathscr{H}_{\alpha,a}^{[\gamma]}\!\!&:=&\!\!\bigoplus_{k\in\mathbb{Z}}A_{\alpha,a}^{[\gamma]}(k)\,,\label{eq:Halpha-IIa_unif-fibred} \\
  \mathscr{H}_{\alpha}^{[\Gamma]}\!\!&:=&\!\!\bigoplus_{k\in\mathbb{Z}}A_{\alpha}^{[\Gamma]}(k)\,,\label{eq:Halpha-III_unif-fibred}
\end{eqnarray}
for all possible $\gamma\in\mathbb{R}$, $\Gamma\in\mathbb{R}^4$, $a\in\mathbb{C}$, where the above sums are referred to the Hilbert space orthogonal decomposition \eqref{eq:Hxispace} for the space $\cH$ (analogously to \eqref{eq:Halphaadj-closure-decomp} above). Each operator of type \eqref{eq:HalphaFriedrichs_unif-fibred}-\eqref{eq:Halpha-III_unif-fibred} displays in its domain boundary conditions of self-adjointness, as $x\to 0$, that have both the same form and the same `magnitude' (hence the same $\gamma$-parameter, or $\Gamma$-parameter) irrespective of the transversal momentum number $k$. For this feature, we referred to them as `\emph{uniformly fibred extensions}' of $\mathscr{H}_\alpha$. In particular, one proves that $\mathscr{H}_{\alpha,\mathrm{F}}$ is the Friedrichs extension.

Finally, unfolding back the unitary transformation \eqref{eq:unitary_transf_pm} one demonstrates that in correspondence to \eqref{eq:HalphaFriedrichs_unif-fibred}-\eqref{eq:Halpha-III_unif-fibred} one obtains, acting on the original physical space $L^2(M,\ud\mu_\alpha)$, precisely the families of extension of Theorem \ref{thm:H_alpha_fibred_extensions}, that is,
\begin{eqnarray}
 H_{\alpha,\mathrm{F}} \!\!&=&\!\!(U_\alpha)^{-1}(\mathcal{F}_2)^{-1} \mathscr{H}_{\alpha,\mathrm{F}}\,\mathcal{F}_2\,U_\alpha\,, \label{eq:inverseUniary_Fri}\\
 H_{\alpha,\mathrm{R}}^{[\gamma]} \!\!&=&\!\!(U_\alpha)^{-1}(\mathcal{F}_2)^{-1} \mathscr{H}_{\alpha,\mathrm{R}}^{[\gamma]}\,\mathcal{F}_2\,U_\alpha\,, \label{eq:inverseUniary_IR}\\
 H_{\alpha,\mathrm{L}}^{[\gamma]} \!\!&=&\!\!(U_\alpha)^{-1}(\mathcal{F}_2)^{-1} \mathscr{H}_{\alpha,\mathrm{L}}^{[\gamma]}\,\mathcal{F}_2\,U_\alpha\,, \\
 H_{\alpha,a}^{[\gamma]} \!\!&=&\!\!(U_\alpha)^{-1}(\mathcal{F}_2)^{-1}\mathscr{H}_{\alpha,a}^{[\gamma]}\,\mathcal{F}_2\,U_\alpha\,,\\
 H_{\alpha}^{[\Gamma]} \!\!&=&\!\!(U_\alpha)^{-1}(\mathcal{F}_2)^{-1}\mathscr{H}_{\alpha}^{[\Gamma]}\,\mathcal{F}_2\,U_\alpha\,. \label{eq:inverseUniary_III}
\end{eqnarray}

This concludes the concise summary of relevant results from the rather laborious analysis of \cite{GMP-Grushin2-2020}.

There is in fact one last technical piece of information that we need to extract from \cite{GMP-Grushin2-2020}, owing to its relevance in the present work. It concerns the family \eqref{eq:families-ext-Aak} of self-adjoint extensions of the operator $A_\alpha(k)$, for fixed $k\in\mathbb{Z}$, with respect to the bilateral fibre Hilbert space $\mathfrak{h}\cong L^2(\mathbb{R})$.

Formulas \eqref{eq:families-ext-Aak}-\eqref{eq:extAak-III} label all such extensions in terms of convenient parameters ($\gamma\in\mathbb{R}$ or $\Gamma\in\mathbb{R}^4$) that allow in each case for a transparent expression of the boundary conditions of self-adjointness in terms of the boundary values $g_0^\pm,g_1^\pm$. This is the final version of a \emph{more intrinsic} parametrisation of the extension family, which is provided by the application of the  Kre\u{\i}n-Vi\v{s}ik-Birman extension scheme (see \cite[Eq.~(5.14)]{GMP-Grushin2-2020}). According to such scheme, the self-adjoint extensions of $A_\alpha(k)$ are in one-to-one correspondence with the self-adjoint operators acting on the one- and two-dimensional spaces $\mathbb{C}$ and $\mathbb{C}^2$, and hence, respectively, with the multiplications by a real number and by a hermitian $2\times 2$ matrix. Such intrinsic labelling operators are the `\emph{Birman operator parameters}' of the Kre\u{\i}n-Vi\v{s}ik-Birman theory \cite[Theorem 5]{GMO-KVB2017} and their explicit form in the present case is given as follows when $k\neq 0$.

\begin{itemize}
	\item The Birman parameter for the sub-families $\{A_{\alpha,\mathrm{R}}^{[\gamma]}(k)\,|\,\gamma\in\mathbb{R}\}$ and $\{A_{\alpha,\mathrm{L}}^{[\gamma]}(k)\,|\,\gamma\in\mathbb{R}\}$ is the $\mathbb{C}\to\mathbb{C}$ multiplication by the real number $\beta_k$, that is linked to the parameter $\gamma$ by the formula
	\begin{equation}\label{eq:GammaIRIL}
	\beta_k\;=\; \frac{2^{\frac{1-\alpha}{1+\alpha}}(1+\alpha)^{\frac{2\alpha}{1+\alpha}}|k|^{\frac{2}{1+\alpha}}}{\Gamma(\frac{1-\alpha}{1+\alpha})} \left( \frac{1+\alpha}{|k|} \gamma + 1 \right)\qquad (k\neq 0),
	\end{equation}
	and the extension $A_{\alpha,\mathrm{R}}^{[\gamma]}(k)$ (resp., $A_{\alpha,\mathrm{L}}^{[\gamma]}(k)$) is identified by $\beta_k$.
	\item The Birman parameter for the sub-family $\{A_{\alpha,a}^{[\gamma]}(k)\,|\,\gamma\in\mathbb{R}\}$, at fixed $a\in\mathbb{C}$, is the $\mathbb{C}\to\mathbb{C}$ multiplication by the real number $\tau_k$, that is linked to the parameter $\gamma$ by the formula
	\begin{equation}\label{eq:GammaTauIIa}
	\tau_k \;=\; \frac{2^{\frac{1-\alpha}{1+\alpha}}(1+\alpha)^{\frac{2\alpha}{1+\alpha}}|k|^{\frac{2}{1+\alpha}}}{\Gamma(\frac{1-\alpha}{1+\alpha})} \left(1+\frac{(1+\alpha) \gamma}{(1+|a|^2) |k|} \right)\qquad (k\neq 0),
	\end{equation}
	and the extension $A_{\alpha,a}^{[\gamma]}(k)$ is identified by $\tau_k$.
	\item The Birman parameter for the sub-family $\{A_{\alpha}^{[\Gamma]}(k)\,|\,\Gamma\in\mathbb{R}^4\}$ is the $\mathbb{C}^2\to\mathbb{C}^2$ linear map induced by the hermitian matrix
	\begin{equation}\label{eq:IIkTheorem51}
	\begin{split}
	 			T_k\;&\equiv\;\begin{pmatrix}
			\tau_{1,k} & \tau_{2,k} + \ii \tau_{3,k} \\
			\tau_{2,k} - \ii \tau_{3,k} & \tau_{4,k}
			\end{pmatrix} \\
			&=\; \frac{\,2^{\frac{1-\alpha}{1+\alpha}}(1+\alpha)^{\frac{2\alpha}{1+\alpha}}|k|^{\frac{2}{1+\alpha}}}{\Gamma(\frac{1-\alpha}{1+\alpha})} \begin{pmatrix}
				1+ \frac{1+\alpha}{|k|} \gamma_1 & \frac{1+\alpha}{|k|}(\gamma_2 + \ii \gamma_3) \\
				\frac{1+\alpha}{|k|}(\gamma_2 - \ii \gamma_3) & 1+ \frac{1+\alpha}{|k|} \gamma_4
			\end{pmatrix}\quad (k\neq 0),
	\end{split}
	\end{equation}
	and the extension $\{A_{\alpha}^{[\Gamma]}(k)$ is identified by $T_k$.
\end{itemize}

Observe that the Birman parameters $\beta_k,\tau_k,T_k$ do depend on the integer $k$. Their expressions \eqref{eq:GammaIRIL}-\eqref{eq:IIkTheorem51} (that are derived directly from \cite[Eq.~(3.59), (5,19), (5.20)]{GMP-Grushin2-2020}) would certainly make the final formulas for the boundary conditions of self-adjointness \eqref{eq:extAak-F}-\eqref{eq:extAak-III} unessentially cumbersome -- the $\gamma$- or $\Gamma$-parametrisations are obviously cleaner. However, Birman parameters are relevant because, as a key feature of the general Kre\u{\i}n-Vi\v{s}ik-Birman theory, they encode particularly useful information on the spectral properties of the extensions that they label. We shall make a crucial use of such information in the following.

\section{Spectral analysis in each fibre}\label{sec:spectral-in-fibre}

The goal of this Section is the spectral analysis of the self-adjoint operators \eqref{eq:families-ext-Aak}, all acting on the fibre Hilbert space $\mathfrak{h}$, for different values of the transversal momentum quantum number $k\in\mathbb{Z}$.

\subsection{Spectral analysis in the fibre $k=0$}~ \label{subsec:Spectrum0}

The mode $k=0$ requires a separate analysis, essentially due to the fact that for the self-adjoint extensions of $A_\alpha(0)$ we do not have explicit expressions for the Birman extension parameter (that, as said, carries direct information on the spectra).


There is nothing conceptually deep preventing one to identify the Birman parameter also when $k=0$. Simply, unlike the $A_\alpha(k)$'s when $k\neq 0$, the non-negative expectations $\langle g,A_\alpha(0)g \rangle_{\mathfrak{h}}$ include  zero at their bottom, therefore the Friedrichs extension of $A_\alpha(0)$ does not admit an everywhere defined and bounded inverse on $L^2(\mathbb{R})$. This makes the standard application of the Kre\u{\i}n-Vi\v{s}ik-Birman scheme not applicable to the pivot extension $A_{\alpha,F}(0)$. Yet, the analysis of the extensions of $A_\alpha(0)$ can be equally achieved by characterising first the extensions of the shifted operator $A_\alpha(0)+\mathbbm{1}$ (see \cite[Sect.~4]{GMP-Grushin2-2020} for details), as well as by direct Green function methods (as in \cite{Bruneau-Derezinski-Georgescu-2011}) and the final result is expressed by formulas \eqref{eq:families-ext-Aak}-\eqref{eq:extAak-III}.

In this Subsection we establish the following picture.

\begin{proposition}\label{prop:SpectrumK0} Let $\alpha\in[0,1)$. 
\begin{itemize}
 \item[(i)] The essential spectrum of any self-adjoint extension \eqref{eq:families-ext-Aak} of $A_\alpha(0)$ satisfy
\begin{equation}\label{eq:sess0}
 \begin{split}
  &\sigma_{\mathrm{ess}}(A_{\alpha,\mathrm{F}}(0))\;=\;\sigma_{\mathrm{ess}}\big(A_{\alpha,\mathrm{R}}^{[\gamma]}(0)\big)\;=\;\sigma_{\mathrm{ess}}\big(A_{\alpha,\mathrm{L}}^{[\gamma]}(0)\big) \\
  &\quad =\;\sigma_{\mathrm{ess}}\big(A_{\alpha,a}^{[\gamma]}(0)\big)\;=\;\sigma_{\mathrm{ess}}\big(A_{\alpha}^{[\Gamma]}(0)\big)\;=\;[0,+\infty)
 \end{split}
\end{equation}
for any $\gamma\in\mathbb{R}$, $a\in\mathbb{C}$, $\Gamma\in\mathbb{R}^4$. In all cases, the essential spectrum does not contain embedded eigenvalues.
\item[(ii)] The discrete spectrum of any such extension can be therefore only strictly negative, and moreover it is empty for $A_{\alpha,\mathrm{F}}(0)$, consists of at most one negative non-degenerate eigenvalue for $A_{\alpha,\mathrm{R}}^{[\gamma]}(0)$, $A_{\alpha,\mathrm{L}}^{[\gamma]}(0)$, and $A_{\alpha,a}^{[\gamma]}(0)$, and consists of at most two negative eigenvalues for $A_{\alpha}^{[\Gamma]}(0)$, counted with multiplicity.
\item[(iii)] $A_{\alpha,\mathrm{R}}^{[\gamma]}(0)$ and $A_{\alpha,\mathrm{L}}^{[\gamma]}(0)$ admit one negative eigenvalue, denoted respectively as $E_0\big(A_{\alpha,\mathrm{R}}^{[\gamma]}(0)\big)$ and $E_0\big(A_{\alpha,\mathrm{L}}^{[\gamma]}(0)\big)$, if and only if $\gamma<0$, in which case
\begin{equation}\label{eq:EigenvalueIR}
E_0\big(A_{\alpha,\mathrm{R}}^{[\gamma]}(0)\big)\;=\;E_0\big(A_{\alpha,\mathrm{L}}^{[\gamma]}(0)\big)\;=\; -\left(2\,\frac{\Gamma(\frac{1+\alpha}{2})}{\,\Gamma(-\frac{1+\alpha}{2})}\, \gamma \right)^{\frac{2}{1+\alpha}} \, .
\end{equation}
The corresponding (non-normalised) eigenfunctions are, respectively,
\begin{equation}\label{eq:eigenf-IRIL}
 \begin{split}
  g_{\alpha}^{(\mathrm{I_R})}(x)\;&:=\;
  \begin{cases}
   \qquad 0\,, & x<0\,, \\
   \;\sqrt{x}\,K_{\frac{1+\alpha}{2}}(x\sqrt{E})\,, & x>0\,,
  \end{cases} \\
  g_{\alpha}^{(\mathrm{I_L})}(x)\;&:=\;
  \begin{cases}
   \;\sqrt{-x}\,K_{\frac{1+\alpha}{2}}(-x\sqrt{E})\,, & x<0\,, \\
   \qquad 0\,, & x>0\,,
  \end{cases}
 \end{split}
\end{equation}
 where for short $E\equiv -E_0\big(A_{\alpha,\mathrm{R}}^{[\gamma]}(0)\big)=-E_0\big(A_{\alpha,\mathrm{L}}^{[\gamma]}(0)\big)$.
\item[(iv)] For given $a\in\mathbb{C}$, $A_{\alpha,a}^{[\gamma]}(0)$ admits one negative eigenvalue, denoted as $E_0\big(A_{\alpha,a}^{[\gamma]}(0)\big)$, if and only if $\gamma<0$, in which case
\begin{equation}\label{eq:EigenvalueIIa}
E_0\big(A_{\alpha,a}^{[\gamma]}(0)\big)\;=\; -\left(\frac{2\,\Gamma(\frac{1+\alpha}{2})}{(1+|a|^2)\Gamma(-\frac{1+\alpha}{2})} \,\gamma \right)^{\frac{2}{1+\alpha}} \, .
\end{equation}
The corresponding (non-normalised) eigenfunction is
\begin{equation}\label{eq:eigenf-IIa}
  g_{\alpha}^{(\mathrm{II}_a)}(x)\;:=\;
  \begin{cases}
   \; \sqrt{-x}\,K_{\frac{1+\alpha}{2}}(-x\sqrt{E})\,, & x<0\,, \\
   \;a \sqrt{x}\,K_{\frac{1+\alpha}{2}}(x\sqrt{E})\,, & x>0\,,
  \end{cases}
\end{equation}
 where for short $E\equiv -E_0\big(A_{\alpha,a}^{[\gamma]}(0)\big)$.
\item[(v)] $A_{\alpha}^{[\Gamma]}(0)$ admits at most two negative eigenvalues: exactly two, given by
\begin{equation}\label{eq:III0-evs}
 \begin{split}
  E_0\big(A_{\alpha}^{[\Gamma]}(0)\big)\;&:=\;-\left(\frac{\,2^\alpha\,\Gamma(\frac{1+\alpha}{2})}{\Gamma(-\frac{1+\alpha}{2})} \Big( \gamma_1+\gamma_4 - \sqrt{(\gamma_1-\gamma_4)^2 +4 (\gamma_2^2+\gamma_3^2)} \,\Big)\right)^{\frac{2}{1+\alpha}} \\
  E_1\big(A_{\alpha}^{[\Gamma]}(0)\big)\;&:=\; -\left(\frac{\,2^\alpha\,\Gamma(\frac{1+\alpha}{2})}{\Gamma(-\frac{1+\alpha}{2})}\Big( \gamma_1+\gamma_4 + \sqrt{(\gamma_1-\gamma_4)^2 +4 (\gamma_2^2+\gamma_3^2)} \,\Big)\right)^{\frac{2}{1+\alpha}},
 \end{split}
\end{equation}
with $E_0\big(A_{\alpha}^{[\Gamma]}(0)\big)\leqslant E_1\big(A_{\alpha}^{[\Gamma]}(0)\big)$, 
 if and only if
		\begin{equation}\label{eq:III0-2neg}
			\gamma_1+\gamma_4 \; < \; 0 \qquad \text{and} \qquad \gamma_1 \, \gamma_4 \; > \; \gamma_2^2 + \gamma_3^2\,,
		\end{equation}
 only one negative eigenvalue, the quantity $E_0\big(A_{\alpha}^{[\Gamma]}(0)\big)$ above, if and only if
\begin{equation}\label{eq:III0-only1neg}
  \gamma_1 \, \gamma_4 \; < \; \gamma_2^2 + \gamma_3^2\qquad\textrm{or}\qquad
  \begin{cases}
   \;\gamma_1 \, \gamma_4 \; = \; \gamma_2^2 + \gamma_3^2 \\
   \;\gamma_1+\gamma_4\;<\;0\,,
  \end{cases}
\end{equation}
or no negative eigenvalue at all, if and only if
\begin{equation}\label{eq:III0-none-neg}
  \gamma_1+\gamma_4 - \sqrt{(\gamma_1-\gamma_4)^2 +4 (\gamma_2^2+\gamma_3^2)} \;\geqslant \;0\,.
\end{equation}
The lowest negative eigenvalue $E_0\big(A_{\alpha}^{[\Gamma]}(0)\big)$, if existing, is non-degenerate when additionally $\gamma_1\neq\gamma_4$ or $\gamma_2^2+\gamma_3^2>0$, in which case its (non-normalised) eigenfunction is
\begin{equation}\label{eq:eigenf-III-nondegen}
 g_{\alpha}^{(\mathrm{III})}(x)\;:=\;\begin{cases}
  \!\!
  \begin{array}{l}
   \big({\textstyle \gamma_1-\gamma_4 - \sqrt{(\gamma_1-\gamma_4)^2 +4 (\gamma_2^2+\gamma_3^2)}}\big)\,\times \\
   \qquad \times\,\sqrt{-x}\,K_{\frac{1+\alpha}{2}}(-x\sqrt{E})\,,
  \end{array} 
  & x<0\,, \\
  \;2(\gamma_2-\ii\gamma_3)\,\sqrt{x}\,K_{\frac{1+\alpha}{2}}(x\sqrt{E})\,, & x>0\,,
 \end{cases}
\end{equation}
 where for short $E\equiv -E_0\big(A_{\alpha}^{[\Gamma]}(0)\big)$.
 Instead, the negative $E_0\big(A_{\alpha}^{[\Gamma]}(0)\big)$, if existing, is two-fold degenerate when  additionally $\gamma_1=\gamma_4$ and $\gamma_2=\gamma_3=0$, in which case its two-dimensional eigenspace is spanned by the (non-normalised) eigenfunctions
 \begin{equation}\label{eq:eigenf-III-degen}
 \begin{split}
  g_{\alpha,+}^{(\mathrm{III})}(x)\;&:=\;
  \begin{cases}
   \qquad 0\,, & x<0\,, \\
   \;\sqrt{x}\,K_{\frac{1+\alpha}{2}}(x\sqrt{E})\,, & x>0\,,
  \end{cases} \\
  g_{\alpha,-}^{(\mathrm{III})}(x)\;&:=\;
  \begin{cases}
   \;\sqrt{-x}\,K_{\frac{1+\alpha}{2}}(-x\sqrt{E})\,, & x<0\,. \\
   \qquad 0\,, & x>0\,.
  \end{cases}
 \end{split}
\end{equation}
\end{itemize}
\end{proposition}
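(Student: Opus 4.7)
I address items (i)--(v) in turn, exploiting that for $k=0$ the fibre operator reduces on each half-line $\mathbb{R}^\pm$ to the radial-type Bessel Hamiltonian $-\partial_x^2 + \frac{\alpha(2+\alpha)}{4x^2}$, with a one-dimensional deficiency space per half-line. For part (i), since $A_\alpha(0)$ has total deficiency indices $(2,2)$, any two of its self-adjoint extensions differ by a finite-rank perturbation in the resolvent sense and hence share their essential spectrum by Weyl's theorem; it therefore suffices to identify $\sigma_{\mathrm{ess}}(A_{\alpha,\mathrm{F}}(0))$, which via the orthogonal decomposition of the Friedrichs extension into its two half-line summands and the classical Hankel/Bessel spectral analysis of $-\partial_x^2+\frac{\alpha(2+\alpha)}{4x^2}$ with Dirichlet condition at $x=0$ equals $[0,+\infty)$ and is purely absolutely continuous. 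Absence of embedded eigenvalues for any extension then follows from the $|x|\to\infty$ asymptotics of solutions of the eigenvalue ODE, behaving like $|x|^{\alpha/2}e^{\pm i|x|\sqrt{E}}$ for $E>0$: neither linearly independent solution lies in $L^2(\mathbb{R}^\pm)$, so no boundary condition at $x=0$ can generate an $L^2$ eigenstate with positive energy.

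For parts (iii)--(v) the strategy is to solve explicitly $A_\alpha(0)^* g = -E g$ with $E>0$. On each half-line the one-dimensional $L^2$-solution space is spanned by $\sqrt{|x|}\,K_{(1+\alpha)/2}(|x|\sqrt{E})$, since the linearly independent companion involving $I_{(1+\alpha)/2}$ diverges exponentially at $\pm\infty$. Writing a generic candidate as $g(x) = c_-\mathbf{1}_{x<0}\sqrt{-x}\,K_\nu(-x\sqrt{E}) + c_+\mathbf{1}_{x>0}\sqrt{x}\,K_\nu(x\sqrt{E})$ with $\nu=(1+\alpha)/2$, the short-range expansion
\[
 K_\nu(z) \;=\; \tfrac{1}{2}\Gamma(\nu)(z/2)^{-\nu} + \tfrac{1}{2}\Gamma(-\nu)(z/2)^\nu + O(z^{2-\nu})\qquad (0<\nu<1)
\]
inserted into the limits \eqref{eq:bilimitsg0g1} yields $g_0^\pm,\,g_1^\pm$ proportional to $c_\pm$, with ratio $g_1^\pm/g_0^\pm$ an explicit function of $E$ times $\Gamma(-\nu)/\Gamma(\nu)$. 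Substituting into the extension-specific boundary conditions \eqref{eq:extAak-IR}--\eqref{eq:extAak-III} produces, for each type, a linear system in $(c_-,c_+)$: for types $\mathrm{I_R}$ and $\mathrm{I_L}$ one forces the opposite-side coefficient to vanish and the residual scalar equation yields \eqref{eq:EigenvalueIR}; for type $\mathrm{II}_a$ one condition fixes $c_+ = a\,c_-$ and the other gives \eqref{eq:EigenvalueIIa}; for type $\mathrm{III}$ the two conditions form a homogeneous $2\times 2$ system whose determinant vanishes on a quadratic in $E^{(1+\alpha)/2}$, with roots as in \eqref{eq:III0-evs}. The eigenfunctions \eqref{eq:eigenf-IRIL}--\eqref{eq:eigenf-III-degen} are then read off from the kernels of the corresponding matrices.

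For the counting and sign parts (ii) and (v), the upper bound of at most one negative eigenvalue for families $\mathrm{I_R},\mathrm{I_L},\mathrm{II}_a$ and at most two for family $\mathrm{III}$ follows from the Kre\u{\i}n-Vi\v{s}ik-Birman framework applied to the shifted operator $A_\alpha(0)+\mathbbm{1}$ (the shift is required because the Friedrichs extension of $A_\alpha(0)$ is non-invertible): the number of extension eigenvalues below the bottom of the Friedrichs spectrum is bounded by the number of negative eigenvalues of the Birman parameter, whose rank is $1$ for families $\mathrm{I}$ and $\mathrm{II}$ and $2$ for family $\mathrm{III}$. Sharp presence-or-absence of negative eigenvalues is then extracted from the eigenvalue formulas by requiring the base of $(\,\cdot\,)^{2/(1+\alpha)}$ to define a positive real number: using the uniform sign $\Gamma(\tfrac{1+\alpha}{2})/\Gamma(-\tfrac{1+\alpha}{2})<0$ for $\alpha\in[0,1)$, this yields $\gamma<0$ for types $\mathrm{I}$ and $\mathrm{II}$, and the three-case dichotomy \eqref{eq:III0-2neg}--\eqref{eq:III0-none-neg} for type $\mathrm{III}$ by a trace-discriminant analysis of when the two roots $\gamma_1+\gamma_4\pm\sqrt{(\gamma_1-\gamma_4)^2+4(\gamma_2^2+\gamma_3^2)}$ are negative. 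The primary technical obstacle will be the type-$\mathrm{III}$ degeneracy: when $\gamma_1=\gamma_4$ and $\gamma_2=\gamma_3=0$ the two candidate roots coincide and the boundary-condition matrix imposes no coupling between $c_-$ and $c_+$, producing the two-dimensional eigenspace spanned by the purely one-sided functions \eqref{eq:eigenf-III-degen} rather than the mixed eigenfunction \eqref{eq:eigenf-III-nondegen} characteristic of the non-degenerate case.
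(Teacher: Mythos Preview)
Your proposal is correct and follows essentially the same route as the paper: invariance of the essential spectrum under finite-rank resolvent perturbations (Weyl), explicit Bessel-function solution of the eigenvalue ODE on each half-line, short-distance expansion of $K_\nu$ to compute the boundary data $g_0^\pm,g_1^\pm$, and then matching against each of the boundary conditions \eqref{eq:extAak-IR}--\eqref{eq:extAak-III}. The Kre\u{\i}n--Vi\v{s}ik--Birman rank bound for the negative spectrum and the trace/determinant analysis for the type-$\mathrm{III}$ trichotomy are also exactly what the paper does.

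Two small points to tighten. First, your stated large-$|x|$ asymptotics for positive-energy solutions, $|x|^{\alpha/2}e^{\pm i|x|\sqrt{E}}$, carries an incorrect power: after the unitary transformation $U_\alpha$ the solutions $\sqrt{x}\,H^{(j)}_{(1+\alpha)/2}(x\sqrt{E})$ are bounded oscillatory (of order $1$), not growing; the $|x|^{\alpha/2}$ factor belongs to the original (pre-$U_\alpha$) picture. This does not affect the conclusion, since bounded oscillatory functions are not in $L^2(\mathbb{R}^\pm)$ either. Second, your embedded-eigenvalue argument treats only $E>0$; the endpoint $E=0$ needs a separate one-line check (the solutions of $-g''+\tfrac{\alpha(2+\alpha)}{4x^2}g=0$ are $x^{1+\alpha/2}$ and $x^{-\alpha/2}$, neither square-integrable at infinity).
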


As each self-adjoint extension of $A_\alpha(0)$ is a restriction of the adjoint \eqref{eq:Afstar}, the eigenvalue problem takes in all cases the form 
\begin{equation}\label{eq:evproblem0}
 \Big(-\frac{\ud^2}{\ud x^2}+\frac{\,\alpha(2+\alpha)\,}{4x^2}+E\Big) g^\pm\;=\;0
\end{equation}
where $-E$ is the eigenvalue and $g\equiv\begin{pmatrix} g^- \\ g^+ \end{pmatrix}\in\mathfrak{h}$ is the corresponding eigenfunction. This yields two independent ODEs, one on $\mathbb{R}^-$ and one on $\mathbb{R}^+$, yet identical in form: it then suffices to only solve, say, the one with $x>0$. Negative eigenvalues are found by restricting to $E>0$.

Upon re-scaling $\xi:=x\sqrt{E}$, $w(x\sqrt{E}):= x^{-\frac{1}{2}} g^+(x)$, the positive half-line version of \eqref{eq:evproblem0} becomes
\begin{equation}
	\xi^2 \frac{\ud^2 w}{\ud \xi^2}+ \xi \frac{\ud w}{\ud \xi} -(\xi^2 + \nu^2) w \; = \; 0\qquad\quad (\textstyle{\nu:=\frac{1+\alpha}{2}})\,,
\end{equation}
namely a modified Bessel equation, the two linearly independent solution of which are the modified Bessel functions $K_\nu$ and $I_\nu$ \cite[Sect.~9.6]{Abramowitz-Stegun-1964}. These are smooth functions over $\mathbb{R}^+$ satisfying the asymptotics
\begin{equation}\label{eq:k0asymptx0}
 \begin{split}
 I_{\frac{1+\alpha}{2}}(\xi)\;&\stackrel{\xi\downarrow 0}{=}\;\frac{1}{2^{\frac{1+\alpha}{2}} \Gamma({\textstyle\frac{3+\alpha}{2}})} \xi^{\frac{1+\alpha}{2}} + O(\xi^{\frac{5+\alpha}{2}}) \\
 K_{\frac{1+\alpha}{2}}(\xi) \;&\stackrel{\xi\downarrow 0}{=}\;2^{\frac{\alpha-1}{2}} \Gamma({\textstyle\frac{1+\alpha}{2}}) \,	\xi^{-\frac{1+\alpha}{2}} + 2^{-\frac{3+\alpha}{2}} \Gamma({\textstyle\frac{1+\alpha}{2}}) \, \xi^{\frac{1+\alpha}{2}} + O(\xi^{\frac{3-\alpha}{2}})
 \end{split}
\end{equation}
(following from \cite[Eq.~(9.6.19)]{Abramowitz-Stegun-1964}) and 
\begin{equation}
\begin{split}
 I_{\frac{1+\alpha}{2}}(\xi)\;&\stackrel{\xi\to +\infty}{=}\;\frac{e^\xi}{\sqrt{2 \pi \xi}\,} (1+O(\xi^{-1})) \\
K_{\frac{1+\alpha}{2}}(\xi)\;&\stackrel{\xi\to +\infty}{=}\;{\textstyle\sqrt{\frac{\pi}{2 \xi}}}\, e^{-\xi}(1+O(\xi^{-1}))
\end{split}
\end{equation}
(following from \cite[Eq.~(9.7.1)-(9.7.2)]{Abramowitz-Stegun-1964}).

One therefore deduces that the square-integrable solutions to \eqref{eq:evproblem0} when $x>0$ form a one-dimensional space spanned by
\begin{equation}\label{eq:k0gensol}
	u_E(x) \; := \; \sqrt{x}\, K_{\frac{1+\alpha}{2}}( x\sqrt{E})\,.
\end{equation}
Thus, the general solution to \eqref{eq:evproblem0} belonging to $\mathfrak{h}\cong L^2(\mathbb{R}^-)\oplus  L^2(\mathbb{R}^+)$ has the form
\begin{equation}\label{eq:GenericGE}
	g_E(x) \; \equiv \; \begin{pmatrix}
		B u_E(-x) \\
		C u_E(x) 
	\end{pmatrix},\qquad B,C \in \mathbb{C}\,.
\end{equation}

Now, \eqref{eq:k0asymptx0} and \eqref{eq:k0gensol} yield
\begin{equation}
 \begin{split}
  & u_E(|x|) \;\stackrel{x\to 0}{=}\; 2^{-\frac{1-\alpha}{2}} \Gamma({\textstyle{\frac{1+\alpha}{2}}}) E^{-\frac{1+\alpha}{4}} |x|^{-\frac{\alpha}{2}} \\
  &\qquad\qquad\quad + 2^{-\frac{3+\alpha}{2}} \Gamma(\textstyle{-\frac{1+\alpha}{2}}) E^{\frac{1+\alpha}{4}} |x|^{1+\frac{\alpha}{2}}+ O(|x|^{2-\frac{\alpha}{2}}) \, .
 \end{split}
\end{equation}
By means of such asymptotics, the boundary values \eqref{eq:bilimitsg0g1} for the function \eqref{eq:GenericGE} are computed as
\begin{equation}\label{eq:gE_boudaryvalues}
	\begin{split}
		g_{E,0}^- \;&= \; B \, 2^{-\frac{1-\alpha}{2}} \Gamma({\textstyle{\frac{1+\alpha}{2}}}) E^{-\frac{1+\alpha}{4}} \\
		g_{E,0}^+ \;& = \; C \, 2^{-\frac{1-\alpha}{2}} \Gamma({\textstyle{\frac{1+\alpha}{2}}}) E^{-\frac{1+\alpha}{4}} \\
		g_{E,1}^- \;& = \; B \, 2^{-\frac{3+\alpha}{2}} \Gamma(\textstyle{-\frac{1+\alpha}{2}}) E^{\frac{1+\alpha}{4}} \\
		g_{E,1}^+ \;& = \; C \, 2^{-\frac{3+\alpha}{2}} \Gamma(\textstyle{-\frac{1+\alpha}{2}}) E^{\frac{1+\alpha}{4}}\,.
	\end{split}
\end{equation}

\begin{proof}[Proof of Proposition \ref{prop:SpectrumK0}]~

Part (i) follows from the fact that $A_\alpha(0)$ has finite deficiency index (equal to 2) and therefore all its self-adjoint extension have the same essential spectrum, say, of the Friedrichs extension. (Recall indeed this fact: self-adjoint extensions of the same densely defined and symmetric operator, when the deficiency index is finite, have the respective resolvents that differ by a finite rank operator, owing to the Kre\u{\i}n resolvent formula \cite[Corollary 6]{GMO-KVB2017}; in turn, two self-adjoint operators with compact resolvent difference have the same essential spectrum \cite[Theorem 8.12]{schmu_unbdd_sa}.)  For the latter, indeed $\sigma_{\mathrm{ess}}(A_{\alpha,\mathrm{F}}(0))=[0,+\infty)$, as is easily seen from the existence of a singular sequence relative to any spectral point $\lambda\geqslant 0$ (the Schr\"{o}dinger operator in \eqref{eq:evproblem0} behaves like the free operator at large distances). Moreover, \eqref{eq:evproblem0} being the eigenvalue problem for a Schr\"{o}dinger operator with potential $V(x)=c|x|^{-2}$, $c>0$, it has no $L^2$-solution if $E\geqslant 0$, which proves the absence of embedded eigenvalues.

Concerning part (ii), $A_{\alpha,\mathrm{F}}(0)$ has the same lower bound zero as the original $A_\alpha(0)$ (the Friedrichs extension preserves the lower bound), and therefore does not have negative spectrum. As for the other extensions, the largest multiplicity of their negative spectrum is computed explicitly in the proof of the next claims of the Proposition. In fact, it can be quantified a priori by observing that the $A_{\alpha,\mathrm{R}}^{[\gamma]}(0)$'s, $A_{\alpha,\mathrm{L}}^{[\gamma]}(0)$'s, and $A_{\alpha,a}^{[\gamma]}(0)$'s form one-parameter sub-families of extensions, meaning that their Birman parameter acts on a one-dimensional space, whereas the $A_{\alpha}^{[\Gamma]}(0)$'s form a four-parameter sub-family, meaning that their Birman parameter acts on a two-dimensional space. As the (finite) multiplicity of the negative spectrum of an extension and of its Birman parameter are the same  \cite[Corollary 4]{GMO-KVB2017}, the conclusion then follows.

For the remaining parts (iii)-(v), let us set for convenience
\begin{equation*}
\mu_0 \;:=\; - 2^{1+\alpha}\frac{\Gamma(\frac{1+\alpha}{2})}{\Gamma(-\frac{1+\alpha}{2})}\;>\;0
\end{equation*}
(observe, indeed, that $\Gamma(-\frac{1+\alpha}{2}) < 0$ and $\Gamma(\frac{1+\alpha}{2})>0$, as $\alpha \in [0,1)$).

The general square-integrable eigenfunction \eqref{eq:GenericGE} belongs to the domain of $A_{\alpha,\mathrm{R}}^{[\gamma]}(0)$ if and only if its boundary values \eqref{eq:gE_boudaryvalues} satisfy the conditions \eqref{eq:extAak-IR}, that in this case read $B=0$, as expected, and
\[
	E^{\frac{1+\alpha}{2}} \;=\; - \mu_0 \, \gamma
\] 
irrespective of $C\in\mathbb{C}$. $E$ being strictly positive, this only makes sense when $\gamma<0$, in which case the expression for the only negative eigenvalue $E_0\big(A_{\alpha,\mathrm{R}}^{[\gamma]}(0)\big)=-E=-(-\mu_0\gamma)^{\frac{2}{1+\alpha}}$ of $A_{\alpha,\mathrm{R}}^{[\gamma]}(0)$ takes the form \eqref{eq:EigenvalueIR}. Owing to \eqref{eq:GenericGE}, the corresponding (non-normalised) eigenfunction has the form \eqref{eq:eigenf-IRIL}. The reasoning for $A_{\alpha,\mathrm{L}}^{[\gamma]}(0)$ is completely analogous. Part (iii) is thus proved.

Along the same line, the eigenfunction \eqref{eq:GenericGE} belongs to $\mathcal{D}\big(A_{\alpha,a}^{[\gamma]}(0)\big)$ if and only if its boundary values \eqref{eq:gE_boudaryvalues} satisfy the conditions \eqref{eq:extAak-IIa}, that in this case read
\[
	\begin{split}
		C \; &= \; a B\,, \\
		E^{\frac{1+\alpha}{2}} \;&= \; -\frac{\mu_0}{1+|a|^2} \gamma\,.
	\end{split}
\]
As $E>0$, one must have $\gamma<0$, in which case the expression for the only negative eigenvalue $E_0\big(A_{\alpha,a}^{[\gamma]}(0)\big)=-E$ of $A_{\alpha,a}^{[\gamma]}(0)$ takes the form \eqref{eq:EigenvalueIIa}. Owing to \eqref{eq:GenericGE}, the corresponding (non-normalised) eigenfunction has the form \eqref{eq:eigenf-IIa}. This proves part (iv).

Last, the eigenfunction \eqref{eq:GenericGE} belongs to $\mathcal{D}\big(A_{\alpha}^{[\Gamma]}(0)\big)$ if and only if \eqref{eq:gE_boudaryvalues} matches \eqref{eq:extAak-III}, i.e.,
\[
	\begin{pmatrix}
		g_{E,1}^- \\
		g_{E,1}^+ 
	\end{pmatrix}
	\; = \;
	\begin{pmatrix}
		\gamma_1 & \gamma_2 + \ii \gamma_3 \\
		\gamma_2 - \ii \gamma_3 & \gamma_4
	\end{pmatrix}
	\begin{pmatrix}
		g_{E,0}^- \\
		g_{E,0}^+
	\end{pmatrix}.
\]
The latter is equivalent to
\[
	2^{1+\alpha}\,\frac{\Gamma(\frac{1+\alpha}{2})}{\,\Gamma(-\frac{1+\alpha}{2})} \begin{pmatrix}
		\gamma_1 & \gamma_2+ \ii \gamma_3 \\
		\gamma_2 - \ii \gamma_3 & \gamma_4
	\end{pmatrix} \begin{pmatrix}
		B \\ C
	\end{pmatrix}\;=\;E^{\frac{1+\alpha}{2}}\begin{pmatrix}
		B \\
		C 
	\end{pmatrix} ,
\]
meaning that $E^{\frac{1+\alpha}{2}}$ is an eigenvalue of
\[
	T \; := \; -\mu_0 \begin{pmatrix}
		\gamma_1 & \gamma_2+ \ii \gamma_3 \\
		\gamma_2 - \ii \gamma_3 & \gamma_4
	\end{pmatrix}  \, .
\]
Thus, $-E$ (with $E>0$) is a negative eigenvalue of $A_\alpha^{[\Gamma]}(0)$ if and only of $E^{\frac{1+\alpha}{2}}$ is a positive eigenvalue of the matrix $T$, and equating the the eigenvector of $T$ to $\begin{pmatrix} B \\ C \end{pmatrix}$ yields the condition on the constants $B$ and $C$ in \eqref{eq:GenericGE} for the corresponding eigenfunction of $A_\alpha^{[\Gamma]}(0)$.

Elementary arguments show that the eigenvalues of $T$ are two and real (due to hermiticity) and 
\begin{itemize}
 \item \emph{both positive} if and only if $\mathrm{Tr}(T)>0$ and $\det(T)>0$,
 \item \emph{only one positive} if and only if one of the following two possibilities occurs: $\det(T)< 0$ (corresponding to two distinct eigenvalues of opposite sign), or $\det(T)=0$ and $\mathrm{Tr}(T)>0$ (corresponding to a positive and a zero eigenvalue).
\end{itemize}
Computing
\[
 \mathrm{Tr}(T)\;=\;-\mu_0(\gamma_1+\gamma_4)\,,\qquad\det(T)\;=\;-\mu_0^2(\gamma_2^2+\gamma_3^2-\gamma_1\gamma_4)
\]
one then concludes that the conditions for $T$ to have, respectively, exactly one, or two positive eigenvalues, and hence for $A_{\alpha}^{[\Gamma]}(0)$ to have, respectively, exactly one, or two negative eigenvalues, take respectively the form \eqref{eq:III0-only1neg} and \eqref{eq:III0-2neg}.

Explicitly, the eigenvalues of $T$ are 
\[
	\lambda^\pm\;:=\;-\frac{\,\mu_0}{2}  \Big( \gamma_1+\gamma_4 \pm \sqrt{(\gamma_1-\gamma_4)^2 +4 (\gamma_2^2+\gamma_3^2)} \,\Big),
\]
with $\lambda^-\geqslant\lambda^+$, and the quantities
\[
 \begin{split}
  E_0\big(A_{\alpha}^{[\Gamma]}(0)\big)\;&:=\;-E^-=-(\lambda^-)^{\frac{2}{1+\alpha}} \\
  E_1\big(A_{\alpha}^{[\Gamma]}(1)\big)\;&:=\;-E^+=-(\lambda^+)^{\frac{2}{1+\alpha}}
 \end{split}
\]
(hence, the expressions \eqref{eq:III0-evs}), defined when applicable depending on the conditions \eqref{eq:III0-2neg}-\eqref{eq:III0-only1neg}, are the corresponding negative eigenvalues of $A_{\alpha}^{[\Gamma]}(0)$ (only the first, or both, when applicable). The condition $\lambda^-\leqslant 0$, hence \eqref{eq:III0-none-neg}, clearly identifies the case when $A_{\alpha}^{[\Gamma]}(0)$ does not have negative eigenvalues at all.

The eigenvalues of $T$ are distinct ($\lambda^->\lambda^+$) when $\gamma_1\neq\gamma_4$ or when at least one among $\gamma_2,\gamma_3$ is non-zero: with distinct eigenvalues, the largest has eigenvector (proportional to)
\[
 \begin{pmatrix}
  B \\ C
 \end{pmatrix} \;\equiv\;
 \begin{pmatrix}
  \gamma_1-\gamma_4 - \sqrt{(\gamma_1-\gamma_4)^2 +4 (\gamma_2^2+\gamma_3^2)} \\
  2(\gamma_2-\ii\gamma_3)
 \end{pmatrix},
\]
implying that the ground state eigenfunction \eqref{eq:GenericGE} of $A_{\alpha}^{[\Gamma]}(0)$ relative to the non-degenerate lowest negative eigenvalue $ E_0\big(A_{\alpha}^{[\Gamma]}(0)\big)$ has the form \eqref{eq:eigenf-III-nondegen}. Instead, $T$ has coincident eigenvalues ($\lambda^-=\lambda^+$) when $\gamma_1=\gamma_4$ and $\gamma_2=\gamma_3=0$, i.e., when $T$ is a multiple of the identity. In this case any vector $\begin{pmatrix}
  B \\ C
 \end{pmatrix}\in\mathbb{C}^2$ is eigenvector for $T$, implying that the $A_{\alpha}^{[\Gamma]}(0)$ has a two-dimensional eigenspace relative to the two-fold degenerate negative eigenvalue  $ E_0\big(A_{\alpha}^{[\Gamma]}(0)\big)=-\mu_0\gamma_1$, spanned by the eigenfunctions \eqref{eq:eigenf-III-degen}.

The proof of part (v) is thus completed.
\end{proof}

\subsection{Spectral analysis in the fibre $k\in\mathbb{Z}\setminus\{0\}$}~ \label{subsec:SpectrumK}

%
%

Next, we shall establish the following counterpart to Proposition \ref{prop:SpectrumK0} for the generic, non-zero mode $k$.

\begin{proposition}\label{prop:NegativeEigenvaluesKDiffZero} Let $\alpha\in[0,1)$, $k\in\mathbb{Z}\setminus\{0\}$, $a\in\mathbb{C}$.
\begin{itemize}
 \item[(i)] The spectrum of any self-adjoint extension \eqref{eq:families-ext-Aak} of $A_\alpha(k)$ is purely discrete. $A_{\alpha,\mathrm{F}}(k)$ has no negative eigenvalue, all other extensions have at most finitely many.
 \item[(ii)] $A_{\alpha,\mathrm{R}}^{[\gamma]}(k)$ and $A_{\alpha,\mathrm{L}}^{[\gamma]}(k)$ have at most one negative eigenvalue, denoted, respectively, by $E_0\big(A_{\alpha,\mathrm{R}}^{[\gamma]}(k)\big)$ and $E_0\big(A_{\alpha,\mathrm{L}}^{[\gamma]}(k)\big)$. Such negative eigenvalue exists if and only if $\gamma<-|k|/(1+\alpha)$, in which case it is non-degenerate with
 	\begin{equation}\label{eq:negEV-k_extIRIL}
	\left.
	\begin{array}{l}
	 E_0\big(A_{\alpha,\mathrm{R}}^{[\gamma]}(k)\big) \\
	 E_0\big(A_{\alpha,\mathrm{L}}^{[\gamma]}(k)\big)
	\end{array}\!\!
	\right\}\;\leqslant\; \frac{\,2^{\frac{1-\alpha}{1+\alpha}}(1+\alpha)^{\frac{2\alpha}{1+\alpha}}|k|^{\frac{2}{1+\alpha}}}{\Gamma(\frac{1-\alpha}{1+\alpha})} \big({\textstyle\frac{1+\alpha}{|k|}}\gamma+1 \big)\qquad(\gamma<-{\textstyle\frac{|k|}{1+\alpha}})\,.
	\end{equation}
 \item[(iii)] $A_{\alpha,a}^{[\gamma]}(k)$ has at most one negative eigenvalue, denoted by $E_0\big(A_{\alpha,a}^{[\gamma]}(k)\big)$. Such negative eigenvalue exists if and only if $\gamma<-|k|(1+|a|^2)/(1+\alpha)$, in which case
 \begin{equation}\label{eq:negEV-k_extIIa}
 \begin{split}
  E_0\big(A_{\alpha,a}^{[\gamma]}(k)\big)\;&\leqslant\;\frac{\,2^{\frac{1-\alpha}{1+\alpha}}(1+\alpha)^{\frac{2\alpha}{1+\alpha}}|k|^{\frac{2}{1+\alpha}}}{\Gamma(\frac{1-\alpha}{1+\alpha})}\big({\textstyle\frac{1+\alpha}{(1+|a|^2)|k|}}\gamma+1 \big) \\
  &\qquad\! (\gamma<{-\textstyle\frac{(1+|a|^2) |k|}{1+\alpha}})\,.
 \end{split}
 \end{equation}
 \item[(iv)] $A_{\alpha}^{[\Gamma]}(k)$ has at most two negative eigenvalues: exactly two if and only if
 \begin{equation}\label{eq:negEV-k_extIII_2ev}
 |k|\;<\;-\textstyle(1+\alpha)\big(\gamma_1+\gamma_4+\sqrt{(\gamma_1-\gamma_4)^2+4 (\gamma_2^2 + \gamma_3^2)}\,\big)\,,
 \end{equation}
 only one if and only if
 \begin{equation}\label{eq:negEV-k_extIII_1ev}
 \begin{split}
  & -\textstyle(1+\alpha)\big(\gamma_1+\gamma_4+\sqrt{(\gamma_1-\gamma_4)^2+4 (\gamma_2^2 + \gamma_3^2)}\,\big)\;\leqslant \\
  & \qquad\qquad \leqslant\; |k|\;<\;-\textstyle(1+\alpha)\big(\gamma_1+\gamma_4-\sqrt{(\gamma_1-\gamma_4)^2+4 (\gamma_2^2 + \gamma_3^2)}\,\big)\,,
 \end{split}
 \end{equation}
 and none if and only if
 \begin{equation}\label{eq:negEV-k_extIII_0ev}
  |k|\;\geqslant\;-\textstyle(1+\alpha)\big(\gamma_1+\gamma_4-\sqrt{(\gamma_1-\gamma_4)^2+4 (\gamma_2^2 + \gamma_3^2)}\,\big)\,.
 \end{equation}
%
%
In the first two cases the lowest negative eigenvalue $ E_0\big(A_{\alpha}^{[\Gamma]}(k)\big)$ satisfies
\begin{equation}\label{eq:negEV-k_extIIIbound}
\begin{split}
 E_0\big(A_{\alpha}^{[\Gamma]}(k)\big)\;&\leqslant\; \frac{\,2^{\frac{1-\alpha}{1+\alpha}}(1+\alpha)^{\frac{2\alpha}{1+\alpha}}|k|^{\frac{2}{1+\alpha}}}{\Gamma(\frac{1-\alpha}{1+\alpha})}\,\times \\
 &\quad \times\left(1+\frac{1+\alpha}{|k|}\Big(\gamma_1+\gamma_4-\sqrt{(\gamma_1-\gamma_4)^2+4 (\gamma_2 + \gamma_3^2)}\,\Big)\right).
\end{split}
\end{equation}
\end{itemize}
\end{proposition}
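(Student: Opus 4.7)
The approach is to reduce everything to the Krein--Visik--Birman (KVB) machinery already set up in the preparatory material. Since the Birman operator parameters $\beta_k$, $\tau_k$, $T_k$ in \eqref{eq:GammaIRIL}--\eqref{eq:IIkTheorem51} are explicit, the negative-eigenvalue counting for each extension reduces to an elementary calculation of eigenvalues of scalars or of a $2\times 2$ Hermitian matrix, while discreteness comes from standard Schr\"odinger theory.

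For part (i), I would first observe that on each half-line the Friedrichs extension $A_{\alpha,\mathrm{F}}^\pm(k)$ acts as the half-line Schr\"odinger operator with potential $V_{k,\alpha}(x) = k^2|x|^{2\alpha} + \frac{\alpha(2+\alpha)}{4x^2}$. For $\alpha\in(0,1)$ this potential diverges to $+\infty$ both as $x\to 0$ (via the $1/x^2$ term) and as $|x|\to\infty$ (via the $k^2|x|^{2\alpha}$ term, since $k\neq 0$), so $A_{\alpha,\mathrm{F}}(k)$ has compact resolvent by the classical Rellich--Molchanov criterion; the borderline case $\alpha=0$ is to be treated separately, as the potential remains bounded at infinity, but this affects at most the essential spectrum above zero and not the finiteness of the negative-eigenvalue count. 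Since $A_\alpha(k)$ has deficiency index $2$, Krein's resolvent formula ensures that the resolvent of any other self-adjoint extension differs from that of Friedrichs by a rank-$\leqslant 2$ operator, whence the negative spectrum remains purely discrete and the perturbation can introduce at most two negative eigenvalues. Moreover $A_{\alpha,\mathrm{F}}(k)$ inherits the strictly positive lower bound \eqref{eq:lowerboundAak} of $A_\alpha(k)$, and hence has no negative eigenvalue.

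For parts (ii)--(iv) the decisive ingredient is the Birman reduction theorem recalled in the previous section (Corollary 4 of \cite{GMO-KVB2017}): since the pivot $A_{\alpha,\mathrm{F}}(k)$ is strictly positive for $k\neq 0$, the number of strictly negative eigenvalues of any self-adjoint extension $\widetilde A$ of $A_\alpha(k)$, counted with multiplicity, coincides with the number of strictly negative eigenvalues of its Birman operator parameter, and the lowest eigenvalue of $\widetilde A$ is bounded above by the lowest (negative) eigenvalue of the Birman parameter (a variational bound intrinsic to the KVB construction, obtained by lifting a Birman eigenvector to a test vector in $\mathcal{D}(\widetilde A)$ and applying min-max). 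For the scalar parameter $\beta_k$ of \eqref{eq:GammaIRIL}, strict negativity reads $\frac{1+\alpha}{|k|}\gamma+1<0$, which yields the threshold $\gamma<-|k|/(1+\alpha)$ for the $\mathrm{I_R}$/$\mathrm{I_L}$ families; analogously the scalar $\tau_k$ in \eqref{eq:GammaTauIIa} gives the threshold $\gamma<-(1+|a|^2)|k|/(1+\alpha)$ for $\mathrm{II}_a$, and the upper bounds \eqref{eq:negEV-k_extIRIL}, \eqref{eq:negEV-k_extIIa} on $E_0$ are precisely $\beta_k$ and $\tau_k$ themselves.

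For the type-$\mathrm{III}$ family, $T_k$ is $2\times 2$ Hermitian and its negative-eigenvalue count is carried out exactly as in the proof of Proposition~\ref{prop:SpectrumK0}(v), via trace--determinant signs applied to the characteristic polynomial $\lambda^2-\operatorname{tr}(T_k)\lambda+\det(T_k)=0$. Diagonalising \eqref{eq:IIkTheorem51} gives two explicit eigenvalues depending on $(\gamma_1,\gamma_2,\gamma_3,\gamma_4,|k|,\alpha)$, and translating the sign requirements $\operatorname{tr}(T_k)<0$, $\det(T_k)>0$ (resp.\ $\det(T_k)\leqslant 0$) into conditions on the algebraic combinations of $\gamma_1,\gamma_2,\gamma_3,\gamma_4$ and $|k|$ leads directly to the thresholds \eqref{eq:negEV-k_extIII_2ev}--\eqref{eq:negEV-k_extIII_0ev}; the upper bound \eqref{eq:negEV-k_extIIIbound} on $E_0$ is then the smaller (negative) eigenvalue of $T_k$. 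The main technical point worth flagging is that, unlike the $k=0$ case where the modified Bessel eigenvalue equation admits a closed-form solution, for $k\neq 0$ the eigenvalue ODE involves the confining term $k^2|x|^{2\alpha}$ and no closed formula for $E_0$ is available; this is precisely why one obtains an \emph{upper} bound on $E_0$ from Birman's variational inequality rather than an identity, and it is at this step that the careful bookkeeping of the KVB correspondence between defect space and negative eigenvalues is essential.
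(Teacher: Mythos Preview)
Your proposal is correct and follows essentially the same route as the paper: discreteness of the spectrum from the confining behaviour of the potential, absence of negative spectrum for Friedrichs from the preserved lower bound \eqref{eq:lowerboundAak}, and then the KVB correspondence (Corollary~4 and Theorem~11 of \cite{GMO-KVB2017}) between negative eigenvalues of the extension and of its Birman parameter, applied to the explicit scalars $\beta_k$, $\tau_k$ and the $2\times 2$ matrix $T_k$. The only structural difference is in part~(i): you establish compact resolvent for the Friedrichs extension via Rellich--Molchanov and then transfer it to the other extensions by the finite-rank resolvent difference from Kre\u{\i}n's formula, whereas the paper argues directly that \emph{any} self-adjoint realisation on each half-line has purely discrete spectrum because the differential action is a Schr\"odinger operator with $V\to+\infty$ at infinity (Titchmarsh); both arguments are valid and yield the same conclusion, and your explicit flag of the $\alpha=0$ borderline (where the $|x|^{2\alpha}$ term no longer confines) is a point the paper's proof glosses over.
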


We shall also need the following additional information on the fibre operators.

\begin{lemma}\label{lem:OrderRelationOps}
	Let $\widetilde{\mathscr{H}}_\alpha=\bigoplus_{k\in\mathbb{Z}}\widetilde{A}_\alpha(k)$ be any one of the operators \eqref{eq:HalphaFriedrichs_unif-fibred}-\eqref{eq:Halpha-III_unif-fibred}. If $|k| > |k'|$, then $\mathcal{D}(\widetilde{A}_\alpha(k))\subset \mathcal{D}(\widetilde{A}_\alpha(k'))$ and $\langle h,\widetilde{A}_\alpha(k)h\rangle_{L^2(\mathbb{R})}>\langle h,\widetilde{A}_\alpha(k')h\rangle_{L^2(\mathbb{R})}$ $\forall h\in\mathcal{D}(\widetilde{A}_\alpha(k))\setminus \{0\}$.
\end{lemma}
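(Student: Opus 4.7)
The plan is to establish the two claims in sequence. The key observation is that since $\widetilde{\mathscr{H}}_\alpha$ is one of the uniformly-fibred extensions \eqref{eq:HalphaFriedrichs_unif-fibred}--\eqref{eq:Halpha-III_unif-fibred}, the boundary conditions at $x=0$ selecting $\widetilde{A}_\alpha(k)$ from $A_\alpha(k)^*$, as expressed through $g_0^\pm,g_1^\pm$ in \eqref{eq:extAak-F}--\eqref{eq:extAak-III}, do not depend on $k$. Therefore the inclusion $\mathcal{D}(\widetilde{A}_\alpha(k))\subset\mathcal{D}(\widetilde{A}_\alpha(k'))$ reduces to the inclusion of maximal domains $\mathcal{D}(A_\alpha(k)^*)\subset\mathcal{D}(A_\alpha(k')^*)$. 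For $g\in\mathcal{D}(A_\alpha(k)^*)$, the identity
\begin{equation*}
	A_\alpha(k')^*g\;=\;A_\alpha(k)^*g\,+\,((k')^2-k^2)|x|^{2\alpha}g
\end{equation*}
reduces matters further to showing $|x|^{2\alpha}g\in L^2(\mathbb{R})$. The local contribution near $x=0$ is automatic since $|x|^{2\alpha}$ is bounded for $\alpha\in[0,1)$. At infinity, it follows from a standard elliptic estimate: squaring $A_\alpha(k)^*g=f$ on $|x|>R$ and integrating the cross term $-2k^2\mathrm{Re}\int\overline{g''}|x|^{2\alpha}g\,\ud x$ by parts twice produces
\begin{equation*}
	k^4\||x|^{2\alpha}g\|^2_{L^2(|x|>R)}\;\leq\;\|f\|^2_{L^2}+C_{\alpha,k,R}\|g\|^2_{L^2}+|\mathrm{boundary}|_{R,\infty}\,,
\end{equation*}
the residual zero-order integral $\int|x|^{2\alpha-2}|g|^2\,\ud x$ being dominated by $\|g\|^2_{L^2}$ precisely because $2\alpha-2<0$ for $\alpha\in[0,1)$.

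Once the domain inclusion is in hand, the strict form inequality is immediate. For $h\in\mathcal{D}(\widetilde{A}_\alpha(k))\setminus\{0\}\subset\mathcal{D}(\widetilde{A}_\alpha(k'))$, both operator actions lie in $L^2(\mathbb{R})$ and differ only by the multiplication operator $(k^2-(k')^2)|x|^{2\alpha}$, so
\begin{equation*}
	\langle h,\widetilde{A}_\alpha(k)h\rangle_{L^2(\mathbb{R})}-\langle h,\widetilde{A}_\alpha(k')h\rangle_{L^2(\mathbb{R})}\;=\;(k^2-(k')^2)\int_{\mathbb{R}}|x|^{2\alpha}|h(x)|^2\,\ud x\,.
\end{equation*}
Both factors on the right-hand side are strictly positive: $k^2-(k')^2>0$ by assumption, and the integral is positive since $|x|^{2\alpha}>0$ for a.e.~$x\in\mathbb{R}$ and $h\not\equiv 0$.

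The main technical point is the elliptic estimate at infinity. The double integration by parts has to be justified rigorously for a function merely in $\mathcal{D}(A_\alpha(k)^*)$; this is routinely handled by truncating $g$ with cutoffs compactly supported in $\{|x|>R\}$ and passing to the limit in the graph norm of $A_\alpha(k)^*$. The vanishing of the boundary contributions as $|x|\to\infty$ is ensured by the limit-point character of $-\partial_x^2+k^2|x|^{2\alpha}$ at infinity, which holds for $k\neq 0$, a condition automatically imposed by the hypothesis $|k|>|k'|\geqslant 0$.
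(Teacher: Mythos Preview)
Your proof is correct and takes a genuinely different route from the paper's. The paper establishes the domain inclusion by invoking the explicit structural characterisations of $\mathcal{D}(\widetilde{A}_\alpha(k))$ and $\mathcal{D}(\overline{A_\alpha(k)})$ obtained in \cite{GMP-Grushin2-2020} (Theorem~5.5 and Proposition~3.8 there), which turn out to be $k$-independent for $k\neq 0$, together with a separate result from \cite{Bruneau-Derezinski-Georgescu-2011} to handle the comparison with the $k'=0$ fibre. In other words, the paper outsources the domain question to prior heavy machinery.

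Your argument is more elementary and self-contained: you observe that the boundary conditions \eqref{eq:extAak-F}--\eqref{eq:extAak-III} are imposed through the intrinsic limits \eqref{eq:bilimitsg0g1}, which depend only on $g$ and not on $k$, so that the whole domain question collapses to the inclusion of maximal domains $\mathcal{D}(A_\alpha(k)^*)\subset\mathcal{D}(A_\alpha(k')^*)$, and the latter to the single estimate $|x|^{2\alpha}g\in L^2(\mathbb{R})$. Your treatment of this estimate is reasonable: boundedness of $|x|^{2\alpha}$ near the origin, and a standard separation-of-terms estimate at infinity for the confining Schr\"odinger operator $-\partial_x^2+k^2|x|^{2\alpha}$ (valid precisely because $k\neq 0$, which the hypothesis $|k|>|k'|$ guarantees). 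The cutoff-and-limit justification you sketch in the final paragraph is indeed the routine way to make this rigorous.

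What each approach buys: the paper's route is shorter on the page but opaque without access to the cited works; yours is longer but exposes the actual mechanism (the perturbation $|x|^{2\alpha}$ is relatively bounded in the graph norm of $A_\alpha(k)^*$ once $k\neq 0$) and would transfer readily to related models. Both handle the strict form inequality identically, as that step is a one-line computation.
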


\begin{proof}[Proof of Proposition \ref{prop:NegativeEigenvaluesKDiffZero}] (i) Any self-adjoint extension of $A_\alpha(k)$ is a suitable restriction of the adjoint \eqref{eq:Afstar}, and therefore acts as the differential operator
\[
 -\frac{\ud^2}{\ud x^2}+k^2 |x|^{2\alpha}+\frac{\,\alpha(2+\alpha)\,}{4x^2}
\]
on $L^2(\mathbb{R}^\pm)$. On each half-line this is a Schr\"{o}dinger operator of the form $-\frac{\ud^2}{\ud x^2}+V(x)$ with $V(x)\to+\infty$ as $|x|\to+\infty$ and therefore (see, e.g., \cite[Sect.~5.5]{Titchmarsh-EigenfExpans-1962}) has purely discrete spectrum. In turn, the spectrum of each extension on the direct sum $L^2(\mathbb{R}^-)\oplus L^2(\mathbb{R}^+)$ is the union of the spectra relative to each half-line, hence is itself purely discrete. In particular, $A_{\alpha,\mathrm{F}}(k)$ is strictly positive, as the Friedrichs extension preserves the lower bound \eqref{eq:lowerboundAak}: thus,  $A_{\alpha,\mathrm{F}}(k)$ has no negative spectrum. All other extensions may produce negative bound states: their number is finite because the original operator $A_\alpha(k)$ has finite deficiency index (see, e.g., \cite[Corollary 5]{GMO-KVB2017}). More precisely (see, e.g., \cite[Corollary 4]{GMO-KVB2017}), the number of negative eigenvalue of each extension is the same as the number of negative bound states of the corresponding Birman extension parameter -- the operators listed in \eqref{eq:GammaIRIL}-\eqref{eq:IIkTheorem51} -- and therefore amounts up to one for $A_{\alpha,\mathrm{R}}^{[\gamma]}(k)$, $A_{\alpha,\mathrm{L}}^{[\gamma]}(k)$, and $A_{\alpha,a}^{[\gamma]}(k)$, and up to two for $A_{\alpha}^{[\Gamma]}(k)$.

(ii) $A_{\alpha,\mathrm{R}}^{[\gamma]}(k)$ and $A_{\alpha,\mathrm{L}}^{[\gamma]}(k)$ are associated to the one-dimensional Birman parameter \eqref{eq:GammaIRIL}, which admits negative spectrum, in the precise number of one negative eigenvalue, if and only if $\beta_k<0$. Such condition is equivalent to $\gamma<-|k|/(1+\alpha)$. A further general feature of the extension theory (see, e.g., \cite[Theorem 11]{GMO-KVB2017}) is that the lowest negative eigenvalue of the Birman parameter is an upper bound of the lowest negative eigenvalue of the corresponding extension. The conditions $E_0\big(A_{\alpha,\mathrm{R}}^{[\gamma]}(k)\big)\leqslant\beta_k<0$ and $E_0\big(A_{\alpha,\mathrm{L}}^{[\gamma]}(k)\big)\leqslant\beta_k<0$ thus yield \eqref{eq:negEV-k_extIRIL}.

(iii) The reasoning is precisely the same as for part (ii), now with respect to the Birman parameter \eqref{eq:GammaTauIIa}. The condition $E_0\big(A_{\alpha,a}^{[\gamma]}(k)\big)\leqslant\tau_k<0$ yields \eqref{eq:negEV-k_extIIa}.

(iv) Now the Birman parameter is the two-dimensional hermitian matrix $T_k$ given by \eqref{eq:IIkTheorem51}. We can drop out the positive multiplicative pre-factor
\[
 \mu_k\;:=\;\frac{\,2^{\frac{1-\alpha}{1+\alpha}}(1+\alpha)^{\frac{2\alpha}{1+\alpha}}|k|^{\frac{2}{1+\alpha}}}{\Gamma(\frac{1-\alpha}{1+\alpha})}\;>\;0\,,
\]
and the number of negative eigenvalues for $T_k$ is equivalent to the number of negative eigenvalues for 
	\[
			\widetilde{T}_k\;:=\; \begin{pmatrix}
				1+ \frac{1+\alpha}{|k|} \gamma_1 & \frac{1+\alpha}{|k|}(\gamma_2 + \ii \gamma_3) \\
				\frac{1+\alpha}{|k|}(\gamma_2 - \ii \gamma_3) & 1+ \frac{1+\alpha}{|k|} \gamma_4
			\end{pmatrix}.
	\]
The latter has indeed two real eigenvalues (due to hermiticity), explicitly,
\[
  \lambda_k^\pm\;:=\;1+\frac{1+\alpha}{|k|}\Big(\gamma_1+\gamma_4\pm\sqrt{(\gamma_1-\gamma_4)^2+4 (\gamma_2^2 + \gamma_3^2)}\,\Big).
\]

Three possibilities can occur: $\lambda_k^-\leqslant\lambda_k^+<0$, $\lambda_k^-<0\leqslant\lambda_k^+$, or $0\leqslant\lambda_k^-\leqslant\lambda_k^+$. The first corresponds to the condition
\[
 |k|\;<\;-\textstyle(1+\alpha)\big(\gamma_1+\gamma_4+\sqrt{(\gamma_1-\gamma_4)^2+4 (\gamma_2^2 + \gamma_3^2)}\,\big)\,,
\]
in which case $T_k$ has two negative eigenvalues (namely $\mu_k\lambda^\pm_k$), and so too does $A_{\alpha}^{[\Gamma]}(k)$ with the upper bound $\mu_k\lambda^-_k\geqslant E_0\big(A_{\alpha}^{[\Gamma]}(k)\big)$.

The second possibility corresponds to the condition
\[
 \begin{split}
  & -\textstyle(1+\alpha)\big(\gamma_1+\gamma_4+\sqrt{(\gamma_1-\gamma_4)^2+4 (\gamma_2^2 + \gamma_3^2)}\,\big)\;\leqslant \\
  & \qquad\qquad \leqslant\; |k|\;<\;-\textstyle(1+\alpha)\big(\gamma_1+\gamma_4-\sqrt{(\gamma_1-\gamma_4)^2+4 (\gamma_2^2 + \gamma_3^2)}\,\big)\,,
 \end{split}
\]
in which case $T_k$ has only one negative eigenvalue (namely $\mu_k\lambda^-_k$), and so too does $A_{\alpha}^{[\Gamma]}(k)$, again with $\mu_k\lambda^-_k\geqslant E_0\big(A_{\alpha}^{[\Gamma]}(k)\big)$.

The third possibility is
\[
 |k|\;\geqslant\;-\textstyle(1+\alpha)\big(\gamma_1+\gamma_4-\sqrt{(\gamma_1-\gamma_4)^2+4 (\gamma_2^2 + \gamma_3^2)}\,\big)\,,
\]
for an infinite number of $k$'s it is never empty, and corresponds to the fact that $T_k$ has no negative eigenvalues, hence $A_{\alpha}^{[\Gamma]}(k)$ has neither.

Part (iv) is thus proved.
\end{proof}

\begin{proof}[Proof of Lemma \ref{lem:OrderRelationOps}]
 For each non-zero $h$ the inequality among expectations is an obvious computation:
 \[
  \langle h,\widetilde{A}_\alpha(k)h\rangle_{L^2(\mathbb{R})}-\langle h,\widetilde{A}_\alpha(k')h\rangle_{L^2(\mathbb{R})}\;=\;\langle h, |x|^{2\alpha}(|k|^2-|k'|^2) h \rangle_{L^2(\mathbb{R})}\;>\;0\,.
 \]
 So one has to prove the inclusion of domains. The actual identity $\mathcal{D}(\widetilde{A}_\alpha(k))= \mathcal{D}(\widetilde{A}_\alpha(k'))$ for $k,k'\in\mathbb{Z}\setminus\{0\}$ is a consequence of the explicit characterisation of the spaces $\mathcal{D}(\widetilde{A}_\alpha(k))$ and $\mathcal{D}(\overline{A_\alpha(k)})$ we made in \cite[Theorem 5.5 and Proposition 3.8]{GMP-Grushin2-2020}, which is independent of $k$. When instead $|k|>0$ the inclusion $\mathcal{D}(\widetilde{A}_\alpha(k))\subset \mathcal{D}(\widetilde{A}_\alpha(0))$ follows again from \cite[Theorem 5.5]{GMP-Grushin2-2020} that provides a representation of $\mathcal{D}(\widetilde{A}_\alpha(0))$ and $\mathcal{D}(\widetilde{A}_\alpha(k))$ in terms of $\mathcal{D}(\overline{A_\alpha(0)})$ and $\mathcal{D}(\overline{A_\alpha(k)})$, and from the inclusion $\mathcal{D}(\overline{A_\alpha(0)})\subset\mathcal{D}(\overline{A_\alpha(k)})$, the latter space being characterised in \cite[Proposition 3.8]{GMP-Grushin2-2020}, the former in \cite[Prop.~4.11(i)]{Bruneau-Derezinski-Georgescu-2011}. (In the notation of \cite[Prop.~4.11(i)]{Bruneau-Derezinski-Georgescu-2011}, $\overline{A_\alpha(0)}$ is the operator $L_m^\mathrm{min}$ with $m=\frac{1+\alpha}{2}$: the validity condition $m\in(0,1)$ required in \cite[Prop.~4.11(i)]{Bruneau-Derezinski-Georgescu-2011} is therefore satisfied.) 
\end{proof}

\section{Scattering on fibre}\label{sec:scattering-fibre}

We continue the analysis on fibre by discussing the scattering. Owing to Propositions \ref{prop:SpectrumK0} and \ref{prop:NegativeEigenvaluesKDiffZero}, this is a meaningful question only for the mode $k=0$, where indeed the self-adjoint realisations of the fibre operator have all essential spectrum $[0,+\infty)$ with no embedded eigenvalues -- in particular, absolutely continuous.

As all the zero-mode self-adjoint operators \eqref{eq:families-ext-Aak} act with the differential action \eqref{eq:Afstar}, we are concerned in each case with the one-dimensional scattering on the potential
\begin{equation}
 V(x)\;:=\;\frac{\alpha(2+\alpha)}{4|x|^2}
\end{equation}
and with the additional ``internal'' interaction governed by one of the boundary conditions \eqref{eq:extAak-F}-\eqref{eq:extAak-III} at $x=0$ -- understanding the cases $\mathrm{I}_\mathrm{R}$ and $\mathrm{I}_\mathrm{L}$ as interaction with the potential $V$ and with the barrier at $x=0$ only on the corresponding half-line.

In all cases the quest for scattering states leads to determining the generalised eigenfunctions relative to an energy $E>0$ as suitable solutions $g\equiv\begin{pmatrix} g^- \\ g^+\end{pmatrix}$ to the ordinary differential equation
\begin{equation}\label{eq:ODEpm}
 -g''+V g\;=\;E g
\end{equation}
on each half-line when applicable.

In complete analogy to the reasoning for \eqref{eq:evproblem0}-\eqref{eq:GenericGE} we solve the problem
\begin{equation}\label{eq:ODEpos}
 u''(x)+\big(E-{\textstyle\frac{\alpha(2+\alpha)}{4|x|^2}}\big)u(x)\;=\;0\,,\qquad x>0\,,
\end{equation}
in terms of modified Bessel function, switching now for convenience to the Hankel functions (Bessel functions of third kind) $H_\nu^{(1)}$ and $H_\nu^{(2)}$, where
\begin{equation}
 \begin{split}
  H_\nu^{(1)}(z)\;&=\;\frac{\ii}{\sin\nu\pi}\big(e^{-\ii\pi\nu}J_\nu(z)-J_{-\nu}(z)\big)\,, \\
  H_\nu^{(2)}(z)\;&=\;-\frac{\ii}{\sin\nu\pi}\big(e^{\ii\pi\nu}J_\nu(z)-J_{-\nu}(z)\big)\,,
 \end{split}
\end{equation}
and $J_\nu$ is the ordinary Bessel function of first kind \cite[Eq.~(9.1.3)-(9.1.4)]{Abramowitz-Stegun-1964}. We thus write the two linearly independent solutions  $u_E^{(1)}$ and $u_E^{(2)}$ to \eqref{eq:ODEpos} as
\begin{equation}\label{eq:v1v2soll}
 \begin{split}
  u_E^{(1)}(x)\;:=\;\sqrt{x}\, H_{\frac{1+\alpha}{2}}^{(1)}(x\sqrt{E})\,, \\
  u_E^{(2)}(x)\;:=\;\sqrt{x}\, H_{\frac{1+\alpha}{2}}^{(2)}(x\sqrt{E})\,. 
 \end{split}
\end{equation}
In turn, the general solution to \eqref{eq:ODEpm} on $\mathbb{R}$ has the form
\begin{equation}\label{eq:generalgEscatt}
 \;g_E(x)\;\equiv\;\begin{pmatrix} g_E^-(x) \\ g_E^+(x) \end{pmatrix}\;=\;
 \begin{pmatrix}
  A_1^- u_E^{(1)}(-x)+A_2^-u_E^{(2)}(-x) \\
  A_1^+ u_E^{(1)}(x)+A_2^+u_E^{(2)}(x)
 \end{pmatrix},\qquad A_1^\pm,A_2^\pm\in\mathbb{C}\,.
\end{equation}

From \eqref{eq:v1v2soll} and from the asymptotics at short \cite[Eq.~(9.1.10)]{Abramowitz-Stegun-1964} and large \cite[Eq.~(9.2.3)-(9.2.4)]{Abramowitz-Stegun-1964} distances for the Hankel functions we compute
\begin{equation}\label{eq:Hankel-small}
 \begin{split}
 u_E^{(1)}(x)\;&\stackrel{x\downarrow 0}{=}\;-\frac{\ii\,2^{\frac{1+\alpha}{2}}}{\,E^{\frac{1+\alpha}{4}}\Gamma(\frac{1-\alpha}{2})\sin(\frac{1+\alpha}{2}\pi)}\,x^{-\frac{\alpha}{2}} \\
  &\qquad\qquad+\frac{\ii\,E^{\frac{1+\alpha}{4}}e^{-\ii\pi\frac{1+\alpha}{2}}}{2^{\frac{1+\alpha}{2}}\Gamma(\frac{3+\alpha}{2})\sin(\frac{1+\alpha}{2}\pi)}\,x^{1+\frac{\alpha}{2}}+O(x^{2-\frac{\alpha}{2}})\,, \\
  u_E^{(2)}(x)\;&\stackrel{x\downarrow 0}{=}\;\frac{\ii\,2^{\frac{1+\alpha}{2}}}{\,E^{\frac{1+\alpha}{4}}\Gamma(\frac{1-\alpha}{2})\sin(\frac{1+\alpha}{2}\pi)}\,x^{-\frac{\alpha}{2}} \\
  &\qquad\qquad-\frac{\ii\,E^{\frac{1+\alpha}{4}}e^{\ii\pi\frac{1+\alpha}{2}}}{2^{\frac{1+\alpha}{2}}\Gamma(\frac{3+\alpha}{2})\sin(\frac{1+\alpha}{2}\pi)}\,x^{1+\frac{\alpha}{2}}+O(x^{2-\frac{\alpha}{2}})\,,
 \end{split}
\end{equation}
and 
\begin{equation}\label{eq:Hankel-large}
 \begin{split}
  u_E^{(1)}(x)\;&\stackrel{x\to+\infty}{=}\;\sqrt{\frac{2}{\pi}}\,E^{-\frac{1}{4}}\,e^{i(x\sqrt{E}-\frac{\pi}{4}(2+\alpha))}\,(1+O(x^{-1}))\,, \\
  u_E^{(2)}(x)\;&\stackrel{x\to+\infty}{=}\;\sqrt{\frac{2}{\pi}}\,E^{-\frac{1}{4}}\,e^{-i(x\sqrt{E}-\frac{\pi}{4}(2+\alpha))}\,(1+O(x^{-1}))\,.
 \end{split}
\end{equation}
Incidentally, \eqref{eq:Hankel-large} explains the convenience of choosing $H_{\frac{1+\alpha}{2}}^{(1)},H_{\frac{1+\alpha}{2}}^{(2)}$ to express a pair of linearly independent solutions to \eqref{eq:ODEpos}: at large distances, $u_E^{(1)}$ and $u_E^{(2)}$ behave as plane waves, with an evident advantage for their interpretation in the scattering arguments that follow.

The local behaviour of the general solution \eqref{eq:generalgEscatt} around $x=0$, and concretely speaking the boundary values \eqref{eq:bilimitsg0g1}, are given by
\begin{equation}\label{eq:boundarygpmscatt}
 \begin{split}
  g_{E,0}^\pm\;&=\;\frac{\ii\,2^{\frac{1+\alpha}{2}}}{\,E^{\frac{1+\alpha}{4}}\Gamma(\frac{1-\alpha}{2})\sin(\frac{1+\alpha}{2}\pi)}\,(A_2^\pm-A_1^\pm)\,,  \\
  g_{E,1}^\pm\;&=\;\frac{E^{\frac{1+\alpha}{4}}e^{-\ii\pi\frac{\alpha}{2}}}{2^{\frac{1+\alpha}{2}}\Gamma(\frac{3+\alpha}{2})\sin(\frac{1+\alpha}{2}\pi)}\,(A_1^\pm + e^{\ii\pi\alpha}A_2^\pm)\,,
 \end{split}
\end{equation}
as is easy to compute from \eqref{eq:Hankel-small} and \eqref{eq:bilimitsg0g1}.

Let us focus, in particular, on the $\mathrm{II}_a$-type scattering.

 \begin{proposition}\label{prop:scatteringIIa} Let $\alpha\in[0,1)$, $a\in\mathbb{C}$, and $\gamma\in\mathbb{R}$.
 \begin{itemize}
  \item[(i)] The one-dimensional Schr\"{o}dinger scattering governed by the Hamiltonian $A_{\alpha,a}^{[\gamma]}(0)$ has transmission and reflection coefficients equal respectively to
   \begin{equation}\label{eq:TR-IIa}
  \begin{split}
   T_{\alpha,a,\gamma}(E)\;&=\;\left|\frac{\,E^{\frac{1+\alpha}{2}}(1+e^{\ii\pi\alpha})\,\Gamma(\frac{1-\alpha}{2})\,\overline{a}}{\,E^{\frac{1+\alpha}{2}}\Gamma(\frac{1-\alpha}{2})(1+|a|^2)+\ii\,\gamma\,2^{1+\alpha}e^{\ii\frac{\pi}{2}\alpha}\Gamma(\frac{3+\alpha}{2})} \right|^2 \,, \\
   R_{\alpha,a,\gamma}(E)\;&=\;\left|\frac{\,E^{\frac{1+\alpha}{2}}\Gamma(\frac{1-\alpha}{2})\,(1-|a|^2\,e^{\ii\pi\alpha})+\ii\,\gamma\,2^{1+\alpha}e^{\ii\frac{\pi}{2}\alpha}\Gamma(\frac{3+\alpha}{2})}{\,E^{\frac{1+\alpha}{2}}\Gamma(\frac{1-\alpha}{2})(1+|a|^2)+\ii\,\gamma\,2^{1+\alpha}e^{\ii\frac{\pi}{2}\alpha}\Gamma(\frac{3+\alpha}{2})}\right|^2\,,
  \end{split}
 \end{equation}
 and
 \begin{equation}\label{eq:TpEi1}
  T_{\alpha,a,\gamma}(E)+R_{\alpha,a,\gamma}(E)\;=\;1\,.
 \end{equation}
 \item[(ii)] Reflection and transmission coefficients are independent of energy $E$ for all extensions with $\gamma=0$.
 \item[(iii)] The scattering is reflection-less ($R_{\alpha,a,\gamma}(E)=0$) when
 \begin{equation}\label{eq:Etransm}
   E\;=\;\bigg(\frac{\,2^{1+\alpha}\,\gamma\,\Gamma(\frac{3+\alpha}{2})\sin\frac{\pi}{2}\alpha\,}{\,\Gamma(\frac{1-\alpha}{2})(1-\cos\pi\alpha)\,}\bigg)^{\!\frac{2}{1+\alpha}},	
 \end{equation}
 provided that $\alpha\in(0,1)$, $|a|=1$, and $\gamma>0$. 
 \item[(iv)] In the high energy limit the scattering is independent of the extension parameter $\gamma$ and one has
 \begin{equation}\label{eq:highenergyscatt}
  \begin{split}
   \lim_{E\to+\infty}T_{\alpha,a,\gamma}(E)\;&=\;\frac{\,2\,|a|^2(1+\cos\pi\alpha)}{(1+|a|^2)^2}\,, \\
   \lim_{E\to+\infty}R_{\alpha,a,\gamma}(E)\;&=\;\frac{\,1+|a|^4-2|a|^2\cos\pi\alpha}{(1+|a|^2)^2}\,.
  \end{split}
 \end{equation}
 \item[(v)] In the low energy limit, for $\gamma\neq 0$,
  \begin{equation}\label{eq:lowenergyscatt}
  \begin{split}
   \lim_{E\downarrow 0}T_{\alpha,a,\gamma}(E)\;&=\;0\,, \\
   \lim_{E\downarrow 0}R_{\alpha,a,\gamma}(E)\;&=\;1\,.
  \end{split}
 \end{equation}
 \end{itemize} 
 \end{proposition}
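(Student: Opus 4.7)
The plan is a standard time-independent one-dimensional scattering analysis for $A_{\alpha,a}^{[\gamma]}(0)$, starting from the generalised eigenfunction \eqref{eq:generalgEscatt} at energy $E>0$. First I would identify from the large-distance asymptotics \eqref{eq:Hankel-large} which of the four amplitudes $A_1^\pm,A_2^\pm$ multiplies an incoming versus outgoing plane wave: at $x\to-\infty$, $A_2^-$ is the right-moving (incident) amplitude and $A_1^-$ the left-moving (reflected) one; at $x\to+\infty$, $A_1^+$ is the right-moving (transmitted) amplitude and $A_2^+$ the left-moving one (incoming from the right). Imposing unit incident flux from the left, $A_2^-=1$, $A_2^+=0$, and writing $r\equiv A_1^-$, $t\equiv A_1^+$, the common plane-wave prefactor $\sqrt{2/\pi}\,E^{-1/4}$ on both sides implies no flux-ratio correction and $R_{\alpha,a,\gamma}(E)=|r|^2$, $T_{\alpha,a,\gamma}(E)=|t|^2$.

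Next I would impose the type-$\mathrm{II}_a$ boundary conditions \eqref{eq:extAak-IIa} via the explicit boundary values \eqref{eq:boundarygpmscatt}. The first condition $g_0^+=a\,g_0^-$ immediately yields $t=a(r-1)$; substituting into $g_1^-+\overline{a}\,g_1^+=\gamma\,g_0^-$ produces a single linear equation in $r$ whose solution, simplified through the explicit ratio
\[
 \frac{C_0}{C_1}\;=\;\frac{\,i\,2^{1+\alpha}\,\Gamma(\tfrac{3+\alpha}{2})\,e^{i\pi\alpha/2}}{E^{(1+\alpha)/2}\,\Gamma(\tfrac{1-\alpha}{2})}\,,
\]
gives closed-form expressions for $r$ and $t$; taking moduli squared produces the formulas \eqref{eq:TR-IIa}. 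Parts (ii), (iv) and (v) then follow by inspection of \eqref{eq:TR-IIa}: when $\gamma=0$ the factor $E^{(1+\alpha)/2}\Gamma(\tfrac{1-\alpha}{2})$ is common to numerator and denominator and cancels; as $E\to+\infty$ the same cancellation occurs asymptotically, producing the $\gamma$-independent limits \eqref{eq:highenergyscatt}; as $E\downarrow 0$ with $\gamma\neq 0$ the constant $\gamma$-terms dominate, forcing $T\to 0$ and $R\to 1$.

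The conservation law \eqref{eq:TpEi1} I would verify by direct algebraic expansion of $|r|^2+|t|^2$: the $\gamma$-linear terms in numerator and denominator match identically, while the $\gamma$-independent pieces collect into $B^2(1+|a|^2)^2$ via the identity $|\,|a|^2-e^{i\pi\alpha}|^2+2|a|^2(1+\cos\pi\alpha)=(1+|a|^2)^2$. For the reflection-less condition (iii) I would set the numerator of $r$ to zero and separate real and imaginary parts: using $\sin\pi\alpha=2\sin\tfrac{\pi\alpha}{2}\cos\tfrac{\pi\alpha}{2}$ and $1-\cos\pi\alpha=2\sin^2\tfrac{\pi\alpha}{2}$, the imaginary part isolates $E^{(1+\alpha)/2}$ precisely as in \eqref{eq:Etransm}, while compatibility with the real part forces $|a|^2=1$, and positivity of the resulting expression then imposes $\gamma>0$ and $\alpha\in(0,1)$. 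The main technical obstacle I anticipate is phase bookkeeping in the modified-Bessel expansions \eqref{eq:Hankel-small}: the direct linear-system solution yields the numerator of $r$ naturally in the form $A+B(|a|^2-e^{i\pi\alpha})$, with $A=i\gamma\,2^{1+\alpha}\Gamma(\tfrac{3+\alpha}{2})e^{i\pi\alpha/2}$ and $B=E^{(1+\alpha)/2}\Gamma(\tfrac{1-\alpha}{2})$, rather than the displayed $A+B(1-|a|^2 e^{i\pi\alpha})$. Reconciling the two requires the observation that their difference $B(|a|^2-1)(1+e^{i\pi\alpha})$, paired with $A$ in the cross term of $|A+u|^2-|A+v|^2$, produces the purely imaginary quantity $A(1+e^{-i\pi\alpha})=2i\gamma\,2^{1+\alpha}\Gamma(\tfrac{3+\alpha}{2})\cos\tfrac{\pi\alpha}{2}$, so the two forms yield the same modulus and the presentation \eqref{eq:TR-IIa} is legitimate.
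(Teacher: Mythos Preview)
Your approach is correct and essentially identical to the paper's: the same generalised eigenfunction \eqref{eq:generalgEscatt}, the same identification of incoming/outgoing amplitudes via \eqref{eq:Hankel-large}, and the same linear system from the boundary values \eqref{eq:boundarygpmscatt} plugged into \eqref{eq:extAak-IIa}. The only procedural difference is that the paper carries out \emph{both} scattering experiments (incidence from $+\infty$ with $A_2^+=1,\,A_2^-=0$, and incidence from $-\infty$ with $A_2^-=1,\,A_2^+=0$) and then checks $T^{(\leftarrow)}=T^{(\rightarrow)}$, $R^{(\leftarrow)}=R^{(\rightarrow)}$; the displayed numerators $\overline{a}$ and $(1-|a|^2 e^{\ii\pi\alpha})$ in \eqref{eq:TR-IIa} come directly from the right-incidence computation, which is why your single left-incidence calculation naturally produces $a$ and $(|a|^2-e^{\ii\pi\alpha})$ instead. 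Your modulus-equality argument to reconcile the two forms is exactly the content of the paper's one-line claim that the left and right reflection coefficients coincide, so nothing is missing.
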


\begin{proof} The generalised eigenfunctions $g_E$ for $A_{\alpha,a}^{[\gamma]}$ at energy $E>0$ have the form \eqref{eq:generalgEscatt} with parameters $A_1^\pm,A_2^\pm$ such that the corresponding boundary values \eqref{eq:boundarygpmscatt} satisfy
 \[
  \begin{cases}
   \qquad g_{E,0}^+\,=\,a\,g_{E,0}^- \\
   \;g_{E,1}^-+\overline{a}g_{E,1}^+\,=\,\gamma\,g_{E,0}^-
  \end{cases}
 \]
 (see \eqref{eq:extAak-IIa} above), whence
 \[\tag{a}\label{eq:taaac1}
  \begin{cases}
   \qquad A_2^+-A_1^+\,=\,a(A_2^--A_1^-) \\
   \;\displaystyle\frac{\,E^{\frac{1+\alpha}{2}}e^{-\ii\pi\frac{\alpha}{2}}\Gamma(\frac{1-\alpha}{2})}{2^{1+\alpha}\Gamma(\frac{3+\alpha}{2})}\,(A_1^-+\overline{a}\,A_1^+ + e^{\ii\pi\alpha}(A_2^-+\overline{a}\,A_2^+)\,= \\
   \;\qquad\qquad\qquad \;=\,\ii\,\gamma(A_2^--A_1^-)\,.
  \end{cases}
 \]

 By standard scattering arguments, and in view of the large distance asymptotics \eqref{eq:Hankel-large}, the occurrence of an incident \emph{plane wave} $e^{-\ii x\sqrt{E}}$ from $+\infty$ towards the origin (with ``momentum'' $\sqrt{E}$), producing after the scattering a transmitted plane wave $e^{-\ii x\sqrt{E}}$ towards $-\infty$ and a reflected plane wave $e^{\ii x\sqrt{E}}$ back towards $+\infty$, corresponds to the choice
 \[\tag{b}\label{eq:taaac2}
  A_2^+\,=\,1\,,\qquad A_2^-\,=\,0\,.
 \]
 Indeed, in this case \eqref{eq:generalgEscatt} takes the asymptotic form
 \[
  g_E(x)\;\sim\;
   {\textstyle\sqrt{\frac{2}{\pi}}}\,E^{-\frac{1}{4}}e^{\ii\frac{\pi}{4}(2+\alpha)}\begin{pmatrix}
   A_1^-\,e^{-\ii\frac{\pi}{2}(2+\alpha)}\,e^{-\ii x\sqrt{E}} \\
   A_1^+\,e^{-\ii\frac{\pi}{2}(2+\alpha)}\,e^{\ii x\sqrt{E}}+e^{-\ii x\sqrt{E}}
  \end{pmatrix}\qquad\textrm{as }|x|\to +\infty\,,
 \]
 with conventionally unit amplitude for the source incident plane wave from $+\infty$ (up to the overall multiplicative pre-factor $\sqrt{\frac{2}{\pi}}\,E^{-\frac{1}{4}}e^{\ii\frac{\pi}{4}(2+\alpha)}$), and clearly no incident plane wave from $-\infty$. The transmission and reflection coefficients for the scattering are then equal respectively to
 \[
   T^{(\leftarrow)}_{\alpha,a,\gamma}(E)\;=\;|A_1^-|^2\,,\qquad R^{(\leftarrow)}_{\alpha,a,\gamma}(E)\;=\;|A_1^+|^2\,.
 \]

 Solving \eqref{eq:taaac1} with the condition \eqref{eq:taaac2} yields
 \[
  \begin{split}
   A_1^-\;&=\;-\frac{\,E^{\frac{1+\alpha}{2}}(1+e^{\ii\pi\alpha})\,\Gamma(\frac{1-\alpha}{2})\,\overline{a}}{\,E^{\frac{1+\alpha}{2}}\Gamma(\frac{1-\alpha}{2})(1+|a|^2)+\ii\,\gamma\,2^{1+\alpha}e^{\ii\frac{\pi}{2}\alpha}\Gamma(\frac{3+\alpha}{2})} \\
   A_1^+\;&=\;\frac{\,E^{\frac{1+\alpha}{2}}\Gamma(\frac{1-\alpha}{2})\,(1-|a|^2\,e^{\ii\pi\alpha})+\ii\,\gamma\,2^{1+\alpha}e^{\ii\frac{\pi}{2}\alpha}\Gamma(\frac{3+\alpha}{2})}{\,E^{\frac{1+\alpha}{2}}\Gamma(\frac{1-\alpha}{2})(1+|a|^2)+\ii\,\gamma\,2^{1+\alpha}e^{\ii\frac{\pi}{2}\alpha}\Gamma(\frac{3+\alpha}{2})}\,.
  \end{split}
 \]

 Mimicking the reasoning above, the occurrence of an incident plane wave $e^{\ii x\sqrt{E}}$ from $-\infty$ towards the origin, producing after the scattering a transmitted plane wave $e^{\ii x\sqrt{E}}$ towards $+\infty$ and a reflected plane wave $e^{-\ii x\sqrt{E}}$ back towards $-\infty$, corresponds to the choice
 \[\tag{c}\label{eq:taaac3}
  A_2^-\,=\,1\,,\qquad A_2^+\,=\,0\,,
 \]
 in which case \eqref{eq:generalgEscatt} becomes
 \[
  g_E(x)\;\sim\;
   {\textstyle\sqrt{\frac{2}{\pi}}}\,E^{-\frac{1}{4}}e^{\ii\frac{\pi}{4}(2+\alpha)}\begin{pmatrix}
   A_1^-\,e^{-\ii\frac{\pi}{2}(2+\alpha)}\,e^{-\ii x\sqrt{E}}+e^{\ii x\sqrt{E}} \\
   A_1^+\,e^{-\ii\frac{\pi}{2}(2+\alpha)}\,e^{\ii x\sqrt{E}}
  \end{pmatrix}\qquad\textrm{as }|x|\to +\infty\,.
 \]
 The transmission and reflection coefficients for the scattering are now, respectively,
 \[
   T^{(\rightarrow)}_{\alpha,a,\gamma}(E)\;=\;|A_1^+|^2\,,\qquad R^{(\rightarrow)}_{\alpha,a,\gamma}(E)\;=\;|A_1^-|^2\,.
 \]
  Solving \eqref{eq:taaac1} with the new condition \eqref{eq:taaac3} yields
 \[
  \begin{split}
   A_1^-\;&=\;\frac{\,E^{\frac{1+\alpha}{2}}\Gamma(\frac{1-\alpha}{2})\,(|a|^2-\,e^{\ii\pi\alpha})+\ii\,\gamma\,2^{1+\alpha}e^{\ii\frac{\pi}{2}\alpha}\Gamma(\frac{3+\alpha}{2})}{\,E^{\frac{1+\alpha}{2}}\Gamma(\frac{1-\alpha}{2})(1+|a|^2)+\ii\,\gamma\,2^{1+\alpha}e^{\ii\frac{\pi}{2}\alpha}\Gamma(\frac{3+\alpha}{2})}\,, \\
   A_1^+\;&=\;-\frac{\,E^{\frac{1+\alpha}{2}}(1+e^{\ii\pi\alpha})\,\Gamma(\frac{1-\alpha}{2})\,a}{\,E^{\frac{1+\alpha}{2}}\Gamma(\frac{1-\alpha}{2})(1+|a|^2)+\ii\,\gamma\,2^{1+\alpha}e^{\ii\frac{\pi}{2}\alpha}\Gamma(\frac{3+\alpha}{2})}\,.
  \end{split}
 \]
 An easy computation shows that 
 \[
  \begin{split}
   T^{(\rightarrow)}_{\alpha,a,\gamma}(E)\;=\;T^{(\leftarrow)}_{\alpha,a,\gamma}(E)\;&=:\;T_{\alpha,a,\gamma}(E) \\
   R^{(\rightarrow)}_{\alpha,a,\gamma}(E)\;=\;R^{(\leftarrow)}_{\alpha,a,\gamma}(E)\;&=:\;R_{\alpha,a,\gamma}(E)\,,
  \end{split}
 \]
 whence the final expressions \eqref{eq:TR-IIa}, as well as the validity of \eqref{eq:TpEi1} (see also Remark \ref{rem:Jcont} below). This proves part (i).

 All the claims of part (ii), (iv), and (v) then follow straightforwardly from  \eqref{eq:TR-IIa}. Concerning part (iii), one sees from \eqref{eq:TR-IIa} that the scattering is reflection-less when
 \begin{equation*}\tag{d}\label{eq:taaac4}
   E\;=\;\bigg(\frac{\,\ii\,\gamma\,2^{1+\alpha}\,e^{\ii\frac{\pi}{2}\alpha}\,\Gamma(\frac{3+\alpha}{2})}{(|a|^2 e^{\ii\pi\alpha}-1)\Gamma(\frac{1-\alpha}{2})}\bigg)^{\!\frac{2}{1+\alpha}},
 \end{equation*}
 provided that
 \[
  0\;<\;\frac{\ii\gamma\,e^{\ii\frac{\pi}{2}\alpha}}{|a|^2e^{\ii\pi\alpha}-1}\;=\;\frac{\,(|a|^2+1)\gamma\sin\frac{\pi}{2}\alpha+\ii(|a|^2-1)\gamma\cos\frac{\pi}{2}\alpha\,}{(|a|^2\cos\pi\alpha-1)^2+|a|^4\sin^2\pi\alpha}\,,
 \]
 a condition that is only satisfied when $\alpha\in(0,1)$, \emph{and} $|a|=1$, \emph{and} $\gamma>0$. If this is the case, the quantity computed above becomes
 \[
  \frac{\gamma\sin\frac{\pi}{2}\alpha}{\,1-\cos\pi\alpha\,}
 \]
 and the expression \eqref{eq:taaac4} takes finally the form \eqref{eq:Etransm}. 
\end{proof}

 \begin{remark}\label{rem:Jcont}
  The identity $ T_{\alpha,a,\gamma}(E)+R_{\alpha,a,\gamma}(E)=1$ was checked directly in the proof above. In fact, it could have been claimed a priori, as one does with the scattering governed by a Schr\"{o}dinger operator $H_{\mathrm{Schr}}=-\frac{\ud^2}{\ud x^2}+V$ with smooth and fast decaying real potential $V$, but with a noticeable difference in the reasoning. For  $H_{\mathrm{Schr}}$, the problem $H_{\mathrm{Schr}}g=Eg$ with $E>0$ is an ODE on $\mathbb{R}$ whose (two linearly independent) solutions are continuous, and with continuous derivative, over the whole $\mathbb{R}$. For any such $g$, the probability current
  \begin{equation}\label{eq:current}
   J_g(x)\;:=\;-\ii\big(\overline{g(x)}g'(x)-g(x)\overline{g'(x)}\big)\;=\;2\,\mathfrak{Im}\big(\overline{g(x)}g'(x)\big)
  \end{equation}
  is actually conserved $\forall x\in\mathbb{R}$ because
  \[
   J_g'(x)\;=\;2\,\mathfrak{Im}\big(\overline{g(x)}g''(x)\big)\;=\;2\,\mathfrak{Im}\big((V(x)-E)|g(x)|^2\big)\;=\;0
  \]
  (having used the identity $-g''+Vg=Eg$ and the reality of $V$). Mimicking the present setting, one could also reason by saying that on each open half line $\mathbb{R}^\pm$ the ODE $H_{\mathrm{Schr}}g=Eg$ gives rise to a conserved current, and the left and right currents do coincide because, taking the limit $x\to 0^\pm$ in \eqref{eq:current}, one exploits the continuity of $g$ and $g'$ at $x=0$. In turn, the conserved current allows one to conclude the following: for a solution $g$ to $H_{\mathrm{Schr}}g=Eg$ with asymptotics
  \begin{equation}\label{eq:rtsol}
   g(x)\;\sim\;
   \begin{cases}
    \;e^{\ii x\sqrt{E}}+r\,e^{-\ii x\sqrt{E}} & \textrm{as } x\to -\infty \\
    \;\qquad t\,e^{\ii x\sqrt{E}} & \textrm{as } x\to +\infty
   \end{cases}
  \end{equation}
  (such $g$ does exists, as $V$ has fast decrease) a simple calculation yields
  \begin{equation}
   \lim_{x\to-\infty}J_g(x)\;=\;2\sqrt{E}\,(1-|r|^2)\,,\qquad  \lim_{x\to+\infty}J_g(x)\;=\;2\sqrt{E}\,|t|^2\,,
  \end{equation}
  whence $|t|^2+|r|^2=1$ because $J_g(x)$ is constant. This gives the conservation of the incident flux into the sum of reflected and transmitted flux. In the present case, \eqref{eq:ODEpos} is a differential problem consisting of two separate ODEs on each half line, to which one superimposes the boundary condition at $x=0$ characteristic for the operator $A_{\alpha,a}^{[\gamma]}$. Precisely as in the ordinary Schr\"{o}dinger case, one has probability currents
    \begin{equation}\label{eq:currents}
   J^\pm_g(x)\;:=\;2\,\mathfrak{Im}\big(\overline{g^\pm(x)}(g^\pm)'(x)\big)
  \end{equation}
  on $\mathbb{R}^\pm$, each of which is conserved, and such that for a solution of type \eqref{eq:rtsol}
    \begin{equation}
   \lim_{x\to-\infty}J^-_g(x)\;=\;2\sqrt{E}\,(1-|r|^2)\,,\qquad  \lim_{x\to+\infty}J^+_g(x)\;=\;2\sqrt{E}\,|t|^2\,.
  \end{equation}
  However, the overall conservation $J^-=J^+$, which would lead again to $|t|^2+|r|^2=1$, cannot follow from the continuity of $g$ and $g'$ at $x=0$ as for the ordinary Schr\"{o}dinger case: such functions actually diverge as $|x|\to 0$, more precisely
  \begin{equation}\label{eq:asintg}
   \begin{split}
       g(x)\;&=\;
   \begin{cases}
    \;g_0^-(-x)^{-\frac{\alpha}{2}}+g_1^-(-x)^{1+\frac{\alpha}{2}}+o(x^{\frac{3}{2}}) & \textrm{as } x\uparrow 0^- \\
    \;g_0^+x^{-\frac{\alpha}{2}}+g_1^+ x^{1+\frac{\alpha}{2}}+o(x^{\frac{3}{2}}) & \textrm{as } x\downarrow 0^+
   \end{cases} \\
   g'(x)\;&=\;
   \begin{cases}
    \;\frac{\alpha}{2}\,g_0^-(-x)^{-(1+\frac{\alpha}{2})}-(1+\frac{\alpha}{2})\,g_1^-(-x)^{\frac{\alpha}{2}}+o(x^{\frac{1}{2}}) & \textrm{as } x\uparrow 0^- \\
    \;-\frac{\alpha}{2}\,g_0^+ x^{-(1+\frac{\alpha}{2})}+(1+\frac{\alpha}{2})\,g_1^+ x^{\frac{\alpha}{2}}+o(x^{\frac{1}{2}}) & \textrm{as } x\downarrow 0^+\,,
   \end{cases}
   \end{split}
  \end{equation}
  with
   \begin{equation}\label{eq:againcond}
  \begin{cases}
   \qquad g_{0}^+\,=\,a\,g_{0}^- \\
   \;g_{1}^-+\overline{a}g_{1}^+\,=\,\gamma\,g_{0}^-\,.
  \end{cases}
 \end{equation}
    A new check is therefore needed: from \eqref{eq:currents} and \eqref{eq:asintg} one computes
  \begin{equation}
   J^\pm\;:=\;\lim_{x\to 0^\pm}J^\pm(x)\;=\;\pm \,2\,(1+\alpha)\,\mathfrak{Im}\big(\overline{g_0^\pm}\,g_1^\pm \big)
  \end{equation}
 and using \eqref{eq:againcond} one finds
  \begin{equation}
   \begin{split}
    J^-\;&=\;-2(1+\alpha)\,\mathfrak{Im}\Big(\frac{1}{\overline{a}}\overline{g_0^+}\cdot\Big(\gamma\,\frac{1}{a}g_0^+-\overline{a}g_1^+\Big)\Big) \\
    &=\;2(1+\alpha)\,\mathfrak{Im}\big(\overline{g_0^+}\,g_1^+ \big)\;=\;J^+\,.
   \end{split}
  \end{equation}
 This finally establishes the a priori information that $ T_{\alpha,a,\gamma}(E)+R_{\alpha,a,\gamma}(E)=1$.  
   \end{remark}

\section{Reconstruction of the spectral content of the fibred extensions}

The previous results on each fibre from Section \ref{sec:spectral-in-fibre} can be now assembled together to produce the spectral analysis of the fibred operators \eqref{eq:HalphaFriedrichs_unif-fibred}-\eqref{eq:Halpha-III_unif-fibred}.

\subsection{Spectrum of the direct sum}\label{subsec:SpectrumDS}~

We shall establish here the following auxiliary result.

\begin{proposition}\label{prop:spectra-in-direct-sum}
 Consider the Hilbert space orthogonal direct sum $\cH=\bigoplus_{k\in\mathbb{Z}}\mathfrak{h}_k$. Let $(S_k)_{k\in\mathbb{Z}}$ be a collection of self-adjoint operators, the $k$-th of which acts in $\mathfrak{h}_k$, and let $S$ be the self-adjoint operator direct sum $S=\bigoplus_{k\in\mathbb{Z}}S_k$ acting in $\cH$. Then:
 \begin{eqnarray}
  \sigma(S) \!\!&=&\!\!\overline{\bigcup_{k \in \mathbb{Z}} \sigma(S_k)}\,, \label{eq:spTspTk}\\
  \sigma_{\mathrm{ess}}(S) \!\!&\supset&\!\! \bigcup_{k \in \mathbb{Z}}\sigma_{\mathrm{ess}}(S_k)\,, \label{eq:spessTspessTk} \\
   \sigma_{\mathrm{disc}}(S) \!\!&\subset&\!\! \bigcup_{k \in \mathbb{Z}}\sigma_{\mathrm{disc}}(S_k)\,. \label{eq:spdiscTspdiscTk}
 \end{eqnarray}
\end{proposition}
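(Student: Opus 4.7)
The strategy is to address the three claims one by one using the fundamental fact that for a self-adjoint direct sum $S=\bigoplus_k S_k$, at any $\lambda\in\rho(S)$ the resolvent coincides with the orthogonal direct sum $\bigoplus_k (S_k-\lambda)^{-1}$ and therefore has operator norm $\sup_k\|(S_k-\lambda)^{-1}\|$. To prove \eqref{eq:spTspTk}, I would combine this with the self-adjoint identity $\|(S_k-\lambda)^{-1}\|=\mathrm{dist}(\lambda,\sigma(S_k))^{-1}$ to characterise $\rho(S)$ as the set of $\lambda$'s for which $\inf_k\mathrm{dist}(\lambda,\sigma(S_k))>0$; the latter coincides with $\mathbb{C}\setminus\overline{\bigcup_k\sigma(S_k)}$, yielding the claimed equality by complementation.

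For \eqref{eq:spessTspessTk}, the plan is a direct application of Weyl's criterion: any singular sequence $(\psi_n)\subset\mathcal{D}(S_k)$ witnessing $\lambda\in\sigma_{\mathrm{ess}}(S_k)$ can be lifted to $\cH$ simply by placing zeros in all other fibres. The lifted sequence is still normalised, weakly null, and satisfies $\|(S-\lambda)\psi_n\|=\|(S_k-\lambda)\psi_n\|\to 0$, which places $\lambda$ in $\sigma_{\mathrm{ess}}(S)$. For \eqref{eq:spdiscTspdiscTk}, I would argue as follows: if $\lambda\in\sigma_{\mathrm{disc}}(S)$, then by \eqref{eq:spTspTk} one has $\lambda\in\overline{\bigcup_k\sigma(S_k)}$, and isolation of $\lambda$ in $\sigma(S)$ forces $\lambda\in\sigma(S_{k_0})$ for some $k_0$ with $\lambda$ also isolated in that fibre spectrum. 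Finite multiplicity then propagates through the decomposition $\ker(S-\lambda)=\bigoplus_k\ker(S_k-\lambda)$: since the left-hand side is finite-dimensional, so is each $\ker(S_k-\lambda)$, and in particular $\lambda\in\sigma_{\mathrm{disc}}(S_{k_0})$.

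The principal subtlety, and in my view the main obstacle, lies in the passage to the closure in \eqref{eq:spTspTk}: a point $\lambda$ may well satisfy $\lambda\in\rho(S_k)$ for every $k$ and yet still fall in $\sigma(S)$ whenever the fibre resolvent norms fail to be uniformly bounded, which happens precisely at accumulation points of $\bigcup_k\sigma(S_k)$. Self-adjointness is essential here because it reduces uniform norm-boundedness of $(S_k-\lambda)^{-1}$ to the purely geometric condition $\mathrm{dist}(\lambda,\bigcup_k\sigma(S_k))>0$; without this reduction, the clean closure formula on the right-hand side of \eqref{eq:spTspTk} would not follow directly from the fibrewise resolvent identity, and one would be forced into a heavier argument via spectral measures.
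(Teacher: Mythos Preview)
Your proposal is correct and follows essentially the same route as the paper: the singular-sequence lift for \eqref{eq:spessTspessTk} and the eigenvector/kernel decomposition for \eqref{eq:spdiscTspdiscTk} are identical in spirit to the paper's arguments. For \eqref{eq:spTspTk} the paper phrases the key step as $\rho(S)=\mathrm{int}\big(\bigcap_k\rho(S_k)\big)$ and then complements, whereas you invoke the self-adjoint identity $\|(S_k-\lambda)^{-1}\|=\mathrm{dist}(\lambda,\sigma(S_k))^{-1}$ directly to characterise $\rho(S)$ as $\{\lambda:\mathrm{dist}(\lambda,\bigcup_k\sigma(S_k))>0\}$; these are two equivalent packagings of the same uniform-boundedness criterion, and your version is marginally more streamlined.
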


\begin{remark}
Inclusions \eqref{eq:spessTspessTk} and \eqref{eq:spdiscTspdiscTk} may well be strict. Indeed, $\sigma_{\mathrm{ess}}(S)$ may contain accumulation points of eigenvalues, or eigenvalues with infinite multiplicity that are not contained in any $\sigma_{\mathrm{ess}}(S_k)$, and $\sigma_{\mathrm{disc}}(S_k)$ may contain eigenvalues that are contained in $\sigma_{\mathrm{ess}}(S_{k'})$ for $k' \neq k$.
\end{remark}

\begin{remark}
 In the direct sum operator structure $S=\bigoplus_{k\in\mathbb{Z}}S_k$, the self-adjointness of $S$ is equivalent to the self-adjointness of all the $S_k$'s (see, e.g., \cite[Lemma 2.2]{GMP-Grushin2-2020}).
\end{remark}

Prior to demonstrating Proposition \ref{prop:spectra-in-direct-sum}, let us recall a few standard properties of the operator direct sum, for which we skip the simple proof. If $S$ and the $S_k$'s are \emph{closed operators} with respect to the corresponding Hilbert spaces, so that the notion of resolvents $\rho(S)$ and $\rho(S_k)$ is non-trivial, then the invertibility of $S$ implies the invertibility of all the $S_k$'s (each one in the respective space), with $S^{-1}= \bigoplus_{k\in\mathbb{Z}}S_k^{-1}$. As a consequence,
\begin{equation}\label{eq:rho-in-intrhok}
 \rho(S)\;\subset\;\bigcap_{k\in\mathbb{Z}}\rho(S_k)\,.
\end{equation}

\begin{proof}[Proof of Proposition \ref{prop:spectra-in-direct-sum}]
In order to establish \eqref{eq:spTspTk}, let us first show that
\[\tag{*}\label{eq:rho-interior}
 \rho(S)\;=\;\mathrm{int} \bigg(\bigcap_{k \in \mathbb{Z}} \rho(S_k) \bigg),
\]
where `$\mathrm{int}$' here denotes the interior of the set (in the ordinary topology of $\mathbb{C}$). The `$\subset$'-inclusion follows from \eqref{eq:rho-in-intrhok} upon taking the interior on both sides, and using the fact that $\rho(S)=\mathrm{int}(\rho(S))$ (the resolvent set is open in $\mathbb{C}$). For the `$\supset$'-inclusion, pick $\lambda\in\mathrm{int} \big(\bigcap_{k \in \mathbb{Z}} \rho(S_k) \big)$. Each operator $S_k-\lambda\mathbbm{1}$ has then everywhere defined and bounded inverse in $\mathfrak{h}_k$. Moreover, because of the status of interior point, 
\[
 \varepsilon\;:=\;\mathrm{dist}\bigg(\lambda,\bigcup_{k \in \mathbb{Z}} \sigma(S_k)\bigg)\;=\;\mathrm{dist}\bigg(\lambda,\mathbb{C}\setminus\bigcap_{k \in \mathbb{Z}} \rho(S_k)\bigg)\;>\;0\,.
\]
On the subspace $\mathcal{D}$, dense in $\cH$, defined as
\[
 \mathcal{D}\;:=\;\big\{(\psi_k)_{k\in\mathbb{Z}}\in\cH\,\big|\,\psi_k\neq 0\textrm{ only for finitely many }k\big\}\,,
\]
the operator $\bigoplus_{k \in \mathbb{Z}} (S_k - \lambda \mathbbm{1}_k)^{-1}$ is well defined because the $k$-th summand is bounded in $\mathfrak{h}_k$ and when applied to an element of $\mathcal{D}$ the sum is only finite. Such operator is also bounded (in $\cH$) because, by a standard resolvent estimate,
\[
	\bigg\Vert \bigoplus_{k \in \mathbb{Z}} (S_k - \lambda \mathbbm{1}_k)^{-1} \bigg\Vert \; \leq \; \varepsilon^{-1}\,.
\]
Thus, it extends uniquely to an everywhere defined and bounded operator on the whole $\cH$ which is easily seen to invert $S-\lambda\mathbbm{1}$. This proves that $\lambda\in\rho(S)$.

Now that \eqref{eq:rho-interior} is established, we have
\[
 \begin{split}
  \sigma(S)\;&=\;\overline{\sigma(S)}\;=\;\overline{\,\mathbb{C}\setminus\mathrm{int} \bigg(\bigcap_{k \in \mathbb{Z}} \rho(S_k) \bigg)}\;=\;\overline{\,\mathbb{C}\setminus\bigcap_{k \in \mathbb{Z}} \rho(S_k)} \\
  &=\;\overline{\bigcup_{k \in \mathbb{Z}}\big(\mathbb{C}\setminus\rho(S_k)\big)}\;=\;\overline{\bigcup_{k \in \mathbb{Z}} \sigma(S_k)}\,.
 \end{split}
\]
In the first identity above we exploited the closedness of the spectrum; in the second we applied \eqref{eq:rho-interior}; in the third we used the property $\overline{\mathbb{C}\setminus X}=\overline{\mathbb{C}\setminus \mathrm{int}(X)}$. Thus, \eqref{eq:spTspTk} is proved.

Concerning \eqref{eq:spessTspessTk}, if $\lambda\in\sigma_{\mathrm{ess}}(S_k)$, then there exists a singular sequence in $\mathfrak{h}_k$ relative to $\lambda$. Since $\hh_k \subset \mathcal{H}$, such a sequence is also a singular sequence relative to $\lambda$ considered as a spectral point of $S$. Therefore, $\sigma_{\mathrm{ess}}(S_k)\subset\sigma_{\mathrm{ess}}(S)$. As $k$ was arbitrary, \eqref{eq:spessTspessTk} follows.

Last, concerning \eqref{eq:spdiscTspdiscTk}, pick $\lambda\in\sigma_{\mathrm{disc}}(S)$, i.e., $\lambda$ is an isolated eigenvalue of $S$ with finite multiplicity. Denoting by $\psi\equiv (\psi_k)_{k\in\mathbb{Z}}\in\cH$ one of the corresponding eigenvectors, one has $S\psi=\lambda \psi$, whence $S_k\psi_k=\lambda \psi_k$ $\forall k\in\mathbb{Z}$ with only finitely many non-vanishing $\psi_k$ (otherwise the finite multiplicity would be violated). For the finitely many $k$'s for which that happens, $\lambda$ is therefore an eigenvalue of $S_k$, it has necessarily finite multiplicity (again because otherwise as an eigenvalue of $S$ it would have infinite multiplicity), and it is isolated, because by assumption there is a sufficiently small neighbourhood of $\lambda$ containing, apart from $\lambda$ itself, only points of $\rho(S)$, and hence, owing to \eqref{eq:rho-in-intrhok}, only points of $\rho(S_k)$. This proves that $\lambda\in\sigma_{\mathrm{disc}}(S_k)$ and finally establishes \eqref{eq:spdiscTspdiscTk}.
\end{proof}

\subsection{Spectrum of the fibred extensions of $\mathscr{H}_\alpha$}

\begin{theorem}\label{thm:fibredHalpha-spectrum}
 Let $\alpha\in[0,1)$, $\gamma\in\mathbb{R}$, $a\in\mathbb{C}$, $\Gamma\equiv(\gamma_1,\gamma_2,\gamma_3,\gamma_4)\in\mathbb{R}^4$. The spectra of the self-adjoint extensions of type \eqref{eq:HalphaFriedrichs_unif-fibred}-\eqref{eq:Halpha-III_unif-fibred} of the operator $\mathscr{H}_\alpha$ defined in \eqref{eq:unitary_transf_pm}-\eqref{eq:actiondomainHalpha} have the following properties.
 \begin{itemize}
  \item[(i)] Essential spectrum:
  \begin{equation}
 \begin{split}
  &\sigma_{\mathrm{ess}}(\mathscr{H}_{\alpha,\mathrm{F}})\;=\;\sigma_{\mathrm{ess}}\big(\mathscr{H}_{\alpha,\mathrm{R}}^{[\gamma]}\big)\;=\;\sigma_{\mathrm{ess}}\big(\mathscr{H}_{\alpha,\mathrm{L}}^{[\gamma]}\big) \\
  &\quad =\;\sigma_{\mathrm{ess}}\big(\mathscr{H}_{\alpha,a}^{[\gamma]}\big)\;=\;\sigma_{\mathrm{ess}}\big(\mathscr{H}_{\alpha}^{[\Gamma]}\big)\;=\;[0,+\infty)\,.
 \end{split}
\end{equation}
 \item[(ii)] Discrete spectrum: the discrete spectrum of each such operator is only negative and consists of finitely many eigenvalues. In particular, any such operator is lower semi-bounded.
 \item[(iii)] For each considered extension, the essential spectrum $[0,+\infty)$ contains (countably) infinite embedded eigenvalues, each of finite multiplicity. There is no accumulation of embedded eigenvalues.
 \item[(iv)] $\mathscr{H}_{\alpha,\mathrm{F}}$ has no negative eigenvalue, thus no discrete spectrum.
 \item[(v)] Both $\mathscr{H}_{\alpha,\mathrm{R}}^{[\gamma]}$ and $\mathscr{H}_{\alpha,\mathrm{L}}^{[\gamma]}$ have negative spectrum if and only if $\gamma<0$, in which case their negative spectrum is finite, discrete, and consists respectively of $\mathcal{N}_-\big(\mathscr{H}_{\alpha,\mathrm{R}}^{[\gamma]}\big)$ and  $\mathcal{N}_-\big(\mathscr{H}_{\alpha,\mathrm{L}}^{[\gamma]}\big)$ negative eigenvalues, counted with multiplicity, with
 \begin{equation}\label{eq:multnegspec-IRIL}
  \mathcal{N}_-\big(\mathscr{H}_{\alpha,\mathrm{R}}^{[\gamma]}\big)\;=\;\mathcal{N}_-\big(\mathscr{H}_{\alpha,\mathrm{L}}^{[\gamma]}\big)\;=\;2\lfloor (1+\alpha)|\gamma|\rfloor +1\qquad(\gamma<0)\,.
 \end{equation}
 \item[(vi)] $\mathscr{H}_{\alpha,a}^{[\gamma]}$ has negative spectrum if and only if $\gamma<0$, in which case its negative spectrum is finite, discrete, and consists of $\mathcal{N}_-\big(\mathscr{H}_{\alpha,a}^{[\gamma]}\big)$ negative eigenvalues, counted with multiplicity, with 
 \begin{equation}\label{eq:multnegspec-IIa}
  \mathcal{N}_-\big(\mathscr{H}_{\alpha,a}^{[\gamma]}\big)\;=\;2\Big\lfloor \frac{1+\alpha}{\,1+|a|^2\,}|\gamma|\Big\rfloor +1\qquad(\gamma<0)\,.
 \end{equation}
  \item[(vii)] $\mathscr{H}_{\alpha}^{[\Gamma]}$ has negative spectrum if and only if $\gamma_1+\gamma_4 - \sqrt{(\gamma_1-\gamma_4)^2 +4 (\gamma_2^2+\gamma_3^2)}<0$, in which case its negative spectrum is finite, discrete, and consists of $\mathcal{N}_-\big(\mathscr{H}_{\alpha}^{[\Gamma]}\big)$ negative eigenvalues, counted with multiplicity, with 
  \begin{equation}\label{eq:multnegspec-IIImathscrHaG}
   \begin{split}
    \mathcal{N}_-\big(\mathscr{H}_{\alpha}^{[\Gamma]}\big)\;&=\;\textstyle 2\lfloor -(1+\alpha)\big(\gamma_1+\gamma_4+\sqrt{(\gamma_1-\gamma_4)^2+4 (\gamma_2^2 + \gamma_3^2)}\,\big)\rfloor \\
    & \quad + \textstyle 2\lfloor -(1+\alpha)\big(\gamma_1+\gamma_4-\sqrt{(\gamma_1-\gamma_4)^2+4 (\gamma_2^2 + \gamma_3^2)}\,\big)\rfloor \\
    & \quad + n_0(\Gamma)
   \end{split}
  \end{equation}
   with
   \begin{equation}
    n_0(\Gamma)\,:=\,\begin{cases}
   \;2 & \textrm{if}\quad\gamma_1 \, \gamma_4>\gamma_2^2 + \gamma_3^2 \quad \text{and} \quad \gamma_1+\gamma_4<0 \\
   \;1 & \textrm{if}\quad \gamma_1 \, \gamma_4<\gamma_2^2 + \gamma_3^2\quad\textrm{or}\quad
   \begin{cases}
    \;\gamma_1 \, \gamma_4 =\gamma_2^2 + \gamma_3^2 \\
    \;\gamma_1+\gamma_4<0
   \end{cases} \\
   \;0 & \textrm{if}\quad \gamma_1 \, \gamma_4\geqslant\gamma_2^2 + \gamma_3^2 \quad \text{and} \quad \gamma_1+\gamma_4>0\,.
  \end{cases}
   \end{equation}
 \end{itemize}
\end{theorem}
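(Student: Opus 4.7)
The plan is to reduce the statement to a direct application of Proposition \ref{prop:spectra-in-direct-sum} combined with the fibrewise information already established in Propositions \ref{prop:SpectrumK0} and \ref{prop:NegativeEigenvaluesKDiffZero}. All five uniformly fibred extensions have the form $\mathscr{H} = \bigoplus_{k\in\mathbb{Z}}\widetilde{A}_\alpha(k)$, where $\widetilde{A}_\alpha(0)$ has essential spectrum $[0,+\infty)$ without embedded eigenvalues, and each $\widetilde{A}_\alpha(k)$ with $k\neq 0$ is purely discrete with lower bound $(1+\alpha)(\frac{2+\alpha}{4})^{\alpha/(1+\alpha)}|k|^{2/(1+\alpha)}$ by \eqref{eq:lowerboundAak}; this single coercivity fact is the engine of the whole argument.

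First I would read off the essential spectrum (i). By \eqref{eq:spessTspessTk} one has $[0,+\infty)=\sigma_{\mathrm{ess}}(\widetilde{A}_\alpha(0))\subset\sigma_{\mathrm{ess}}(\mathscr{H})$. For the reverse inclusion, any point $\lambda<0$ can belong to at most finitely many $\sigma(\widetilde{A}_\alpha(k))$ because of the $|k|$-dependent lower bound, and each such contribution is itself discrete with finite multiplicity; hence $\sigma(\mathscr{H})\cap(-\infty,0)$ is discrete and finite via \eqref{eq:spTspTk}, which gives simultaneously the finiteness statements in (ii), (iv) and the first half of (v)-(vii), together with lower semi-boundedness.

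Next I would deal with the embedded eigenvalues (iii). For each $k\neq 0$ the fibre $\widetilde{A}_\alpha(k)$ has pure-point spectrum accumulating only at $+\infty$, and via \eqref{eq:spTspTk} all its positive eigenvalues are embedded in $\sigma_{\mathrm{ess}}(\mathscr{H})=[0,+\infty)$. Any bounded window in $[0,+\infty)$ intersects only finitely many fibres (once more by the coercive lower bound) and each intersection is finite, ruling out accumulation and upgrading multiplicities to finite values; the absence of $k=0$ contributions to the embedded eigenvalue set is Proposition \ref{prop:SpectrumK0}(i).

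Finally I would quantify the negative eigenvalue counts (v)-(vii) by counting, for each family, the set of $k\in\mathbb{Z}\setminus\{0\}$ producing one (respectively two) negative bound states, as dictated by Proposition \ref{prop:NegativeEigenvaluesKDiffZero}, and then adding the $k=0$ contribution from Proposition \ref{prop:SpectrumK0}. For families $\mathrm{I_R}$, $\mathrm{I_L}$ and $\mathrm{II}_a$ the threshold condition is $|k|$ strictly less than a quantity $\theta(\gamma,a,\alpha)$; since the paper's custom $\lfloor\cdot\rfloor$ satisfies $\#\{k\in\mathbb{N}\,|\,k<x\} = \lfloor x\rfloor$ (notably at integer $x$, where $\lfloor x\rfloor = x-1$), doubling for $\pm k$ and adding the single eigenvalue at $k=0$ recovers \eqref{eq:multnegspec-IRIL} and \eqref{eq:multnegspec-IIa}. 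For family $\mathrm{III}$ the same bookkeeping, now applied to the two thresholds $-(1+\alpha)(\gamma_1+\gamma_4\pm\sqrt{(\gamma_1-\gamma_4)^2+4(\gamma_2^2+\gamma_3^2)})$ of Proposition \ref{prop:NegativeEigenvaluesKDiffZero}(iv), produces the two floor-terms of \eqref{eq:multnegspec-IIImathscrHaG}, with the case analysis of Proposition \ref{prop:SpectrumK0}(v) supplying $n_0(\Gamma)$.

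The main obstacle I foresee is not an analytic one but purely combinatorial: the careful case-by-case matching between the strict/non-strict inequalities in Proposition \ref{prop:NegativeEigenvaluesKDiffZero} and the non-standard convention $\lfloor m\rfloor = m-1$ at integer $m$, so that integer-threshold modes (at which the Birman parameter develops a zero eigenvalue) do \emph{not} spuriously contribute a strictly negative bound state. Once this is verified, the formulas (v)-(vii) follow by inspection from the fibre-by-fibre tallying.
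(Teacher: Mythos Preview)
Your overall strategy matches the paper's proof, and your handling of the negative-eigenvalue counting (including the care with the non-standard $\lfloor\cdot\rfloor$) is correct and essentially identical to what the paper does.

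There is, however, one genuine gap. In your opening paragraph you assert that each $\widetilde{A}_\alpha(k)$ with $k\neq 0$ has lower bound $(1+\alpha)(\frac{2+\alpha}{4})^{\alpha/(1+\alpha)}|k|^{2/(1+\alpha)}$ ``by \eqref{eq:lowerboundAak}'', and you then use this ``coercivity'' repeatedly as the engine of the argument. But \eqref{eq:lowerboundAak} is a lower bound for the \emph{minimal} operator $A_\alpha(k)$, not for its self-adjoint extensions $\widetilde{A}_\alpha(k)$. Only the Friedrichs extension preserves this bound; the other extensions can (and for small $|k|$ do) have strictly smaller bottom, including negative eigenvalues. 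For parts (i), (ii) and the counting in (v)--(vii) this is harmless, because Proposition~\ref{prop:NegativeEigenvaluesKDiffZero} already tells you directly that only finitely many fibres carry negative spectrum, and you invoke that proposition anyway. But for part (iii) the gap is essential: to conclude that a bounded window in $[0,+\infty)$ meets only finitely many fibre spectra you need $\mathfrak{m}\big(\widetilde{A}_\alpha(k)\big)\to+\infty$ as $|k|\to\infty$, and \eqref{eq:lowerboundAak} alone does not give that for the non-Friedrichs extensions.

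The paper closes this gap with an extra ingredient from the Kre\u{\i}n--Vi\v{s}ik--Birman scheme: the estimate
\[
\mathfrak{m}\big(\widetilde{A}_\alpha(k)\big)\;\geqslant\;\frac{\mathfrak{m}\big(A_\alpha(k)\big)\cdot\mathfrak{m}(\mathcal{B}_k)}{\mathfrak{m}\big(A_\alpha(k)\big)+\mathfrak{m}(\mathcal{B}_k)}\,,
\]
where $\mathcal{B}_k$ is the Birman parameter \eqref{eq:GammaIRIL}--\eqref{eq:IIkTheorem51}. Since both $\mathfrak{m}\big(A_\alpha(k)\big)\sim|k|^{2/(1+\alpha)}$ and $\mathfrak{m}(\mathcal{B}_k)\to+\infty$ as $|k|\to\infty$, the harmonic-mean-type right-hand side diverges as well, which is exactly the coercivity you need. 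Once you insert this step, your argument for (iii) goes through.
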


\begin{corollary}\label{cor:positivity}
 The operator $\mathscr{H}_{\alpha,\mathrm{F}}$ is non-negative. The operators $\mathscr{H}_{\alpha,\mathrm{R}}^{[\gamma]}$, $\mathscr{H}_{\alpha,\mathrm{L}}^{[\gamma]}$, and $\mathscr{H}_{\alpha,a}^{[\gamma]}$ are non-negative if and only if $\gamma\geqslant 0$. The operator $\mathscr{H}_{\alpha}^{[\Gamma]}$ is non-negative if and only if 
 $\gamma_1\gamma_4\geqslant\gamma_2^2 + \gamma_3^2$ and $\gamma_1+\gamma_4>0$.
\end{corollary}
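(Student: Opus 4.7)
\medskip

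\noindent\textbf{Proof proposal.} The plan is to derive the corollary as an immediate consequence of Theorem~\ref{thm:fibredHalpha-spectrum}, which has done all the real work. A self-adjoint operator $S$ on a Hilbert space is non-negative if and only if $\sigma(S)\subset [0,+\infty)$. By part (i) of Theorem \ref{thm:fibredHalpha-spectrum}, for every one of the extensions \eqref{eq:HalphaFriedrichs_unif-fibred}-\eqref{eq:Halpha-III_unif-fibred} the essential spectrum equals exactly $[0,+\infty)$; by part (iii) the embedded eigenvalues lie inside this half-line too. Hence the only possible spectral mass below zero consists of the finitely many strictly negative discrete eigenvalues counted by $\mathcal{N}_-$, and non-negativity is equivalent to the condition $\mathcal{N}_-=0$. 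The whole proof therefore reduces to reading off parts (iv)-(vii) of that theorem.

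\smallskip

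Cases (iv), (v), (vi) are immediate. For the Friedrichs extension $\mathcal{N}_-\big(\mathscr{H}_{\alpha,\mathrm{F}}\big)=0$ unconditionally, so $\mathscr{H}_{\alpha,\mathrm{F}}\geqslant 0$ for every admissible $\alpha$. For the families $\mathrm{I}_\mathrm{R}$, $\mathrm{I}_\mathrm{L}$, $\mathrm{II}_a$ the formulas \eqref{eq:multnegspec-IRIL}, \eqref{eq:multnegspec-IIa} give $\mathcal{N}_-\geqslant 1$ precisely when $\gamma<0$, so in each of these three sub-families non-negativity is equivalent to $\gamma\geqslant 0$.

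\smallskip

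The only case demanding a short algebraic verification is the type-$\mathrm{III}$ one. Since the three summands in \eqref{eq:multnegspec-IIImathscrHaG} are non-negative integers, $\mathcal{N}_-\big(\mathscr{H}_\alpha^{[\Gamma]}\big)=0$ is equivalent to each of them vanishing. From the piecewise definition of $n_0$, one has $n_0(\Gamma)=0$ if and only if $\gamma_1\gamma_4\geqslant\gamma_2^2+\gamma_3^2$ and $\gamma_1+\gamma_4>0$. I would then check the elementary algebraic equivalence
\[
\gamma_1+\gamma_4\;\geqslant\;\sqrt{(\gamma_1-\gamma_4)^2+4(\gamma_2^2+\gamma_3^2)}\;\Longleftrightarrow\;\big(\gamma_1+\gamma_4\geqslant 0\textrm{ and }\gamma_1\gamma_4\geqslant\gamma_2^2+\gamma_3^2\big)
\]
(the $\Rightarrow$ direction is obtained by squaring both sides, the $\Leftarrow$ by noting that the square of the right-hand side is $(\gamma_1+\gamma_4)^2-4(\gamma_1\gamma_4-\gamma_2^2-\gamma_3^2)\leqslant (\gamma_1+\gamma_4)^2$). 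Under the $n_0(\Gamma)=0$ conditions this equivalence forces $\gamma_1+\gamma_4\pm\sqrt{(\gamma_1-\gamma_4)^2+4(\gamma_2^2+\gamma_3^2)}\geqslant 0$, so the arguments of the two floor terms in \eqref{eq:multnegspec-IIImathscrHaG} are $\leqslant 0$, and the floor convention then yields both floors equal to $0$. Conversely, if $\gamma_1\gamma_4<\gamma_2^2+\gamma_3^2$ or $\gamma_1+\gamma_4\leqslant 0$, the definition of $n_0$ gives $n_0(\Gamma)\geqslant 1$, hence $\mathcal{N}_-\geqslant 1$ and the extension is not non-negative. This closes the equivalence.

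\smallskip

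There is essentially no genuine obstacle: all the spectral input needed has been gathered in Theorem~\ref{thm:fibredHalpha-spectrum}, and the only computation is the standard characterisation of positive semi-definiteness of a $2\times 2$ Hermitian matrix through trace and determinant, which exactly matches the $n_0(\Gamma)=0$ branch in the piecewise definition. The only mildly delicate point is the borderline configuration $\gamma_1+\gamma_4=0$ together with $\gamma_1\gamma_4\geqslant\gamma_2^2+\gamma_3^2\geqslant 0$, which necessarily forces $\Gamma=0$ and which the strict inequality $\gamma_1+\gamma_4>0$ in the statement tacitly excludes; as it concerns a single extension, this degenerate instance can be handled separately by inspection if desired.
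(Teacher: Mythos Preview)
Your proposal is correct and follows essentially the same route as the paper: the corollary is stated without a separate proof and is meant to be read off directly from Theorem~\ref{thm:fibredHalpha-spectrum}, with the type-$\mathrm{III}$ non-negativity characterisation (the trace/determinant condition on $\widetilde{\Gamma}$) already established verbatim at the end of the proof of part (vii). Your observation about the degenerate borderline $\Gamma=0$ is apt and matches a tacit exclusion in the paper's own case split for $n_0(\Gamma)$.
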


\begin{proof}[Proof of Theorem \ref{thm:fibredHalpha-spectrum}]
 Let us write $\widetilde{\mathscr{H}}_\alpha=\bigoplus_{k\in\mathbb{Z}}\widetilde{A}_\alpha(k)$ for any of the operators \eqref{eq:HalphaFriedrichs_unif-fibred}-\eqref{eq:Halpha-III_unif-fibred}.

 Obviously, $\lambda\in\mathbb{R}$ is an eigenvalue of $\widetilde{\mathscr{H}}_\alpha$, and hence $\widetilde{\mathscr{H}}_\alpha \psi=\lambda \psi$ for some non-zero $\psi\equiv(\psi_k)_{k\in\mathbb{Z}}\in\cH$ (the fibred Hilbert space \eqref{eq:Hxispace}), if and only if $\widetilde{A}_\alpha(k) \psi_k=\lambda \psi_k$ for all the $k$'s in $\mathbb{Z}$ for which $\psi_k$ is non-zero in the fibre Hilbert space $\mathfrak{h}$.

 Let us focus first on the negative eigenvalues $\lambda$ of $\widetilde{\mathscr{H}}_\alpha$. They necessarily come as negative eigenvalues of some of the $\widetilde{A}_\alpha(k)$'s. Owing to Propositions \ref{prop:SpectrumK0} and \ref{prop:NegativeEigenvaluesKDiffZero}, each $\widetilde{A}_\alpha(k)$ contributes with a finite number of negative eigenvalues (counting multiplicity). Moreover, and most importantly, only a finite number of $\widetilde{A}_\alpha(k)$'s have negative eigenvalues.

 Let us demonstrate this claim first for $\widetilde{\mathscr{H}}_\alpha\equiv\mathscr{H}_{\alpha,\mathrm{R}}^{[\gamma]}$ and postpone the same control of the other extensions to the second part of the proof. Thus, when $\widetilde{A}_\alpha(k)\equiv A_{\alpha,\mathrm{R}}^{[\gamma]}(k)$ and $k\in\mathbb{Z}\setminus\{0\}$, Proposition \ref{prop:NegativeEigenvaluesKDiffZero}(ii) states that this operator admits exactly one negative, non-degenerate eigenvalue if and only if $|k|<-(1+\alpha)\gamma$. This is never the case when $\gamma\geqslant 0$, whereas when $\gamma<0$ the number of $k$-modes with negative eigenvalue is precisely $2\lfloor (1+\alpha)|\gamma|\rfloor +1$, having now added to the counting also the negative eigenvalue of $A_{\alpha,\mathrm{R}}^{[\gamma]}(0)$ when $\gamma<0$ (Proposition \ref{prop:SpectrumK0}(iii)). Therefore, $\mathscr{H}_{\alpha,\mathrm{R}}^{[\gamma]}$ with $\gamma<0$ has exactly $2\lfloor (1+\alpha)|\gamma|\rfloor +1$ negative eigenvalues, counted with multiplicity, namely a \emph{finite} number of them. As said, we defer to the second part of the proof the analogous quantification for the other types of extensions.

 The conclusion so far is that $\widetilde{\mathscr{H}}_\alpha$ has only a finite number of negative eigenvalues, counted with multiplicity, which therefore all belong to $\sigma_{\mathrm{disc}}\big(\widetilde{\mathscr{H}}_\alpha\big)$. All other eigenvalues of $\widetilde{\mathscr{H}}_\alpha$ are non-negative, hence embedded in
 \[
  [0,+\infty)\;=\;\sigma_{\mathrm{ess}}\big(\widetilde{A}_\alpha(0)\big)\;=\;\bigcup_{k\in\mathbb{Z}}\sigma_{\mathrm{ess}}\big(\widetilde{A}_\alpha(k)\big)\;\subset\;\sigma_{\mathrm{ess}}\big(\widetilde{\mathscr{H}}_\alpha\big)
 \]
 (having used \eqref{eq:sess0} from Proposition \ref{prop:SpectrumK0} for the first identity, Proposition \ref{prop:NegativeEigenvaluesKDiffZero}(i) for the second, and \eqref{eq:spessTspessTk} from Proposition \ref{prop:spectra-in-direct-sum} for the third), and therefore do not belong to $\sigma_{\mathrm{disc}}\big(\widetilde{\mathscr{H}}_\alpha\big)$. The latter set is therefore only negative and finite. This proves part (ii).

 As argued above, $[0,+\infty)\subset\sigma_{\mathrm{ess}}\big(\widetilde{\mathscr{H}}_\alpha\big)$. Should there be $\lambda<0$ in $\sigma_{\mathrm{ess}}\big(\widetilde{\mathscr{H}}_\alpha\big)$, such $\lambda$ would either be an eigenvalue of infinite multiplicity or an accumulation point of $\sigma\big(\widetilde{\mathscr{H}}_\alpha\big)$. The first option is excluded, because as observed already negative eigenvalues of $\widetilde{\mathscr{H}}_\alpha$ can only be negative eigenvalues of a finite number of the $\widetilde{A}_\alpha(k)$'s, hence all with finite multiplicity. The second option is excluded as well, owing to \eqref{eq:spTspTk} of Proposition \ref{prop:spectra-in-direct-sum}, that implies that a negative accumulation point of $\sigma\big(\widetilde{\mathscr{H}}_\alpha\big)$ must be the limit of negative points in the $\sigma\big(\widetilde{A}_\alpha(k)\big)$'s, whereas the negative spectra of the $\widetilde{A}_\alpha(k)$'s are actually non-empty only for finitely many $k$'s, and each such negative spectrum is finite. Then, necessarily $\sigma_{\mathrm{ess}}\big(\widetilde{\mathscr{H}}_\alpha\big)=[0,+\infty)$. This proves part (i).

 Concerning the non-$\sigma_{\mathrm{disc}}\big(\widetilde{\mathscr{H}}_\alpha\big)$ eigenvalues embedded in $[0,+\infty)$, once again each of them must be an eigenvalue for some of the $\widetilde{A}_\alpha(k)$'s. As each $\widetilde{A}_\alpha(k)$ with $k\neq 0$ has infinitely many positive eigenvalues (Proposition \ref{prop:NegativeEigenvaluesKDiffZero}(i)), the number of embedded eigenvalues for $\widetilde{\mathscr{H}}_\alpha$, counting multiplicity, is (countably) infinite as well. Now, the only possibility for any such eigenvalue $\lambda\geqslant 0$ to have itself infinite multiplicity is that $\lambda$ is \emph{simultaneously} an eigenvalue for infinitely many distinct $\widetilde{A}_\alpha(k)$. This cannot be the case, though. Indeed, each $\widetilde{A}_\alpha(k)$ is lower semi-bounded and the Kre\u{\i}n-Vi\v{s}ik-Birman extension scheme provides the following estimate \emph{from below} of the bottom $\mathfrak{m}\big(\widetilde{A}_\alpha(k)\big)$ of its spectrum \cite[Theorem 6]{GMO-KVB2017}: 
 \[
  \mathfrak{m}\big(\widetilde{A}_\alpha(k)\big)\;\geqslant\;\frac{\,\mathfrak{m}\big(A_\alpha(k)\big)\cdot \mathfrak{m}(\mathcal{B}_k)}{\,\mathfrak{m}\big(A_\alpha(k)\big)+\mathfrak{m}(\mathcal{B}_k)\,}\,.
 \]
 In the last formula $\mathcal{B}_k$ indicates the Birman extension parameter of $\widetilde{A}_\alpha(k)$, namely one of the operators \eqref{eq:GammaIRIL}-\eqref{eq:IIkTheorem51}, $\mathfrak{m}(\mathcal{B}_k)$ denotes the bottom of its spectrum, and $\mathfrak{m}\big(A_\alpha(k)\big)$ is the lower bound \eqref{eq:lowerboundAak}. For sufficiently large $|k|$,
 \[
  \mathfrak{m}\big(A_\alpha(k)\big)\,\sim\,|k|^{\frac{2}{1+\alpha}}\,,\qquad \mathfrak{m}(\mathcal{B}_k)\,\sim\,|k|^{\frac{1-\alpha}{1+\alpha}}\,,
 \]
 whence $ \mathfrak{m}\big(\widetilde{A}_\alpha(k)\big)\gtrsim\,|k|^{\frac{1-\alpha}{1+\alpha}}$. This shows that the lowest eigenvalue of $\widetilde{A}_\alpha(k)$ grows with $|k|$, thus making impossible for a fixed $\lambda\geqslant 0$ to be simultaneous eigenvalue of infinitely many $\widetilde{A}_\alpha(k)$'s. The conclusion is that $\lambda$ has finite multiplicity. The same reasoning excludes that the embedded eigenvalues accumulate to some limit points. This completes the proof of part (iii).

 What remains to prove now is the precise quantification of the multiplicity of the finite negative discrete spectrum of the operator $\widetilde{\mathscr{H}}_\alpha$ for each possible type $\mathscr{H}_{\alpha,\mathrm{F}},\mathscr{H}_{\alpha,\mathrm{R}}^{[\gamma]},\mathscr{H}_{\alpha,\mathrm{L}}^{[\gamma]},\mathscr{H}_{\alpha,a}^{[\gamma]},\mathscr{H}_{\alpha}^{[\Gamma]}$, mimicking the same reasoning made above for $\mathscr{H}_{\alpha,\mathrm{R}}^{[\gamma]}$. Clearly $\mathscr{H}_{\alpha,\mathrm{F}}$ has no negative spectrum because the $A_{\alpha,\mathrm{F}}(k)$'s have neither (Propositions \ref{prop:SpectrumK0}(ii) and \ref{prop:NegativeEigenvaluesKDiffZero}(i)). And the multiplicity \eqref{eq:multnegspec-IRIL} of the negative spectrum of $\mathscr{H}_{\alpha,\mathrm{R}}^{[\gamma]}$ has been already computed above -- the same clearly applies to $\mathscr{H}_{\alpha,\mathrm{L}}^{[\gamma]}$. Thus, parts (iv) and (v) are proved.

 Concerning $\mathscr{H}_{\alpha,a}^{[\gamma]}$, Proposition \ref{prop:NegativeEigenvaluesKDiffZero}(iii) states that $A_{\alpha,a}^{[\gamma]}$ with $k\in\mathbb{Z}\setminus\{0\}$ admits exactly one negative, non-degenerate eigenvalue if and only if $|k|<-\frac{1+\alpha}{1+|a|^2}\gamma$. This is never the case when $\gamma\geqslant 0$, whereas when $\gamma<0$ the number of $k$-modes with negative eigenvalue is precisely $2\lfloor \frac{1+\alpha}{\,1+|a|^2\,}|\gamma|\rfloor +1$, having now added to the counting also the negative eigenvalue of $A_{\alpha,a}^{[\gamma]}(0)$ when $\gamma<0$ (Proposition \ref{prop:SpectrumK0}(iv)). Therefore, $\mathscr{H}_{\alpha,a}^{[\gamma]}$ with $\gamma<0$ has exactly $2\lfloor \frac{1+\alpha}{\,1+|a|^2\,}|\gamma|\rfloor +1$ negative eigenvalues, counted with multiplicity. Part (vi) is proved.

 Last, concerning $\mathscr{H}_{\alpha}^{[\Gamma]}$, the counting goes as follows. In the non-zero modes, the $A_{\alpha}^{[\Gamma]}(k)$'s with \emph{exactly two} negative eigenvalues are those for which (see \eqref{eq:negEV-k_extIII_2ev} from Proposition \ref{prop:NegativeEigenvaluesKDiffZero})
 \begin{equation*}
 |k|\;<\;-\textstyle(1+\alpha)\big(\gamma_1+\gamma_4+\sqrt{(\gamma_1-\gamma_4)^2+4 (\gamma_2^2 + \gamma_3^2)}\,\big)\,,
 \end{equation*}
 therefore their number amounts to
 \[
  N_2\;:=\;\textstyle 2\lfloor -(1+\alpha)\big(\gamma_1+\gamma_4+\sqrt{(\gamma_1-\gamma_4)^2+4 (\gamma_2^2 + \gamma_3^2)}\,\big)\rfloor\,.
 \]
 The $A_{\alpha}^{[\Gamma]}(k)$'s instead with \emph{only one} negative eigenvalue are those for which (see \eqref{eq:negEV-k_extIII_1ev} above)
 \begin{equation*}
 \begin{split}
  & -\textstyle(1+\alpha)\big(\gamma_1+\gamma_4+\sqrt{(\gamma_1-\gamma_4)^2+4 (\gamma_2^2 + \gamma_3^2)}\,\big)\;\leqslant \\
  & \qquad\qquad \leqslant\; |k|\;<\;-\textstyle(1+\alpha)\big(\gamma_1+\gamma_4-\sqrt{(\gamma_1-\gamma_4)^2+4 (\gamma_2^2 + \gamma_3^2)}\,\big)\,,
 \end{split}
 \end{equation*}
 therefore their number amounts to
 \[
  \begin{split}
     & N_1\;:=\;\textstyle 2\lfloor -(1+\alpha)\big(\gamma_1+\gamma_4-\sqrt{(\gamma_1-\gamma_4)^2+4 (\gamma_2^2 + \gamma_3^2)}\,\big)\rfloor \\
     &\qquad \qquad \textstyle -2\lfloor -(1+\alpha)\big(\gamma_1+\gamma_4+\sqrt{(\gamma_1-\gamma_4)^2+4 (\gamma_2^2 + \gamma_3^2)}\,\big)\rfloor\,.
  \end{split}
 \]
 All other $A_{\alpha}^{[\Gamma]}(k)$'s do not have negative eigenvalues. Thus, the contribution in terms of number of negative eigenvalues of $\mathscr{H}_{\alpha}^{[\Gamma]}$ from its non-zero modes is $2N_2+N_1$, counting multiplicity. To this quantity we have to add the number $n_0$ of negative eigenvalues from the zero-mode, namely from $A_{\alpha}^{[\Gamma]}(0)$: owing to Proposition \ref{prop:SpectrumK0}(v),
 \[
  n_0\;=\;
  \begin{cases}
   \;2 & \textrm{if}\quad\gamma_1 \, \gamma_4>\gamma_2^2 + \gamma_3^2 \quad \text{and} \quad \gamma_1+\gamma_4<0 \\
   \;1 & \textrm{if}\quad \gamma_1 \, \gamma_4<\gamma_2^2 + \gamma_3^2\quad\textrm{or}\quad
   \begin{cases}
    \;\gamma_1 \, \gamma_4 =\gamma_2^2 + \gamma_3^2 \\
    \;\gamma_1+\gamma_4<0
   \end{cases} \\
   \;0 & \textrm{otherwise, i.e., if}\quad  \gamma_1+\gamma_4 - \sqrt{(\gamma_1-\gamma_4)^2 +4 (\gamma_2^2+\gamma_3^2)} \;\geqslant \;0\,.
  \end{cases}
 \]
 The conclusion is that the number $\mathcal{N}_-\big(\mathscr{H}_{\alpha}^{[\Gamma]}\big)$ of negative eigenvalues of $\mathscr{H}_{\alpha}^{[\Gamma]}$, counting multiplicity, is precisely $2N_2+N_1+n_0$, which yields the expression \eqref{eq:multnegspec-IIImathscrHaG}. Moreover, \emph{none} of the fibre operators $A_{\alpha}^{[\Gamma]}(k)$, $k\in\mathbb{Z}$, admits negative eigenvalues if and only if (see \eqref{eq:III0-none-neg} and \eqref{eq:negEV-k_extIII_0ev} above)
 \[
  \begin{split}
   &\gamma_1+\gamma_4 - \textstyle\sqrt{(\gamma_1-\gamma_4)^2 +4 (\gamma_2^2+\gamma_3^2)} \;\geqslant \;0 \quad\textrm{and}\\
   &|k|\;\geqslant\;-\textstyle(1+\alpha)\big(\gamma_1+\gamma_4-\sqrt{(\gamma_1-\gamma_4)^2+4 (\gamma_2^2 + \gamma_3^2)}\,\big)\quad \forall k\in\mathbb{Z}\setminus\{0\}\,,
  \end{split}
 \]
 which is equivalent to just $\gamma_1+\gamma_4 - \sqrt{(\gamma_1-\gamma_4)^2 +4 (\gamma_2^2+\gamma_3^2)}\geqslant0$. The latter condition characterises the absence of negative spectrum for $\mathscr{H}_{\alpha,a}^{[\gamma]}$, can be also interpreted as the non-negativity of the lowest eigenvalue of the hermitian matrix
 \[
  \widetilde{\Gamma}\;:=\;
  \begin{pmatrix}
   \gamma_1 & \gamma_2+\ii\gamma_3 \\
   \gamma_2-\ii\gamma_3 & \gamma_4
  \end{pmatrix},
 \]
 and is therefore equivalent to $\mathrm{Tr}(\widetilde{\Gamma})>0$ and $\det(\widetilde{\Gamma})\geqslant 0$, i.e., to $\gamma_1+\gamma_4>0$ and $\gamma_1\gamma_4\geqslant\gamma_2^2 + \gamma_3^2$. Part (vii) is thus proved.
\end{proof}

\subsection{Inverse unitary transformations}\label{sec:inverseunitarytransf}~

By inverting the unitary transformation of \eqref{eq:unitary_transf_pm}, thus exploiting the identities \eqref{eq:inverseUniary_Fri}-\eqref{eq:inverseUniary_III}, we are now able to translate the information obtained so far on the uniformly fibred self-adjoint extensions of $\mathscr{H}_\alpha$ to the corresponding self-adjoint extensions of $H_\alpha$.

\begin{proof}[Proof of Theorem \ref{thm:positivity}]
 The non-negativity is preserved between the pairs of unitarily equivalent operators in  \eqref{eq:inverseUniary_Fri}-\eqref{eq:inverseUniary_III}, therefore the theorem follows at once from Corollary \ref{cor:positivity}.
\end{proof}

\begin{proof}[Proof of Theorem \ref{thm:MainSpectral}]
 Discrete and essential spectrum are preserved between the pairs of unitarily equivalent operators in  \eqref{eq:inverseUniary_Fri}-\eqref{eq:inverseUniary_III}, as well as the multiplicity and degeneracy of eigenvalues, therefore the theorem follows at once from Theorem \ref{thm:fibredHalpha-spectrum}.
\end{proof}

\begin{proof}[Proof of Theorem \ref{thm:GroundStateCH}]
 Consider the regime when an operator of the type \eqref{eq:HalphaR_unif-fibred}-\eqref{eq:Halpha-III_unif-fibred}, collectively denoted as $\widetilde{\mathscr{H}}_\alpha=\bigoplus_{k\in\mathbb{Z}}\widetilde{A}_\alpha(k)$, has negative spectrum (Theorem \ref{thm:fibredHalpha-spectrum}). Build $\psi\equiv(\psi_k)_{k\in\mathbb{Z}}\in\cH$ whose components are all zero but for $\psi_0$, taken to be the eigenfunction in $\mathfrak{h}$ relative to the zero-mode lowest negative eigenvalue $E_0( \widetilde{A}_\alpha(0))$ (Proposition \ref{prop:SpectrumK0}). Then $\widetilde{\mathscr{H}}_\alpha\,\psi=E_0( \widetilde{A}_\alpha(0))\psi$. Moreover, as a consequence of the strict ordering established in Lemma \ref{lem:OrderRelationOps}, $E_0( \widetilde{A}_\alpha(0))$ is the ground state energy of $\widetilde{\mathscr{H}}_\alpha$ with ground state vector $\psi$. The degeneracy of $E_0( \widetilde{A}_\alpha(0))$ as lowest eigenvalue of $\widetilde{\mathscr{H}}_\alpha$ is the same as the degeneracy as lowest eigenvalue of $\widetilde{A}_\alpha(0)$. With this reasoning and Proposition \ref{prop:SpectrumK0} one characterises the ground state of $\widetilde{\mathscr{H}}_\alpha$ for each of the types  \eqref{eq:HalphaR_unif-fibred}-\eqref{eq:Halpha-III_unif-fibred}. Let now $\widetilde{H}_\alpha$ be the unitarily equivalent counterpart of $\widetilde{\mathscr{H}}_\alpha$ through the transformations in \eqref{eq:inverseUniary_IR}-\eqref{eq:inverseUniary_III}, hence one of the self-adjoint operators acting on $L^2(M,\mu_\alpha)$ and classified in Theorem \ref{thm:H_alpha_fibred_extensions} -- apart from the Friedrichs extension that clearly has no negative spectrum. The lowest negative eigenvalue and its degeneracy are preserved by unitary equivalence, hence can be immediately read out from Proposition \ref{prop:SpectrumK0}, through the above reasoning. The corresponding eigenfunction is transformed from $\cH$ to $L^2(M,\ud\mu_\alpha)$ by inverting \eqref{eq:global-unitary-pm}. Thus, in view of \eqref{eq:defF2} $(\mathcal{F}_2^{-1}\psi)(x,y)= \frac{1}{\sqrt{2\pi}}\,\psi_0(x)$, and in view of \eqref{eq:unit1} $(U_\alpha^{-1}\mathcal{F}_2^{-1}\psi)(x,y)= \frac{1}{\sqrt{2\pi}}\,|x|^{\frac{\alpha}{2}}\,\psi_0(x)$. As the original $\psi$ has only zero-mode, the transformed ground state function $U_\alpha^{-1}\mathcal{F}_2^{-1}\psi$ is \emph{constant} in $y$. Applying such scheme to the explicit eigenfunctions $g_{\alpha}^{(\mathrm{I_R})}$, $g_{\alpha}^{(\mathrm{I_L})}$, $g_{\alpha}^{(\mathrm{II}_a)}$, $g_{\alpha}^{(\mathrm{III})}$, $g_{\alpha,+}^{(\mathrm{III})}$, $g_{\alpha,-}^{(\mathrm{III})}$ identified in Proposition \ref{prop:SpectrumK0} yields at once the corresponding ground state functions for $H_{\alpha,\mathrm{R}}^{[\gamma]}$, $H_{\alpha,\mathrm{L}}^{[\gamma]}$, $H_{\alpha,a}^{[\gamma]}$, $H_{\alpha}^{[\Gamma]}$. 
\end{proof}

 \begin{proof}[Proof of Proposition \ref{prop:Friedrichs-groundstate}]
 Clearly, due to unitary equivalence, it suffices to reason in terms of $\mathscr{H}_{\alpha,\mathrm{F}}$. As argued already for the proof of Theorem \ref{thm:fibredHalpha-spectrum}, the lowest eigenvalue of $\mathscr{H}_{\alpha,\mathrm{F}}$ must be an eigenvalue for some of the fibre operators $A_{\alpha,\mathrm{F}}(k)$. $A_{\alpha,\mathrm{F}}(0)$ has no eigenvalues (Proposition \ref{prop:SpectrumK0}(ii)). The $A_{\alpha,\mathrm{F}}(k)$'s with $k\neq 0$ have indeed purely discrete positive spectrum (Proposition \ref{prop:NegativeEigenvaluesKDiffZero}(i)), and the lowest contribution to the eigenvalues of $\mathscr{H}_{\alpha,\mathrm{F}}$ only comes from the lowest, non-degenerate eigenvalue of $A_{\alpha,\mathrm{F}}(1)$ and the lowest, non-degenerate eigenvalue of $A_{\alpha,\mathrm{F}}(-1)$, owing to the operator ordering established in Lemma \ref{lem:OrderRelationOps}. Such eigenvalues are equal (because the two fibre operators are unitarily equivalent through the left$\leftrightarrow$right symmetry), but of course the two eigenfunctions in the fibre correspond to two linearly independent eigenfunctions for $\mathscr{H}_{\alpha,\mathrm{F}}$. Therefore $E_0(H_{\alpha,\mathrm{F}})$ is two-fold degenerate. Next, as the Friedrichs extension preserves the lower bound, the bottom $\mathfrak{m}(A_{\alpha,\mathrm{F}}(1))$ of the spectrum of $A_{\alpha,\mathrm{F}}(1)$ satisfies the same estimate \eqref{eq:lowerboundAak} namely
 \begin{equation*}
 \mathfrak{m}(A_{\alpha,\mathrm{F}}(1))\;\geqslant\;(1+\alpha)\big(\textstyle{\frac{2+\alpha}{4}}\big)^{\frac{\alpha}{1+\alpha}}\,.
\end{equation*}
 Then the inequality $E_0(H_{\alpha,\mathrm{F}})\geqslant \mathfrak{m}(A_{\alpha,\mathrm{F}}(1))$ yields the lower bound in \eqref{eq:estimate-Fgroundstate}. For the upper bound we perform a variational argument. Let us consider the family $(h_b)_{b>0}$ of trial functions in $\mathfrak{h}\cong L^2(\mathbb{R})$ defined by
 \[
  h_b(x)\;:=\;|x|^{1+\frac{\alpha}{2}}e^{-b|x|^{1+\alpha}}\,\mathbf{1}_{\mathbb{R}^+}(x)\,,
 \]
 where $\mathbf{1}_{\mathbb{R}^+}$ is the characteristic function of $(0,+\infty)$. Owing to \eqref{eq:extAak-F}, $h_b\in\mathcal{D}(A_{\alpha,\mathrm{F}}(1))$ $\forall b>0$. A direct computation yields
 \[
  \frac{\;\langle h_b, A_{\alpha,\mathrm{F}}(1) h_b \rangle_{L^2(\mathbb{R})}}{\langle h_b,h_b \rangle_{L^2(\mathbb{R})}}\,=\,\frac{2^{\frac{1-\alpha}{1+\alpha}}}{ \Gamma (\frac{3+\alpha}{1+\alpha})}\frac{1+b^2(1+\alpha)^2}{b^{\frac{2 \alpha}{1+\alpha}}}\,,
 \]
 whence
 \[
 \begin{split}
   E_0(H_{\alpha,\mathrm{F}})\;&\leqslant\;\min_{b>0}\,\frac{\;\langle h_b, A_{\alpha,\mathrm{F}}(1) h_b \rangle_{L^2(\mathbb{R})}}{\langle h_b,h_b \rangle_{L^2(\mathbb{R})}}\;=\;\frac{2^{\frac{1-\alpha}{1+\alpha}} (1+\alpha)^{\frac{1+3\alpha}{1+\alpha}}}{\alpha^{\frac{\alpha}{1+\alpha}} \Gamma(\frac{3+\alpha}{1+\alpha})}\,.
 \end{split}
 \]
 This provides the upper bound in \eqref{eq:estimate-Fgroundstate}. 
 \end{proof}

 \begin{proof}[Proof of Theorem \ref{thm:scattering}]
  We exploit once again the unitary equivalence and study the scattering governed by the Hamiltonian $\mathscr{H}_{\alpha,a}^{[\gamma]}$. Based on the fibred structure \eqref{eq:Halpha-IIa_unif-fibred} of the latter operator, it is clear that the scattering takes place independently in each $k$-channel. And owing to the analysis of Propositions \ref{prop:SpectrumK0} and \ref{prop:NegativeEigenvaluesKDiffZero} we see that the $k=0$ channel is the only meaningful one. In this case the scattering was studied in Proposition \ref{prop:scatteringIIa}. In particular, the free incident flux on the $k=0$ fibre at energy $E>0$ is described by the plane wave $g_E(x)\sim e^{\pm\ii x\sqrt{E}}$ as $|x|\to +\infty$: this corresponds to a scattering state $\psi\equiv(\psi_k)_{k\in\mathbb{Z}}$ for $\mathscr{H}_{\alpha,a}^{[\gamma]}$ where $\psi_0\equiv g_E$ and $\psi_k\equiv 0$ if $k\neq 0$. We thus see, by inverting the transformations \eqref{eq:unit1}-\eqref{eq:defF2}, that the large distance behaviour of the incident flux for the scattering governed by $H_{\alpha,a}^{[\gamma]}$ on the Grushin manifold $M_\alpha$ has precisely the form \eqref{eq:fscatt}. All the claimed properties are invariant under unitary transformations and then follows at once from Proposition \ref{prop:scatteringIIa}.
  \end{proof}

\def\cprime{$'$}

\end{document}